\newtheorem{theorem}{Theorem}[section]
\newtheorem{conjecture}[theorem]{Conjecture}
\newtheorem{corollary}[theorem]{Corollary}
\newtheorem{definition}[theorem]{Definition}
\newtheorem{lemma}[theorem]{Lemma}
\newtheorem{remark}[theorem]{Remark}
\newenvironment{proof}[1][Proof]{\noindent\textbf{#1.} }{\ \rule{0.5em}{0.5em}}
\begin{document}

\title{The action of matrix groups on aspherical manifolds}
\author{Shengkui Ye \\
Xi'an Jiaotong-Liverpool University}
\maketitle

\begin{abstract}
Let $\mathrm{SL}_{n}(\mathbb{Z})$ $(n\geq 3)$ be the special linear group
and $M^{r}$ be a closed aspherical manifold. It is proved that when $r<n,$ a
group action of $\mathrm{SL}_{n}(\mathbb{Z})$ on $M^{r}$ by homeomorphisms
is trivial if and only if the induced group homomorphism $\mathrm{SL}_{n}(%
\mathbb{Z})\rightarrow \mathrm{Out}(\pi _{1}(M))$ is trivial. For (almost)
flat manifolds, we prove a similar result in terms of holonomy groups.
Especially, when $\pi _{1}(M)$ is nilpotent, the group $\mathrm{SL}_{n}(%
\mathbb{Z})$ cannot act nontrivially on $M$ when $r<n.$ This confirms a
conjecture related to Zimmer's program for these manifolds.
\end{abstract}

\section{Introduction}

Let $\mathrm{SL}_{n}(\mathbb{Z})$ be the special linear group over integers.
The linear transformations of $\mathrm{SL}_{n}(\mathbb{Z})$ on the Euclidean
space $\mathbb{R}^{n}$ induces a natural group action on the torus $T^{n}=%
\mathbb{R}^{n}/\mathbb{Z}^{n}.$ Note that $T^{n}$ is an aspherical manifold,
i.e. the universal cover is contractible. It is believed that this action is
minimal in the following sense.

\begin{conjecture}
\label{conj}Any group action of $\mathrm{SL}_{n}(\mathbb{Z})$ $(n\geq 3)$ on
a closed aspherical $r$-manifold $M^{r}$ by homeomorphisms factors a finite
group if $r<n.$
\end{conjecture}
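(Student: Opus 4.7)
The plan is to split the problem into an algebraic rigidity statement on fundamental groups and a topological rigidity statement that lifts algebraic triviality back to the action itself. Given an action $\alpha\colon \mathrm{SL}_{n}(\mathbb{Z})\rightarrow \mathrm{Homeo}(M^{r})$, each $\alpha(g)$ lifts to a self-homeomorphism of the contractible universal cover, which yields a canonical outer representation $\rho \colon \mathrm{SL}_{n}(\mathbb{Z})\rightarrow \mathrm{Out}(\pi _{1}(M))$. First I would establish that $\alpha$ factors through a finite group if and only if $\rho$ does; the forward direction is routine, while the reverse uses obstruction theory for maps into aspherical targets together with the Margulis normal subgroup theorem (every normal subgroup of $\mathrm{SL}_{n}(\mathbb{Z})$, $n\geq 3$, is central or of finite index) to rule out an infinite kernel in $\alpha$ while $\rho$ has finite kernel.

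Next I would attack the algebraic side: show that $\rho$ has finite image whenever $r<n$. The central input is the ``no small representation'' phenomenon for $\mathrm{SL}_{n}(\mathbb{Z})$: any homomorphism from $\mathrm{SL}_{n}(\mathbb{Z})$ to $\mathrm{GL}_{m}(\mathbb{Z})$ with $m<n$ has finite image, which follows either from Margulis super-rigidity or directly from the Steinberg relations among the elementary matrices and the rigidity of the $\mathrm{SL}_{2}$-subgroups they generate. When $\pi _{1}(M)$ is nilpotent of Hirsch length at most $r$, $\mathrm{Out}(\pi _{1}(M))$ is commensurable with an arithmetic subgroup of $\mathrm{GL}_{r}(\mathbb{Z})$ via its action on the abelianization and the higher associated graded pieces of the lower central series, so $\rho$ lands in an arithmetic group of suitably bounded linear dimension and must have finite image. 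The same strategy applies in the (almost) flat case after replacing $\mathrm{Out}(\pi _{1}(M))$ by the holonomy representation, whose natural linear dimension is again $r$.

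The main obstacle, which explains why the statement is phrased only as a conjecture in full generality, is the topological rigidity step for an arbitrary closed aspherical manifold. Even once one knows $\rho$ is trivial on a finite-index subgroup, one must promote this to the statement that each such $\alpha(g)$ is actually the identity (or at least lies in a compact subgroup of $\mathrm{Homeo}(M)$). This is an open problem closely tied to the Borel conjecture and to the connectivity structure of $\mathrm{Homeo}(M)/\mathrm{Homeo}_{0}(M)$. Without additional hypotheses on $M$, such as nilpotent or almost crystallographic $\pi _{1}$ (where one has an explicit infranilmanifold model and can replace homotopy arguments by direct manipulation of the affine structure), I see no way to close this gap, so the plan only completes in those special cases and the general conjecture must await further rigidity input.
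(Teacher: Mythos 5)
You are right that this statement is only a conjecture and that any realistic plan can at best settle special cases (nilpotent $\pi_{1}$, flat/almost flat manifolds); your overall decomposition into an algebraic statement about $\mathrm{Out}(\pi_{1}(M))$ plus a topological lifting step is also the paper's decomposition. But the mechanism you propose for the topological step has a genuine gap. The paper does not prove, and does not need, the equivalence ``$\alpha$ factors through a finite group iff $\rho$ does''; it proves the much more restricted statement that $\alpha$ is \emph{trivial} iff $\rho\colon\mathrm{SL}_{n}(\mathbb{Z})\to\mathrm{Out}(\pi_{1}(M))$ is \emph{trivial}. Your reverse implication, as stated, is unsupported: $\ker\alpha\subseteq\ker\rho$, so knowing $\ker\rho$ has finite index says nothing about $\ker\alpha$, and ``obstruction theory for maps into aspherical targets'' only controls homotopy classes, not the homeomorphisms themselves. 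The actual engine is different: when $\rho$ is trivial, $\mathrm{Im}\,\alpha$ acts trivially on the center $C(\pi_{1}(M))$, the extension $1\to\pi_{1}(M)\to G'\to\mathrm{Im}\,\alpha\to1$ (with $G'\subset\mathrm{Homeo}(\tilde M)$) is classified by $H^{2}(\mathrm{Im}\,\alpha;C(\pi))$, and this group vanishes by van der Kallen's computation of $H_{2}(\mathrm{SL}_{n}(\mathbb{Z}))$ together with the congruence subgroup property and Margulis. The extension therefore splits, $\mathrm{SL}_{n}(\mathbb{Z})$ acts on the contractible universal cover $\tilde M$, and the Bridson--Vogtmann Smith-theory theorem (extended in the paper to Steinberg groups acting on $\mathbb{Z}/2$-acyclic homology manifolds of dimension $\le n-1$) forces that action, hence $\alpha$, to be trivial. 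None of these ingredients --- the splitting via $H^{2}$-vanishing and the Smith-theory rigidity on acyclic manifolds --- appears in your outline, and without them the lifting step does not close even in the special cases.

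Two further mismatches are worth flagging. First, your algebraic step only claims \emph{finite} image for $\rho$, but the topological machinery above requires $\rho$ to be \emph{trivial}; the paper obtains triviality of $\mathrm{Hom}(\mathrm{St}_{n}(\mathbb{Z}),\mathrm{GL}_{k}(\mathbb{Z}))$ for $k\le n-1$ not from superrigidity but from the same Smith-theory theorem applied to the linear action on $\mathbb{R}^{k}$, and then climbs the upper central series of a nilpotent $\pi_{1}(M)$ using perfectness of $\mathrm{St}_{n}(\mathbb{Z})$ and a central-extension lifting lemma. A finite-image conclusion would throw you back onto actions of congruence subgroups, which genuinely can act nontrivially (the paper's example of $\Gamma(n,p)$ rotating $S^{1}$), so the weaker conclusion does not suffice. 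Second, you locate the obstruction to the general conjecture in the Borel conjecture and the structure of $\mathrm{Homeo}(M)/\mathrm{Homeo}_{0}(M)$; in the paper's framework the topological step is complete for all closed aspherical manifolds, and the remaining obstacle is purely algebraic, namely controlling $\mathrm{Hom}(\mathrm{SL}_{n}(\mathbb{Z}),\mathrm{Out}(\pi_{1}(M)))$ for general aspherical $M$.
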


This conjecture is related to Zimmer's program concerning group action of
lattices in Lie groups on manifolds (see the survey articles \cite{fi,zm,sw}
for more details). A relevant conjecture is proposed by Farb and Shalen \cite%
{fb}: any smooth action of a finite-index subgroup of $\mathrm{SL}_{n}(%
\mathbb{Z})$ $(n\geq 3)$, on a compact $r$-manifold factors through a finite
group action if $r<n-1$. Compared with Farb and Shalen's conjecture,
Conjecture \ref{conj} considers topological actions and the condition is
generalized to $r<n,$ but only for aspherical manifolds. When $M=S^{1},$
Conjecture \ref{conj} is already proved by Witte \cite{wi}. Weinberger \cite%
{we} confirms the conjecture when $M$ is a torus. For $C^{1+\beta }$-group
actions of a finite-index subgroup in $\mathrm{SL}_{n}(\mathbb{Z}),$ one of
the results proved by Brown, Rodriguez-Hertz and Wang \cite{brw} confirms
Conjecture \ref{conj} for surfaces when $r<n-1$. For $C^{2}$-group actions
of cocompact lattices, Brown-Fisher-Hurtado \cite{bfh} confirms Conjecture %
\ref{conj} when $r<n-1$. Note that the $C^{0}$-actions could be very
different from smooth actions. It seems very few other cases have been
confirmed (for group actions preserving additional structures, many results
have been obtained, cf. \cite{fi,zm}).

For a group $G,$ denote by $\mathrm{Out}(G)$ the outer automorphism group.
Our first result is the following.

\begin{theorem}
\label{th1}Let $M^{r}$ be an aspherical manifold$.$ A group action of $%
\mathrm{SL}_{n}(\mathbb{Z})$ $(n\geq 3)$ on $M^{r}$ $(r\leq n-1)$ by
homeomorphisms is trivial if and only if the induced group homomorphism $%
\mathrm{SL}_{n}(\mathbb{Z})\rightarrow \mathrm{Out}(\pi _{1}(M))$ is
trivial. In particular, Conjecture \ref{conj} holds if the set of group
homomorphisms 
\begin{equation*}
\mathrm{Hom}(\mathrm{SL}_{n}(\mathbb{Z}),\mathrm{Out}(\pi _{1}(M)))=1.
\end{equation*}
\end{theorem}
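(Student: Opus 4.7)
The ``only if'' direction is immediate: a trivial action induces the trivial homomorphism to any quotient. For the converse, let $\rho\colon\mathrm{SL}_n(\mathbb{Z})\to\mathrm{Homeo}(M)$ be the action, and assume the induced map $\bar\rho\colon\mathrm{SL}_n(\mathbb{Z})\to\mathrm{Out}(\pi_1(M))$ is trivial. Since $M$ is a $K(\pi_1(M),1)$, the free-homotopy class of a self-homeomorphism is determined by the induced outer automorphism, so every $\rho(g)$ is freely homotopic to $\mathrm{id}_M$. The substantive task is to promote this homotopical identity to a genuine equality.

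My plan is to first show that a sufficiently large finite subgroup of $\mathrm{SL}_n(\mathbb{Z})$ acts trivially on $M$, and then propagate triviality via generators. Let $H\cong(\mathbb{Z}/2)^{n-1}\subset\mathrm{SL}_n(\mathbb{Z})$ be the subgroup of diagonal matrices with entries $\pm1$ and determinant $+1$; its $\mathbb{F}_2$-rank $n-1$ is at least $r$. Every $\sigma\in H$ acts on $M$ as an involution homotopic to $\mathrm{id}_M$. Because $\bar\rho|_H$ is trivial, each $\rho(\sigma)$ lifts to a homeomorphism of the universal cover $\widetilde M$ commuting with the deck group $\pi_1(M)$, and the collection of such lifts fits into an extension $1\to\pi_1(M)\to\widetilde H\to H\to 1$ acting on the contractible manifold $\widetilde M$. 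Since $\pi_1(M)$ is torsion-free, one analyzes the torsion of $\widetilde H$ and invokes classical Smith-theoretic rank bounds (Mann, Borel, Conner--Floyd) for elementary abelian $p$-actions on a contractible $r$-manifold; together with $r\leq n-1$ this is intended to force the $H$-action on $M$ to be trivial.

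With $\rho|_H$ trivial, I would propagate triviality using the generation of $\mathrm{SL}_n(\mathbb{Z})$ by elementary matrices for $n\geq 3$ (Suslin/Steinberg). For each $E_{ij}(1)$ one can choose a diagonal involution $D\in H$ with $D_{ii}D_{jj}=-1$ and $\det D=+1$ (possible since $n\geq 3$), so that $DE_{ij}(1)D^{-1}=E_{ij}(-1)=E_{ij}(1)^{-1}$. Applying $\rho$ and using $\rho(D)=\mathrm{id}$ gives $\rho(E_{ij}(1))^2=\mathrm{id}$. Either $\rho(E_{ij}(1))=\mathrm{id}$, or it is a nontrivial involution homotopic to $\mathrm{id}_M$ commuting with $\rho(H)$; the latter enlarges the effective elementary abelian $2$-subgroup of $\mathrm{Homeo}(M)$ and contradicts the same rank bound. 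Hence $\rho(E_{ij}(1))=\mathrm{id}$ for all $i\neq j$, and $\rho$ is trivial. The ``in particular'' clause follows immediately: if $\mathrm{Hom}(\mathrm{SL}_n(\mathbb{Z}),\mathrm{Out}(\pi_1(M)))=1$ then $\bar\rho$ is automatically trivial for every action $\rho$.

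The principal obstacle is the rigidity step for the $H$-action. The translation action of $(\mathbb{Z}/2)^k$ on $T^k$ shows that an elementary abelian group of homeomorphisms homotopic to the identity on an aspherical $k$-manifold need not be trivial, so the needed rigidity cannot come from the finite subgroup in isolation. The additional leverage must come from the fact that the $H$-action is the restriction of a full $\mathrm{SL}_n(\mathbb{Z})$-action, with $H$ conjugated by Weyl-type elements into many other tori, ruling out the nontrivial extensions $\widetilde H$ that are possible in the purely toral setting. A careful analysis of the central extension $\widetilde H$ combined with non-existence of certain extensions of $\mathrm{SL}_n(\mathbb{Z})$ by torsion-free groups will be the technical heart of the argument.
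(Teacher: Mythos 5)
Your proposal correctly handles the easy direction and correctly diagnoses where the difficulty lies, but it does not actually close the argument: the ``technical heart'' you defer to --- forcing triviality of the $(\mathbb{Z}/2)^{n-1}$-action and ruling out the bad extensions $\widetilde H$ --- is precisely the content of the theorem, and your own torus example shows that no Smith-theoretic rank bound applied to $M$ itself can do it. The central extension $1\to C(\pi_1(M))\to\widetilde H\to H\to 1$ of lifts commuting with the deck group is governed by $H^{2}(H;C(\pi_1(M)))$, which is generally nonzero for $H\cong(\mathbb{Z}/2)^{n-1}$ and torsion-free coefficients (e.g.\ $H^{2}((\mathbb{Z}/2)^{k};\mathbb{Z})\neq 0$), so working with the finite subgroup alone cannot succeed. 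The propagation step is also broken: once $\rho|_{H}=\mathrm{id}$, the relation $D E_{ij}(1) D^{-1}=E_{ij}(1)^{-1}$ only gives that $\rho(E_{ij}(1))$ is an involution, and a single involution homotopic to the identity on an aspherical manifold violates no rank bound, so no contradiction is obtained.

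The paper's proof supplies exactly the missing ingredient, and at the level of the whole group rather than a finite subgroup. From the action one gets the extension $1\to\pi_{1}(M)\to G'\to\operatorname{Im}f\to 1$ inside $\mathrm{Homeo}(\widetilde M)$; triviality of the map to $\mathrm{Out}(\pi_{1}(M))$ makes this classified by $H^{2}(\operatorname{Im}f;C(\pi))$ with $C(\pi)$ torsion-free abelian. The key computation (Lemma \ref{cs}) shows $H^{2}(Q;\pi)=0$ for \emph{every} quotient $Q$ of $\mathrm{SL}_{n}(\mathbb{Z})$ and torsion-free abelian $\pi$, using van der Kallen's computation of $H_{2}(\mathrm{SL}_{n}(\mathbb{Z}))$, Dennis--Stein for $H_{2}(\mathrm{SL}_{n}(\mathbb{Z}/k))$, Margulis' normal subgroup theorem, and the congruence subgroup property. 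Hence the extension splits, the full $\mathrm{SL}_{n}(\mathbb{Z})$-action lifts to the contractible manifold $\widetilde M$ of dimension $r\leq n-1$, and the Bridson--Vogtmann theorem (or Theorem \ref{th4} for the Steinberg group, supplemented by a Smith theory/Borel formula argument for $n=3$) forces that lifted action, and therefore $\operatorname{Im}f$, to be trivial. You would need to replace your finite-subgroup strategy with this global splitting-and-lifting argument, or find a substitute for the vanishing of $H^{2}$ of all quotients of $\mathrm{SL}_{n}(\mathbb{Z})$ with torsion-free coefficients.
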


An obvious application is the following.

\begin{corollary}
Any group action of $\mathrm{SL}_{n}(\mathbb{Z})$ $(n\geq 3)$ on an
aspherical manifold $M^{k}$ $(k\leq n-1)$ by homotopic-identity
homeomorphisms is trivial.
\end{corollary}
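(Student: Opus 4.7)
The plan is to reduce the corollary directly to Theorem \ref{th1}. It suffices to verify that the induced homomorphism $\mathrm{SL}_n(\mathbb{Z}) \to \mathrm{Out}(\pi_1(M))$ is automatically trivial whenever every $g \in \mathrm{SL}_n(\mathbb{Z})$ acts by a homeomorphism $\varphi_g$ that is homotopic to $\mathrm{id}_M$.

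First I would recall the standard basepoint-tracking fact: if $\varphi_g \simeq \mathrm{id}_M$ via a homotopy $H : M \times [0,1] \to M$, then, fixing a basepoint $x_0$ and setting $\gamma_g(t) = H(x_0,t)$, the induced map $(\varphi_g)_* : \pi_1(M, x_0) \to \pi_1(M, \varphi_g(x_0))$ coincides with the change-of-basepoint isomorphism determined by $\gamma_g$. Conjugating back along any path from $\varphi_g(x_0)$ to $x_0$ to get an element of $\mathrm{Aut}(\pi_1(M, x_0))$, one obtains an automorphism of the form ``conjugation by a loop'', i.e.\ an inner automorphism; hence its class in $\mathrm{Out}(\pi_1(M))$ is trivial.

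Applying this observation to each $g$ shows that the induced homomorphism $\mathrm{SL}_n(\mathbb{Z}) \to \mathrm{Out}(\pi_1(M))$ is trivial, and then Theorem \ref{th1} concludes that the full action of $\mathrm{SL}_n(\mathbb{Z})$ on $M$ is trivial. I do not anticipate any genuine obstacle here: the corollary is essentially a one-line deduction, and all of the real content sits inside Theorem \ref{th1}. The only small bookkeeping point is to make sure the induced outer-automorphism class is defined independently of the auxiliary path chosen from $\varphi_g(x_0)$ back to $x_0$, but this is automatic since different choices differ by an inner automorphism.
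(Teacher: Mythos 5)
Your argument is correct and is exactly the intended deduction: the paper offers no separate proof, calling the corollary an ``obvious application'' of Theorem \ref{th1}, and the content you supply --- that a homeomorphism homotopic to the identity induces an inner automorphism of $\pi _{1}(M)$, hence the trivial class in $\mathrm{Out}(\pi _{1}(M))$ --- is precisely the standard fact being invoked. Nothing further is needed.
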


For aspherical manifolds with finitely generated nilpotent fundamental
groups (eg. Nil-manifolds), we confirm Conjecture \ref{conj} as follows.

\begin{theorem}
\label{th2}Let $M^{r}$ be an aspherical manifold$.$ If the fundamental group 
$\pi _{1}(M)$ is finitely generated nilpotent, any group action of $\mathrm{%
SL}_{n}(\mathbb{Z})$ $(n\geq 3)$ on $M^{r}$ $(r\leq n-1)$ by homeomorphisms
is trivial.
\end{theorem}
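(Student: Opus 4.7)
By Theorem \ref{th1}, it suffices to show that any homomorphism $\phi\colon \mathrm{SL}_{n}(\mathbb{Z})\to \mathrm{Out}(\Gamma)$ is trivial, where $\Gamma:=\pi_{1}(M)$. Since $M$ is a closed aspherical manifold, $\Gamma$ is torsion-free; together with the hypothesis, $\Gamma$ is a finitely generated torsion-free nilpotent group whose Hirsch length equals $r$.

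The strategy is to attack $\mathrm{Out}(\Gamma)$ by first projecting to the abelianization and then handling the ``IA part'' separately. Because $[\Gamma,\Gamma]$ is isolated in a torsion-free nilpotent group, the abelianization $\Gamma^{\mathrm{ab}}$ is torsion-free, so $\Gamma^{\mathrm{ab}}\cong \mathbb{Z}^{d}$ with $d\leq r\leq n-1$. The natural map $\mathrm{Out}(\Gamma)\to \mathrm{Aut}(\Gamma^{\mathrm{ab}})=\mathrm{GL}_{d}(\mathbb{Z})$ composes with $\phi$ to give $\psi\colon \mathrm{SL}_{n}(\mathbb{Z})\to \mathrm{GL}_{d}(\mathbb{Z})$. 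Any such $\psi$ corresponds to a linear action of $\mathrm{SL}_{n}(\mathbb{Z})$ on the torus $T^{d}$, which must be trivial by Weinberger's theorem \cite{we}; since a linear torus action is trivial if and only if the underlying matrix homomorphism is trivial, $\psi$ itself vanishes.

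Consequently, the image of $\phi$ lies in $K:=\mathrm{IA}(\Gamma)/\mathrm{Inn}(\Gamma)$, where $\mathrm{IA}(\Gamma)$ is the kernel of $\mathrm{Aut}(\Gamma)\to \mathrm{Aut}(\Gamma^{\mathrm{ab}})$ (note $\mathrm{Inn}(\Gamma)\subseteq \mathrm{IA}(\Gamma)$, because inner automorphisms act trivially modulo commutators). I would then show $K$ is nilpotent via the Andreadakis--Kaloujnine filtration $A_{k}:=\{\varphi\in \mathrm{Aut}(\Gamma):\varphi(x)x^{-1}\in \gamma_{k+1}(\Gamma)\text{ for every }x\in \Gamma\}$. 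One has $A_{1}=\mathrm{IA}(\Gamma)$, $A_{c}=1$ when $\Gamma$ has nilpotency class $c$, and the commutator estimate $[A_{k},A_{\ell}]\subseteq A_{k+\ell}$. These together force $\mathrm{IA}(\Gamma)$, and hence its quotient $K$, to be nilpotent.

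Finally, since $\mathrm{SL}_{n}(\mathbb{Z})$ is perfect for $n\geq 3$, any homomorphism from it to a nilpotent group must be trivial: the image would be a perfect nilpotent group, and is therefore trivial. This forces $\phi=1$, completing the argument. The main obstacle is the rigidity input that $\mathrm{SL}_{n}(\mathbb{Z})\to \mathrm{GL}_{d}(\mathbb{Z})$ vanishes for $d\leq n-1$; this step is not elementary, but the torus case \cite{we} of Conjecture \ref{conj} supplies exactly what is needed, and the remaining structural pieces (isolatedness of $[\Gamma,\Gamma]$, the Andreadakis filtration, and perfectness of $\mathrm{SL}_{n}(\mathbb{Z})$) are standard.
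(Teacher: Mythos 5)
Your decomposition of $\mathrm{Out}(\Gamma)$ — kill the linear part on the abelianization, then use nilpotency of the IA-automorphisms — is genuinely different from the paper's, which instead climbs the \emph{upper} central series one step at a time (Lemmas \ref{ni} and \ref{hom}, via an $H^2$-vanishing lifting argument). However, your first step rests on a false claim: the commutator subgroup of a finitely generated torsion-free nilpotent group need \emph{not} be isolated, and $\Gamma^{\mathrm{ab}}$ need not be torsion-free. Take $\Gamma=\langle x,y,z\mid [x,y]=z^{2},\ [x,z]=[y,z]=1\rangle$, a lattice in the real Heisenberg group (torsion-free, class $2$, Hirsch length $3$): here $[\Gamma,\Gamma]=\langle z^{2}\rangle$ does not contain $z$, and $\Gamma^{\mathrm{ab}}\cong\mathbb{Z}^{2}\oplus\mathbb{Z}/2$. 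Consequently $\mathrm{Aut}(\Gamma^{\mathrm{ab}})\neq\mathrm{GL}_{d}(\mathbb{Z})$, and the kernel of the map $\mathrm{Aut}(\Gamma)\to\mathrm{GL}_{d}(\mathbb{Z})$ given by the action on $\Gamma^{\mathrm{ab}}$ modulo torsion is strictly larger than $\mathrm{IA}(\Gamma)$: it surjects onto a group of automorphisms of the finite torsion part, which a priori contains copies of $\mathrm{GL}_{k}(\mathbb{Z}/p)$ and is not nilpotent, so the "perfect into nilpotent" punchline does not apply to it. The repair is either to run your filtration argument with the isolated lower central series $\sqrt{\gamma_{k}(\Gamma)}$ (characteristic, central, torsion-free quotients, and still satisfying $[\sqrt{\gamma_{k}},\sqrt{\gamma_{\ell}}]\subseteq\sqrt{\gamma_{k+\ell}}$, so its stability group is nilpotent), or to separately kill homomorphisms into automorphism groups of finite abelian groups — which is exactly what the paper's Lemmas \ref{trg} and \ref{ab} are designed for, and which costs an extra dimension ($k<n-1$ rather than $k\leq n-1$) if done via Landazuri--Seitz.

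The second gap is the rigidity input. Weinberger's theorem \cite{we} asserts that an action of $\mathrm{SL}_{n}(\mathbb{Z})$ on a small torus factors through a \emph{finite} group; for a linear action this only says the image of $\psi$ in $\mathrm{GL}_{d}(\mathbb{Z})$ is finite, not trivial, and $\mathrm{SL}_{n}(\mathbb{Z})$ has plenty of nontrivial finite quotients. To conclude $\psi=1$ you need the statement $\mathrm{Hom}(\mathrm{SL}_{n}(\mathbb{Z}),\mathrm{GL}_{d}(\mathbb{Z}))=1$ for $d\leq n-1$, which is the paper's Corollary \ref{cor}, obtained from the triviality of Steinberg-group actions on $\mathbb{R}^{d}$ (Theorem \ref{th4}, after Bridson--Vogtmann), not from the torus case of the conjecture. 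With these two repairs your route does go through, and it is arguably leaner than the paper's in that it invokes the $\mathrm{GL}_{d}(\mathbb{Z})$ rigidity only once; but as written both the isolation claim and the appeal to Weinberger fail.
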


We now study group actions on (almost) flat manifolds. Recall that a closed
manifold $M$ is almost flat if for any $\varepsilon >0$ there is a
Riemannian metric $g_{\varepsilon }$ on $M$ such that $\mathrm{diam}%
(M,g_{\varepsilon })<1\ $and $g_{\varepsilon }$ is $\varepsilon $-flat.

\begin{theorem}
\label{th3}Let $M^{r}$ be a closed almost flat manifold with holonomy group $%
\Phi .$ A group action of $\mathrm{SL}_{n}(\mathbb{Z})$ $(n\geq 3)$ on $%
M^{r} $ $(r\leq n-1)$ by homeomorphisms is trivial if and only if the
induced group homomorphism $\mathrm{SL}_{n}(\mathbb{Z})\rightarrow \mathrm{%
Out}(\Phi )$ is trivial. In particular, Conjecture \ref{conj} holds if the
set of group homomorphisms 
\begin{equation*}
\mathrm{Hom}(\mathrm{SL}_{n}(\mathbb{Z}),\mathrm{Out}(\Phi ))=1.
\end{equation*}
\end{theorem}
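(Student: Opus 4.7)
My plan is to use Theorem \ref{th1} to reduce the statement to a question about outer automorphism groups, then exploit the almost-flat structure (together with Theorem \ref{th2}) to conclude. The forward direction is immediate: a trivial action on $M$ induces the trivial map on $\mathrm{Out}(\pi_{1}(M))$, and since the kernel $N$ of $\pi_{1}(M)\twoheadrightarrow\Phi$ is characteristic in $\pi_{1}(M)$ (by Gromov--Ruh, $N$ is the unique maximal normal nilpotent subgroup of finite index), the induced map on $\mathrm{Out}(\Phi)$ is trivial as well.

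For the converse, suppose the induced homomorphism $\mathrm{SL}_{n}(\mathbb{Z})\to\mathrm{Out}(\Phi)$ is trivial. By Theorem \ref{th1} applied to $M$, it suffices to prove that the induced homomorphism $\rho_{*}:\mathrm{SL}_{n}(\mathbb{Z})\to\mathrm{Out}(\pi_{1}(M))$ is trivial. The extension $1\to N\to\pi_{1}(M)\to\Phi\to 1$ with $N$ characteristic gives a natural homomorphism $\mathrm{Out}(\pi_{1}(M))\to\mathrm{Out}(\Phi)$ whose composition with $\rho_{*}$ is trivial, so the image of $\rho_{*}$ lies in $K:=\ker\bigl(\mathrm{Out}(\pi_{1}(M))\to\mathrm{Out}(\Phi)\bigr)$. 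For each $[\alpha]\in K$, adjusting a representative $\alpha$ by an inner automorphism of $\pi_{1}(M)$ coming from a lift of the conjugating element of $\Phi$ produces a representative acting as the identity on $\Phi$; restriction to $N$ then gives a well-defined map $K\to\mathrm{Out}(N)$ and hence a homomorphism $\mathrm{SL}_{n}(\mathbb{Z})\to\mathrm{Out}(N)$.

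Now the nilmanifold cover $\widetilde{M}\to M$ (of the same dimension $r\leq n-1$) has $\pi_{1}(\widetilde{M})=N$ finitely generated nilpotent, so Theorem \ref{th2} combined with Theorem \ref{th1} asserts that any $\mathrm{SL}_{n}(\mathbb{Z})$-action on $\widetilde{M}$ induces a trivial map on $\mathrm{Out}(N)$. I would argue that the proof of Theorem \ref{th2} in fact yields the stronger algebraic statement that every homomorphism $\mathrm{SL}_{n}(\mathbb{Z})\to\mathrm{Out}(N)$ is trivial (via the arithmeticity of $\mathrm{Out}(N)$ and Margulis-type rigidity). Granting this, the image of $\rho_{*}$ lands in the kernel of $K\to\mathrm{Out}(N)$, which consists of outer classes $[\alpha]$ trivial on both $N$ and $\Phi$; by the standard classification this kernel is a subquotient of $H^{1}(\Phi,Z(N))$, a finite abelian group (since $\Phi$ is finite and $Z(N)$ is finitely generated abelian). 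Since $\mathrm{SL}_{n}(\mathbb{Z})$ is perfect for $n\geq 3$, no nontrivial homomorphism into this finite abelian group exists, and $\rho_{*}=1$ follows.

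The main obstacle is bridging from the action-level statement of Theorem \ref{th2} to the algebraic statement about arbitrary homomorphisms $\mathrm{SL}_{n}(\mathbb{Z})\to\mathrm{Out}(N)$. If this strengthening does not come directly from the proof of Theorem \ref{th2}, the fallback is to realize the relevant homomorphism as induced by a lifted action on $\widetilde{M}$; such a lift generally exists only after passing to a finite-index subgroup of $\mathrm{SL}_{n}(\mathbb{Z})$, so this alternative path requires either an extension of Theorem \ref{th2} to finite-index subgroups (e.g., via the congruence subgroup property) or a descent argument showing that the resulting finite-quotient action on $M$ is also trivial.
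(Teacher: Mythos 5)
Your overall skeleton matches the paper's: reduce via Theorem \ref{th1} to showing that $\mathrm{SL}_{n}(\mathbb{Z})\rightarrow \mathrm{Out}(\pi _{1}(M))$ is trivial, push the image into the kernel of $\mathrm{Out}(\pi _{1}(M))\rightarrow \mathrm{Out}(\Phi )$, control that kernel through $\mathrm{Out}(N)$ (the algebraic strengthening of Theorem \ref{th2} you ask for is exactly the paper's Lemma \ref{hom}, no Margulis rigidity needed), and kill the abelian remainder $H^{1}(\Phi ;Z(N))$ by perfectness. But there is a genuine gap at the central step: the ``restriction to $N$'' map $K\rightarrow \mathrm{Out}(N)$ is not well defined. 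A class $[\alpha ]\in K$ has many representatives inducing the identity on $\Phi $; any two differ by a conjugation $c_{h}$ with $h\in \pi _{1}(M)$ mapping to an element of $Z(\Phi )$, and $c_{h}|_{N}$ need not be inner in $N$. For a flat manifold this failure is unavoidable: $N\cong \mathbb{Z}^{r}$, so $\mathrm{Inn}(N)=1$ while $c_{h}|_{N}$ is the holonomy matrix of $\bar{h}$, which is nontrivial whenever $Z(\Phi )\neq 1$ since the holonomy representation is faithful. What one actually obtains (this is the content of the Igodt--Malfait description, Lemma \ref{key}) is a homomorphism into $Q_{2}=K^{\prime }/Z(\Phi )$, where $K^{\prime }$ is a subgroup of $C_{\mathrm{Out}(N)}(\Phi )$ and $1\rightarrow Z(\Phi )\rightarrow K^{\prime }\rightarrow Q_{2}\rightarrow 1$ is a central extension with finite kernel.

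This is precisely where your argument would break and where the paper does something you cannot avoid: to transfer the vanishing of $\mathrm{Hom}(-,\mathrm{Out}(N))$ from $K^{\prime }$ down to $Q_{2}$ one must lift a homomorphism into $Q_{2}$ through a central extension with finite abelian kernel $Z(\Phi )$. The obstruction lives in $H^{2}(\mathrm{SL}_{n}(\mathbb{Z});Z(\Phi ))$, which need not vanish because $H_{2}(\mathrm{SL}_{n}(\mathbb{Z});\mathbb{Z})\neq 0$; this is why the paper replaces $\mathrm{SL}_{n}(\mathbb{Z})$ by the Steinberg group $\mathrm{St}_{n}(\mathbb{Z})$, whose second homology vanishes, performs the lift there via Lemma \ref{berrick}, and then applies Lemma \ref{hom} to $\mathrm{St}_{n}(\mathbb{Z})$. (The paper's closing remark makes exactly this point: for torsion-free coefficients, as in Theorems \ref{th1} and \ref{th2}, $\mathrm{SL}_{n}(\mathbb{Z})$ suffices, but here the kernel $Z(\Phi )$ is finite.) Your fallback of lifting actions to the nilmanifold cover is not needed, but some device for handling the finite central kernel $Z(\Phi )$ is, and your proposal does not supply one.
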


Surprisingly, the proof of Theorem \ref{th3} will use knowledge on algebraic 
$K$-theory (Steinberg groups $\mathrm{St}_{n}(\mathbb{Z})$ and $K_{2}(%
\mathbb{Z})$, especially). The usual Zimmer's program is stated for any
lattices in high-rank semisimple Lie groups. However, Theorems \ref{th1}, %
\ref{th2}, \ref{th3} cannot hold for general lattices. For example, the
congruence subgroup $\Gamma (n,p),$ which is defined as the kernel of $%
\mathrm{SL}_{n}(\mathbb{Z})\rightarrow \mathrm{SL}_{n}(\mathbb{Z}/p)$ for a
prime $p,$ has a nontrivial finite cyclic quotient group (cf. \cite{lsz},
Theorem 1.1). The group $\Gamma (n,p)$ could act on $S^{1}$ through the
cyclic group by rotations.

In order to confirm Conjecture \ref{conj}, it's enough to show that every
group homomorphism from $\mathrm{SL}_{n}(\mathbb{Z})$ to the outer
automorphism group of the fundamental (or holonomy) group is trivial, by
Theorem \ref{th1} and Theorem \ref{th3}. Actually, Conjecture \ref{conj}
could be confirmed in this way for many other manifolds in addition to
manifolds with nilpotent fundamental groups proved in Theorem \ref{th2}.
These include the following:

\begin{itemize}
\item Flat manifolds with abelian holonomy group (see Corollary \ref{app1}
for a more general result);

\item Almost flat manifolds with dihedral, symmetric or alternating holonomy
group (cf. Lemma \ref{di});

\item Flat manifold of dimension $r\leq 5$ (cf. Corollary \ref{lflat}).
\end{itemize}

The article is organized as follows. In Section 2, we study the group action
of Steinberg group on spheres and acyclic manifolds. In Section 3, we give a
proof of Theorem \ref{th1}. Theorem \ref{th2} is proved in Section 4. In
Section 5, we study group action on flat manifolds and Theorem \ref{th3} is
proved. In the last section, we give some applications to flat manifolds
with special holonomy groups.

\section{ The action of Steinberg groups on spheres and acyclic manifolds}

\subsection{Steinberg group}

For a unitary associative ring $R$, the Steinberg group $\mathrm{St}_{n}(R)$ 
$(n\geq 3)$, is generated by $x_{ij}(r)$ for $1\leq i,j\leq n$ and $r\in R$
subject to the relations:

\begin{enumerate}
\item $x_{ij}(r_{1})\cdot x_{ij}(r_{2})=x_{ij}(r_{1}+r_{2});$

\item $[x_{ij}(r_{1}),x_{jk}(r_{2})]=x_{ik}(r_{1}r_{2});$

\item $[x_{ij}(r_{1}),x_{pq}(r_{2})]=1$ if $i\neq p$ and $j\neq q.$
\end{enumerate}

Let $R=\mathbb{Z},$ the integers. There is a natural group homomorphism $f:%
\mathrm{St}_{n}(\mathbb{Z})\rightarrow \mathrm{SL}_{n}(\mathbb{Z})$ mapping $%
x_{ij}(r)$ to the matrix $e_{ij}(r),$ which is a matrix with 1s along the
diagonal, $r$ in the $(i,j)$-th position and zeros elsewhere. Denote by $%
\omega _{ij}(-1)=x_{ij}(-1)x_{ji}(1)x_{ij}(-1)$, $h_{ij}=\omega
_{ij}(-1)\omega _{ij}(-1)$ and $a=h_{12}^{2}.$ We call $a$ a Steinberg
symbol, denoted by $\{-1,-1\}$ usually.

\begin{lemma}[Milnor \protect\cite{mi}, Theorem 10.1]
\label{mi}For $n\geq 3,$ the group $\mathrm{St}_{n}(\mathbb{Z})$ is a
central extension 
\begin{equation*}
1\rightarrow K\rightarrow \mathrm{St}_{n}(\mathbb{Z})\rightarrow \mathrm{SL}%
_{n}(\mathbb{Z})\rightarrow 1,
\end{equation*}%
where $K$ is the cyclic group of order $2$ generating by $%
a=(x_{12}(-1)x_{21}(1)x_{12}(-1))^{4}.$
\end{lemma}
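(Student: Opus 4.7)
The plan is to establish the three ingredients of the statement separately: (i) the map $f$ is surjective, (ii) the kernel $K$ is central in $\mathrm{St}_{n}(\mathbb{Z})$, and (iii) $K$ is cyclic of order $2$ generated by $a$.

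Surjectivity is routine: since $\mathbb{Z}$ is Euclidean, row/column reduction shows that $\mathrm{SL}_{n}(\mathbb{Z})$ is generated by the elementary matrices $e_{ij}(r)$, which are the images of the Steinberg generators $x_{ij}(r)$. For centrality I would exploit $n\geq 3$ through the commutator identity $x_{ij}(r)=[x_{ik}(1),x_{kj}(r)]$, available for any third index $k\notin\{i,j\}$. Given $y\in K$, lifting this factorization and combining the defining relations (1)--(3) with the Hall--Witt identity inductively forces $y$ to commute with each generator $x_{ij}(r)$; this is the classical argument identifying $\mathrm{St}_{n}(\mathbb{Z})\to\mathrm{SL}_{n}(\mathbb{Z})$ as a central extension.

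For step (iii), I would first verify by direct manipulation of the Steinberg relations that $a=(x_{12}(-1)x_{21}(1)x_{12}(-1))^{4}$ maps to the identity matrix and satisfies $a^{2}=1$ in $\mathrm{St}_{n}(\mathbb{Z})$, using the symbol identities $\{u,v\}\{v,u\}=1$ and the vanishing of $\{u,-u\}$. The substantive step is showing that $K$ coincides with $\langle a\rangle$, which is the computation $K_{2}(\mathbb{Z})\cong\mathbb{Z}/2$. Milnor's approach is to carry out a Euclidean-algorithm reduction inside $\mathrm{St}_{n}(\mathbb{Z})$, normalizing any word in the generators whose image is the identity and repeatedly applying the Steinberg relations to reduce it to a power of the symbol $\{-1,-1\}$. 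Nontriviality of $a$, so that the order is exactly $2$ rather than $1$, is then obtained by exhibiting an explicit nontrivial central extension of $\mathrm{SL}_{n}(\mathbb{Z})$ — for example through the real Hilbert symbol or the metaplectic cover — in which $a$ maps to a nonidentity element.

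The principal obstacle is this last step: surjectivity and centrality are either elementary or formal, but proving $K=\langle a\rangle$ requires both the full Euclidean-style reduction inside the Steinberg group and the separate nontriviality input. For finite $n\geq 3$ there is an additional stability comparison between $\mathrm{St}_{n}(\mathbb{Z})$ and the stable Steinberg group $\mathrm{St}(\mathbb{Z})$, which Milnor handles by an explicit examination of the presentations to show that the unstable kernel already realizes $K_{2}(\mathbb{Z})$.
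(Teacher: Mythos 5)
The paper offers no proof of this lemma: it is imported verbatim from Milnor's book (Theorem 10.1 and the accompanying discussion of $\mathrm{St}_{n}(\mathbb{Z})$ for finite $n$), so there is no internal argument to compare yours against. Judged on its own terms, your outline reproduces the right global strategy --- surjectivity from elementary generation of $\mathrm{SL}_{n}(\mathbb{Z})$ over the Euclidean ring $\mathbb{Z}$, identification of the kernel with $\langle \{-1,-1\}\rangle$ by a normal-form/Euclidean reduction inside the Steinberg group, and nontriviality of $\{-1,-1\}$ detected over $\mathbb{R}$ --- and it correctly isolates $K=\langle a\rangle$ as the substantive step. The verification that $f(a)$ is the identity (the matrix $e_{12}(-1)e_{21}(1)e_{12}(-1)$ is a rotation of order $4$) and that $a^{2}=1$ via the symbol identities is fine.

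The one step that would fail as written is centrality. The ``formal'' argument you allude to proves that $\ker (\mathrm{St}(R)\rightarrow E(R))$ is central for the \emph{stable} Steinberg group: given $y$ in the kernel one chooses an index $p$ not occurring in a word representing $y$ and shows that $y$ centralizes all $x_{ip}(r)$ and $x_{pj}(r)$, hence, via $x_{ij}(r)=[x_{ip}(1),x_{pj}(r)]$, every generator. For the unstable group $\mathrm{St}_{n}(\mathbb{Z})$ with $n$ finite there is no spare index, and centrality of $\ker (\mathrm{St}_{n}(R)\rightarrow \mathrm{SL}_{n}(R))$ is in general a genuinely difficult theorem rather than a consequence of Hall--Witt manipulations. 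For $R=\mathbb{Z}$ the clean repair is to reverse the order of your steps: first prove $K=\langle a\rangle $ by the Euclidean reduction, and separately check by direct computation (using the conjugation formulas for the elements $\omega _{ij}(\pm 1)$) that $a$ commutes with every $x_{ij}(r)$; centrality of $K$ then follows. Be aware also that your sketch leaves the entire content of $K=\langle a\rangle $ --- the actual reduction of an arbitrary kernel element to a power of $\{-1,-1\}$, carried out for each finite $n\geq 3$ --- as a black box, and that is where essentially all of the work in Milnor's argument lives.
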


\begin{lemma}
\label{quat}For distinct integers $i,j,s,t,$ we have the following.

\begin{enumerate}
\item[(i)] $[h_{ij},h_{st}]=1;$

\item[(ii)] $[h_{ij},h_{is}]=a;$

\item[(iii)] The subgroup $\langle h_{ij},h_{is}\rangle $ is isomorphic to
the quaternion group $Q_{8}.$
\end{enumerate}
\end{lemma}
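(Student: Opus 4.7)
The plan is to extract parts (i) and (ii) directly from the Steinberg relations together with the standard conjugation formulas for $\omega_{ij}(-1)$, and then to deduce (iii) by counting elements and matching a presentation of $Q_{8}$. As a preliminary I would first upgrade the definition $a=h_{12}^{2}$ to the uniform identity $h_{ij}^{2}=a$ for every pair $i\neq j$: the case $(1,2)$ is Lemma \ref{mi}, and for arbitrary indices one conjugates $h_{12}$ by a suitable product of $\omega_{kl}(\pm 1)$'s implementing a permutation that carries $\{1,2\}$ onto $\{i,j\}$, noting that the central element $a$ is fixed by the resulting conjugation.

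For (i), when $\{i,j\}\cap\{s,t\}=\emptyset$ the Steinberg relation (3) gives that every $x_{ij}(r)$ commutes with every $x_{st}(r')$, so $\omega_{ij}(-1)$, being a word in the $x_{ij}$'s and $x_{ji}$'s, commutes with $\omega_{st}(-1)$, and hence so do their squares $h_{ij}$ and $h_{st}$.

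The main step is (ii). The canonical projection $f\colon \mathrm{St}_{n}(\mathbb{Z})\rightarrow \mathrm{SL}_{n}(\mathbb{Z})$ sends $h_{ij}$ and $h_{is}$ to commuting diagonal matrices, so $[h_{ij},h_{is}]$ automatically lies in the kernel $K=\langle a\rangle\cong\mathbb{Z}/2$ by Lemma \ref{mi}. The substantive task is to certify that this commutator is the nontrivial element. Here I would invoke the Steinberg-symbol computation (cf.\ Milnor \cite{mi}, \S 9): writing $h_{ij}(u)=\omega_{ij}(u)\omega_{ij}(-1)$ so that $h_{ij}=h_{ij}(-1)$, one has the general commutator identity
\begin{equation*}
[h_{ij}(u),h_{is}(v)]=\{u,v\},
\end{equation*}
and specialising $u=v=-1$ yields exactly $\{-1,-1\}=a$.

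Finally (iii) follows by combining (ii) with the uniform identity $h_{ij}^{2}=h_{is}^{2}=a$ and the centrality and order-two nature of $a$: the subgroup $\langle h_{ij},h_{is}\rangle$ then satisfies the defining relations $x^{4}=1$, $x^{2}=y^{2}$, $yxy^{-1}=x^{-1}$ of $Q_{8}$, and listing the eight distinct elements $1,a,h_{ij},h_{is},h_{ij}h_{is},ah_{ij},ah_{is},ah_{ij}h_{is}$ completes the identification. The main obstacle is clearly (ii): seeing that $[h_{ij},h_{is}]\in K$ is immediate, but pinning down which of the two elements of $K$ this commutator equals requires the Steinberg-symbol calculation, and that is where essentially all of the real work inside $\mathrm{St}_{n}(\mathbb{Z})$ takes place.
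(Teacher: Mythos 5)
Your proposal is correct and follows essentially the same route as the paper: (i) from the commuting Steinberg relations, (ii) from Milnor's Steinberg-symbol commutator identity (the paper cites Milnor, Lemma 9.7, for exactly this), and (iii) from the order-$4$ computation combined with (ii). You spell out more detail (the uniform identity $h_{ij}^{2}=a$ and the explicit $Q_{8}$ presentation), but the substance is identical.
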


\begin{proof}
(i) follows the third Steinberg relation easily. (ii) is Lemma 9.7 of Milnor 
\cite{mi} (p. 74). A direct computation shows that $h_{ij},h_{is}$ and $%
h_{ij}h_{is}$ are all elements of order $4.$ Considering (ii), $\langle
h_{ij},h_{is}\rangle $ is isomorphic to the quaternion group.
\end{proof}

Denote by $q:\mathrm{SL}_{n}(\mathbb{Z})\rightarrow \mathrm{SL}_{n}(\mathbb{Z%
}/n)$ or $\mathrm{St}_{n}(\mathbb{Z})\rightarrow \mathrm{St}_{n}(\mathbb{Z}%
/n)$ the group homomorphism induced by the ring homomorphism $\mathbb{Z}%
\rightarrow \mathbb{Z}/n,$ for some integer $n.$ Let $\mathrm{SL}_{n}(%
\mathbb{Z},n\mathbb{Z})=\ker (\mathrm{SL}_{n}(\mathbb{Z})\rightarrow \mathrm{%
SL}_{n}(\mathbb{Z}/n))$ and $\mathrm{St}_{n}(\mathbb{Z},n\mathbb{Z})=\ker (%
\mathrm{St}_{n}(\mathbb{Z})\rightarrow \mathrm{St}_{n}(\mathbb{Z}/n))$ be
the congruence subgroups$.$

\begin{lemma}
\label{no}Let $N$ be a normal subgroup of $\mathrm{St}_{n}(\mathbb{Z}).$ If $%
f(N)$ contains $\mathrm{SL}_{n}(\mathbb{Z},2\mathbb{Z}),$ then $N$ contains
the element $a.$ In particular, the normal subgroup generated by $h_{ij}$ $%
(n\geq 3)$ or $h_{ij}h_{st}$ $(n\geq 5)$ contains the element $a$ for
distinct integers $i,j,s,t.$
\end{lemma}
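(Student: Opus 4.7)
The plan rests on two facts already in hand. First, by Lemma \ref{mi}, the subgroup $K=\langle a\rangle$ is central in $\mathrm{St}_n(\mathbb{Z})$, so for any $u,v\in\mathrm{St}_n(\mathbb{Z})$ the commutator $[u,v]$ depends only on $f(u)$ and $f(v)$. Second, by Lemma \ref{quat}(ii), $[h_{ij},h_{is}]=a$ for distinct $i,j,s$. Combining these, to prove $a\in N$ it suffices to exhibit $u,v\in N$ with $f(u)=f(h_{ij})$ and $f(v)=f(h_{is})$ for some distinct triple $i,j,s$ (which exists because $n\geq 3$): the element $[u,v]$ lies in $N$ and equals $[h_{ij},h_{is}]=a$.

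For the main statement I would note that $f(h_{ij})=\mathrm{diag}(-1_i,-1_j,1,\ldots,1)$ and $f(h_{is})$ both reduce to the identity modulo $2$, hence lie in $\mathrm{SL}_n(\mathbb{Z},2\mathbb{Z})\subseteq f(N)$; any preimages in $N$ then satisfy the criterion. For the ``in particular'' cases I would not try to verify $f(N)\supseteq\mathrm{SL}_n(\mathbb{Z},2\mathbb{Z})$ (which would need the classification of normal subgroups of $\mathrm{SL}_n(\mathbb{Z})$), but would instead produce the needed second lift by a single Weyl-style conjugation inside $N$. When $N$ is the normal closure of $h_{ij}$ with $n\geq 3$, pick $s\notin\{i,j\}$ and set $v:=\omega_{js}(-1)\,h_{ij}\,\omega_{js}(-1)^{-1}\in N$; a short computation from the defining Steinberg relations gives $f(v)=f(h_{is})$, so with $u=h_{ij}$ the criterion above yields $a=[u,v]\in N$.

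When $N$ is the normal closure of $h_{ij}h_{st}$ with $n\geq 5$, pick $p\notin\{i,j,s,t\}$, let $v:=\omega_{tp}(-1)(h_{ij}h_{st})\omega_{tp}(-1)^{-1}\in N$, and set $u:=h_{ij}h_{st}$. Since $f(u)$ and $f(v)=f(h_{ij}h_{sp})$ are commuting diagonal matrices, $[u,v]\in K=\{1,a\}$; expanding $[u,v]$ bi-multiplicatively---legal because the subgroup generated by all $h_{kl}$ is nilpotent of class two---and using Lemma \ref{quat}(i)--(ii), the three commutators $[h_{ij},h_{ij}]$, $[h_{ij},h_{sp}]$, $[h_{st},h_{ij}]$ all vanish (the last two because their index sets are disjoint), while $[h_{st},h_{sp}]=a$, giving $[u,v]=a\in N$. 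I expect the main obstacle to be the Weyl-type identity $f(\omega_{jk}(-1)h_{ij}\omega_{jk}(-1)^{-1})=f(h_{ik})$: this must be checked at the Steinberg level (not merely in $\mathrm{SL}_n$) from the defining relations, but once granted, everything else is a formal consequence of the centrality of $K$ together with Lemma \ref{quat}.
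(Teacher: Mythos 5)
Your proposal is correct, but it proves the lemma by a genuinely different mechanism than the paper does. The paper's route is: since $f(N)\supseteq\mathrm{SL}_n(\mathbb{Z},2\mathbb{Z})$, either $x_{pq}(2)$ or $a\cdot x_{pq}(2)$ lies in $N$ for all $p\neq q$; since $\mathrm{St}_n(\mathbb{Z},2\mathbb{Z})$ is normally generated by the $x_{pq}(2)$ (Magurn 13.18), $N$ contains $\mathrm{St}_n(\mathbb{Z},2\mathbb{Z})$ or its coset by $a$; and since the Steinberg symbol $\{-1,-1\}$ dies in $\mathrm{St}_n(\mathbb{Z}/2)$ (Milnor, Cor.\ 9.9), $a\in\mathrm{St}_n(\mathbb{Z},2\mathbb{Z})$, so the two cases coincide and $a\in N$. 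The ``in particular'' clause is then reduced to the main statement via the external fact (from \cite{ye}) that $f(h_{ij})$, resp.\ $f(h_{ij}h_{st})$, normally generates $\mathrm{SL}_n(\mathbb{Z},2\mathbb{Z})$. You instead exploit only the centrality of $K=\langle a\rangle$ (so commutators are determined by images under $f$) together with $[h_{ij},h_{is}]=a$ from Lemma \ref{quat}(ii): lifting $f(h_{ij}),f(h_{is})\in\mathrm{SL}_n(\mathbb{Z},2\mathbb{Z})\subseteq f(N)$ into $N$ and taking their commutator produces $a$ inside $N$ directly, and for the ``in particular'' cases a single Weyl conjugation manufactures the second generator inside the normal closure, so you never need the normal-generation results or the vanishing of $q(a)$ at all. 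Your computation in the $h_{ij}h_{st}$ case is sound: all commutators of the $h_{kl}$ land in the central $K$ by Lemma \ref{quat}, so bimultiplicativity of the commutator is legitimate, and the four terms reduce to $[h_{st},h_{sp}]=a$. One small simplification you could note: the identity $f(\omega_{js}(-1)h_{ij}\omega_{js}(-1)^{-1})=f(h_{is})$ that you flag as the main obstacle only involves images under $f$, so it is a plain matrix computation in $\mathrm{SL}_n(\mathbb{Z})$ (conjugation by a signed permutation matrix permuting the diagonal entries); no verification at the Steinberg level is required, precisely because your whole argument is insensitive to central corrections. The paper's proof yields the stronger conclusion $\mathrm{St}_n(\mathbb{Z},2\mathbb{Z})\subseteq N$, whereas yours gives exactly $a\in N$, which is all the lemma asserts; yours is the more elementary and self-contained of the two.
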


\begin{proof}
Recall that $e_{ij}(r)$ is a matrix with 1s along the diagonal, $r$ in the $%
(i,j)$-th position and zeros elsewhere. Since $f(N)$ contains $\mathrm{SL}%
_{n}(\mathbb{Z},2\mathbb{Z}),$ we have $x_{pq}(2)\ $or $a\cdot x_{pq}(2)\in
N $ for any integers $p\neq q\in \lbrack 1,n]$ (the interval). Note that $%
\mathrm{St}_{n}(\mathbb{Z},2\mathbb{Z})$ is normally generated by $x_{pq}(2)$
(cf. 13.18 of Magurn \cite{mg}, p.448). Therefore, $\mathrm{St}_{n}(\mathbb{Z%
},2\mathbb{Z})$ or $a\cdot $ $\mathrm{St}_{n}(\mathbb{Z},2\mathbb{Z})\subset
N$. However, it is known that the Steinberg symbol $q(a)\in \mathrm{St}_{n}(%
\mathbb{Z}/2)$ is trivial (cf. Corollary 9.9 of Milnor \cite{mi}, p.75).
Thus, $a\in a\cdot $ $\mathrm{St}_{n}(\mathbb{Z},2\mathbb{Z})=\mathrm{St}%
_{n}(\mathbb{Z},2\mathbb{Z})\subset N.$ The image $f(h_{ij})=\mathrm{diag}%
(1,\cdots ,-1,\cdots ,-1,\cdots ,1)$ (or $f(h_{ij}h_{st})$) normally
generates the congruence subgroup $\mathrm{SL}_{n}(\mathbb{Z},2\mathbb{Z})$
(cf. Ye \cite{ye}). The proof is finished.
\end{proof}

\subsection{Homology manifolds}

The generalized manifolds studied in this section are following Bredon's
book \cite{b}. Let $L=\mathbb{Z}$ or $\mathbb{Z}/p.$ All homology groups in
this section are Borel-Moore homology with compact supports and coefficients
in a sheaf $\mathcal{A}$ of modules over a principal ideal domain $L$. The
homology groups of $X$ are denoted by $H_{\ast }^{c}(X;\mathcal{A})$ and the
Alexander-Spanier cohomology groups (with coefficients in $L$ and compact
supports) are denoted by $H_{c}^{\ast }(X;L).$ We define $\dim _{L}X=\min
\{n\mid H_{c}^{n+1}(U;L)=0$ for all open $U\subset X\}.$ If $L=\mathbb{Z}/p,$
we write $\dim _{p}X$ for $\dim _{L}X.$ For integer $k\geq 0,$ let $\mathcal{%
O}_{k}$ denote the sheaf associated to the pre-sheaf $U\longmapsto
H_{k}^{c}(X,X\backslash U;L).$

\begin{definition}
An $n$-dimensional homology manifold over $L$ (denoted $n$-hm$_{L}$) is a
locally compact Hausdorff space $X$ with $\dim _{L}X<+\infty $, and $%
\mathcal{O}_{k}(X;L)=0$ for $p\neq n$ and $\mathcal{O}_{n}(X;L)$ is locally
constant with stalks isomorphic to $L$. The sheaf $\mathcal{O}_{n}$ is
called the orientation sheaf.
\end{definition}

There is a similar notion of cohomology manifold over $L$, denoted $n$-cm$%
_{L}$ (cf. \cite{b}, p.373). For a prime $p,$ denote by $\dim _{p}X$ the
cohomological dimension of $X.$

\begin{definition}
If $X$ is a compact $m$-hm$_{L}$ and $H_{\ast }^{c}(X;L)\cong H_{\ast
}^{c}(S^{m};L),$ then $X$ is called a generalized $m$-sphere over $L$.
Similarly, if $H_{0}^{c}(X;L)=L$ and $H_{k}^{c}(X;L)=0$ for $k>0,$ then $X$
is said to be $L$-acyclic.
\end{definition}

We will need the following lemmas. The first is a combination of Corollary
19.8 and Corollary 19.9 (page 144) in \cite{b} (see also Theorem 4.5 in \cite%
{bv}).

\begin{lemma}
\label{locsmith}Let $p$ be a prime and $X$ be a locally compact Hausdorff
space of finite dimension over $\mathbb{Z}_{p}.$ Suppose that $\mathbb{Z}%
_{p} $ acts on $X$ with fixed-point set $F.$

\begin{enumerate}
\item[(i)] If $H_{\ast }^{c}(X;\mathbb{Z}_{p})\cong H_{\ast }^{c}(S^{m};%
\mathbb{Z}_{p}),$ then $H_{\ast }^{c}(F;\mathbb{Z}_{p})\cong H_{\ast
}^{c}(S^{r};\mathbb{Z}_{p})$ for some $r$ with $-1\leq r\leq m.$ If $p$ is
odd, then $r-m$ is even.

\item[(ii)] If $X$ is $\mathbb{Z}_{p}$-acyclic, then $F$ is $\mathbb{Z}_{p}$%
-acyclic (in particular nonempty and connected).
\end{enumerate}
\end{lemma}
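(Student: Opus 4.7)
The plan is to deploy classical Smith theory for $\mathbb{Z}/p$-actions, adapted to Bredon's sheaf-theoretic framework so that it applies to arbitrary locally compact, finite-dimensional Hausdorff spaces (not merely CW complexes). Let $G = \mathbb{Z}_p = \langle T \rangle$, and introduce the Smith operators
\begin{equation*}
\sigma = 1 + T + T^{2} + \cdots + T^{p-1}, \qquad \tau = T - 1
\end{equation*}
in the group ring $\mathbb{Z}_p[G]$; a direct computation gives $\sigma \tau = \tau \sigma = 0$, and together $\sigma$ and $\tau$ generate the two minimal ideals of $\mathbb{Z}_p[G]$.

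First I would construct the Smith special cohomology groups $H^{*}_{\sigma}(X;\mathbb{Z}_p)$ and $H^{*}_{\tau}(X;\mathbb{Z}_p)$, defined as the cohomology of the subcomplexes of Alexander--Spanier cochains annihilated by $\sigma$ and $\tau$ respectively (for general $X$ one passes to the sheaf-theoretic version). The central tool is the pair of Smith--Floyd long exact sequences
\begin{equation*}
\cdots \to H^{n}_{\sigma}(X) \to H^{n}(X;\mathbb{Z}_p) \to H^{n}_{\tau}(X) \oplus H^{n}(F;\mathbb{Z}_p) \to H^{n+1}_{\sigma}(X) \to \cdots
\end{equation*}
together with its companion obtained by interchanging $\sigma$ and $\tau$. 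These sequences are exactly the machinery that Bredon sets up in chapter III, \S 19 of his book.

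With the sequences in hand, part (ii) follows by a short diagram chase: if $X$ is $\mathbb{Z}_p$-acyclic, comparing the two Smith sequences forces $H^{*}_{\sigma}(X) = H^{*}_{\tau}(X) = 0$ in positive degrees, so the sequences collapse and identify $H^{*}(F;\mathbb{Z}_p)$ with $H^{*}(\mathrm{pt};\mathbb{Z}_p)$; in particular $F$ is nonempty and connected. For part (i), I would run the same sequences starting from $H^{*}(X;\mathbb{Z}_p) \cong H^{*}(S^{m};\mathbb{Z}_p)$ and track $\mathbb{Z}_p$-dimensions through the exact sequences to see inductively that the total dimension of $H^{*}(F;\mathbb{Z}_p)$ is at most $2$ and is concentrated in two degrees, so $F$ is itself a $\mathbb{Z}_p$-cohomology sphere of some dimension $r$ with $-1 \leq r \leq m$ (with the convention $r = -1$ corresponding to $F = \emptyset$). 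The parity constraint for odd $p$ then comes from the equivariant Euler characteristic identity $\chi(X) \equiv \chi(F) \pmod{p}$, which, combined with $\chi(S^{m}) = 1 + (-1)^{m}$ and $\chi(S^{r}) = 1 + (-1)^{r}$, forces $m \equiv r \pmod{2}$.

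The main obstacle I expect is not the combinatorics of the Smith sequences themselves but the foundational work of setting them up in the stated generality: standard Smith theory is written for simplicial or CW actions, whereas here $X$ is only a locally compact Hausdorff space of finite cohomological dimension over $\mathbb{Z}_p$, and the cohomology theory in play is Alexander--Spanier with compact supports, dual to Borel--Moore homology. Transporting the sequences into this setting requires Bredon's sheaf-theoretic formalism, including the covariant behaviour of $\mathcal{O}_{k}$ and the comparison between equivariant and ordinary cohomology with compact supports. Rather than rebuild this machinery I would, as the authors do, cite Corollary~19.8 and Corollary~19.9 of Bredon's book directly.
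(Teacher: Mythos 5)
Your proposal is correct and matches the paper, which gives no independent argument but simply invokes Corollaries 19.8 and 19.9 of Bredon's book (together with Theorem 4.5 of Bridson--Vogtmann); your sketch of the Smith special cohomology groups, the two Smith--Floyd exact sequences, and the mod-$p$ Euler characteristic congruence for the parity statement is exactly the standard content behind those cited results. Nothing further is needed.
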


The following is a relation between dimensions of fixed point set and the
whole space (cf. Borel \cite{Bo}, Theorem 4.3, p.182).

\begin{lemma}
\label{borel}Let $G$ be an elementary $p$-group operating on a first
countable cohomology $n$-manifold $X$ mod $p.$ Let $x\in X$ be a fixed point
of $G$ on $X$ and let $n(H)$ be the cohomology dimension mod $p$ of the
component of $x$ in the fixed point set of a subgroup $H$ of $G.$ If $%
r=n(G), $ we have 
\begin{equation*}
n-r=\sum_{H}(n(H)-r)
\end{equation*}%
where $H\ $runs through the subgroups of $G$ of index $p.$
\end{lemma}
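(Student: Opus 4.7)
The plan is to induct on the rank $k$ of the elementary abelian $p$-group $G \cong (\mathbb{Z}/p)^k$. Before starting the induction I would replace $X$ by a small $G$-invariant open neighborhood of $x$, so that for every subgroup $H \le G$ the component $F_H$ of $x$ in the fixed point set $X^H$ is connected and has cohomology dimension $n(H)$ globally. The base case $k = 1$ is immediate: $G$ has a unique subgroup of index $p$, namely the trivial one, $n(\{e\}) = n$, and the identity collapses to $n - r = n - r$.

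For the inductive step I would first apply Smith theory iteratively (via Lemma~\ref{locsmith}) along any chain of $\mathbb{Z}/p$-quotients from $G$ down to $\{e\}$, concluding that each $F_H$ is itself a first countable cohomology manifold mod $p$ of dimension $n(H)$, with inclusions $F_G \subseteq F_H \subseteq F_{\{e\}}$. This puts every term appearing in the identity into a common local Smith-theoretic framework and already yields the inequalities $n(G) \le n(H) \le n$.

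The essential point is that the claimed identity is the cohomological shadow of a representation-theoretic fact: if $G$ acts linearly on $\mathbb{R}^n$ with fixed subspace of dimension $r$, the quotient representation of dimension $n-r$ splits over nontrivial characters $\chi \colon G \to \mathbb{F}_p^\times$, and each hyperplane $H = \ker\chi$ contributes exactly $n(H) - r = \sum_{\psi \ne 1,\, \ker\psi = H} \dim V_\psi$, which sums over all nontrivial characters to $n - r$. To import this into the topological setting I would pass to the Borel construction $EG \times_G X$ and regard $H^{\ast}_G(X;\mathbb{F}_p)$ as a module over $H^{\ast}(BG;\mathbb{F}_p)$, which modulo nilpotents is the polynomial ring $\mathbb{F}_p[t_1,\ldots,t_k]$. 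Borel's localization theorem identifies the localization at the product of the $t_i$ with the equivariant cohomology of $F_G$, while partial localizations along the hyperplane prime ideals recover each $F_H$; a Krull-dimension / Poincar\'e-series comparison along these localizations yields the required identity.

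The main obstacle is exactly this localization bookkeeping. The induction and the Smith-theoretic preparation are essentially routine once the components $F_H$ are known to be cohomology manifolds of the expected dimension, but turning the representation-theoretic decomposition above into a rigorous Poincar\'e-series identity requires enough finiteness to make the Borel spectral sequence degenerate and to ensure that the ranks of the localized modules match the dimensions $n(H)$; one may need to shrink further to an $\mathbb{F}_p$-acyclic neighborhood of $x$ for this. It is the equivariant localization analysis, rather than the induction itself, that carries the technical weight of the lemma.
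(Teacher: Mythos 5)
The paper does not prove this lemma at all: it is quoted directly from Borel's seminar (\cite{Bo}, Theorem 4.3, p.~182), so there is no in-paper argument to compare yours against. Borel's own proof is an induction on the rank of $G$ whose engine is the \emph{local} Smith theory developed earlier in that seminar (the rank-two case being the essential one), not the equivariant-cohomology localization you propose; your route is the one found in later treatments (e.g.\ Hsiang's book on topological transformation groups), so it is a genuinely different, and in principle legitimate, strategy.

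As a proof, however, your sketch has a real gap, and you point at it yourself: the entire content of the lemma is concentrated in the step you defer as ``localization bookkeeping.'' Passing from the $H^{*}(BG;\mathbb{F}_{p})$-module structure of $H^{*}_{G}(U;\mathbb{F}_{p})$, for a small invariant neighborhood $U$ of $x$, to the integers $n(H)$ --- which are local cohomology dimensions of the fixed components, not Krull dimensions or degrees of Poincar\'e series of a single module --- is exactly where the work lies, and nothing in the sketch indicates how the hyperplane-by-hyperplane localizations are to be assembled into the stated linear identity. Note also that the induction on the rank $k$ is never actually invoked once the localization argument is proposed, so the architecture of the argument is not coherent as written. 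A smaller but genuine misstep: you cite Lemma \ref{locsmith} for the assertion that each $F_{H}$ is again a first countable cohomology manifold mod $p$ of dimension $n(H)$; that lemma only computes $H_{*}^{c}$ of fixed sets when $X$ is a sphere or acyclic, whereas the statement you need (fixed sets of $p$-group actions on cohomology manifolds mod $p$ are again cohomology manifolds mod $p$) is a separate and substantially harder theorem of Borel. The linear-model heuristic --- the identity as the character decomposition of the normal representation at $x$ --- is the right intuition, but the proposal does not yet contain a proof.
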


The following lemma is proved by Bredon \cite{bre2} (Theorem 7.1).

\begin{lemma}
\label{Bredon}Let $G$ be a group of order $2$ operating effectively on an $n$%
-cm over $\mathbb{Z}$, with nonempty fixed points. Let $F_{0}$ be a
connected component of the fixed point set of $G$, and $r=\dim _{2}F_{0}$.
Then $n-r$ is even (respectively odd) if and only if $G$ preserves
(respectively reverses) the local orientation around some point of $F_{0}.$
\end{lemma}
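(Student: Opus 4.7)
The plan is to reduce the statement to a purely local computation at a single point $x \in F_{0}$ and then read off the sign of the $G$-action on the integral orientation sheaf from the Smith theory of a small invariant neighborhood of $x$.

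First, I would choose an arbitrary $x \in F_{0}$ together with a $G$-invariant open neighborhood $U$ of $x$ small enough that the orientation sheaf $\mathcal{O}_{n}$ is constant on $U$ and $U \cap F_{0}$ is connected, with such $U$ forming a cofinal basis at $x$. Because $X$ is an $n$-cm over $\mathbb{Z}$, the local cohomology group $H^{n}_{c}(U, U\setminus\{x\}; \mathbb{Z}) \cong \mathbb{Z}$, and by definition $G$ preserves (respectively reverses) the local orientation at $x$ if and only if it acts on this copy of $\mathbb{Z}$ as $+1$ (respectively $-1$). By local constancy of $\mathcal{O}_{n}$ and connectedness of $F_{0}$, this sign does not depend on the choice of $x \in F_{0}$, so the right-hand side of the equivalence is a genuinely pointwise statement.

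Second, I would invoke mod-$2$ Smith theory to control the local structure of the fixed set. By Lemma \ref{locsmith} together with its sheaf-theoretic refinement in Bredon's book, the component $F_{0}$ is itself a cohomology manifold of dimension $r$ over $\mathbb{Z}/2$, so $H^{r}_{c}(U \cap F_{0}, (U \cap F_{0})\setminus\{x\}; \mathbb{Z}/2) \cong \mathbb{Z}/2$. The local Smith exact sequence with $\mathbb{Z}/2$-coefficients then relates this mod-$2$ fundamental class of the fixed set to the mod-$2$ local cohomology of $U$ in a controlled, degree-shifting fashion.

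Third, and this is where the parity appears, I would compare integral with mod-$2$ local cohomology via the universal coefficient theorem and the Borel spectral sequence $H^{p}(G; H^{q}_{c}(U, U\setminus\{x\}; \mathbb{Z})) \Rightarrow H^{p+q}_{G}$. When $G$ acts as $+1$ on $\mathbb{Z}$, the top class survives in degree $n$ mod $2$ and, under iteration of the Smith connecting homomorphism, lands on the fixed-set top class precisely when the codimension $n-r$ is even; when $G$ acts as $-1$, the $G$-invariants vanish, a Bockstein-type connecting map shifts everything by one degree, and the class arrives in the fixed-set top degree exactly when $n-r$ is odd. Equivalently, one may model the situation by the cone on a $G$-action on a cohomology $(n-1)$-sphere whose fixed set is a cohomology $(r-1)$-sphere and compute directly that the induced map on $H^{n-1}(S^{n-1}; \mathbb{Z})$ has degree $(-1)^{n-r}$.

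The main obstacle I expect is the integral step in the third paragraph: integral Smith theory is significantly less clean than the mod-$p$ version, and extracting the sign of the $G$-action on the integral orientation sheaf from mod-$2$ data requires careful bookkeeping of the $\mathbb{Z}[G]$-module structure on the local cohomology. The cleanest route, which is essentially Bredon's own, is to bypass this by passing to the local spherical model, where the bookkeeping collapses to the single computation that an involution of a cohomology sphere with a given fixed cohomology subsphere has degree $(-1)^{n-r}$.
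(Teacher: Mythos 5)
The paper offers no proof of this lemma: it is quoted verbatim from Bredon's 1960 paper (Theorem 7.1 of \cite{bre2}), so there is no in-paper argument to measure yours against. Judged on its own terms, your sketch gets the framing right: the sign of the $G$-action on the stalk of $\mathcal{O}_{n}$ is locally constant, hence constant along the connected component $F_{0}$, so ``some point'' may be upgraded to ``every point,'' and Smith theory is indeed the engine. But the proof has a genuine gap exactly where you yourself locate the ``main obstacle,'' and the proposed bypass does not exist. A point of a cohomology $n$-manifold over $\mathbb{Z}$ need not have a neighborhood homeomorphic to a cone on a cohomology $(n-1)$-sphere; local conical structure is a feature of genuine or locally triangulable manifolds, not of cm's. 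So there is no ``local spherical model'' on which to ``compute directly'' that the induced degree is $(-1)^{n-r}$ -- and even if such a model existed, that degree computation \emph{is} the lemma in localized form, so the reduction would be circular rather than a simplification.

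The integral step, as described, also does not go through. The mod-$2$ Smith theory of your second paragraph is blind to the sign of the action on $\mathcal{O}_{n}$, and the parity bookkeeping of your third paragraph breaks down at the fixed set: $F_{0}$ is only a cohomology manifold over $\mathbb{Z}/2$, so its local fundamental class lives in $\mathbb{Z}/2$-coefficients, where the group cohomology $H^{\ast}(G;\mathbb{Z}/2)$ is nonzero in every degree; the contribution of $F_{0}$ to the $E_{2}$-page of the Borel spectral sequence is therefore not concentrated in a single parity, and the claim that the top class ``lands in the fixed-set top degree exactly when $n-r$ is even/odd'' cannot be read off in the way you indicate. What is actually required is Bredon's integral (special-coefficient) Smith apparatus -- a genuine analysis of the $\mathbb{Z}[G]$-module structure of the local cohomology and of the orbit space along $F_{0}$ -- which your sketch names but does not carry out. (For compact $X$ one can extract the global parity from the trace formula $L(\tau)=\chi(F)$, but that argument does not localize to a neighborhood of a point of $F_{0}$, which is what the lemma asserts.) As it stands the proposal is a plausible narrative of why the statement should be true, not a proof; the correct course here is to do what the paper does and cite \cite{bre2}.
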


\subsection{Steinberg group acting on $\mathbb{R}^{n}$ and $S^{n}$}

\begin{lemma}
\label{bv4.11}Let $X$ be a generalized $m$-sphere over $\mathbb{Z}/2$ ( or a 
$\mathbb{Z}/2$-acyclic $m$-hm$_{\mathbb{Z}/2},$ resp.). Suppose that $\tau $
is an involution of $X$ and $F$ is a closed $\tau $-invariant submanifold.
If $F$ containing $\mathrm{Fix}(\tau )$ is an $(m-1)$-sphere (or $\mathbb{Z}%
/2$-acyclic $(m-1)$-hm$_{\mathbb{Z}/2}$, resp.), then $X\backslash F$ has
two $\mathbb{Z}/2$-acyclic components and $\tau $ interchanges them.
\end{lemma}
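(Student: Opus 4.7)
The argument splits naturally into two steps: first, an Alexander-duality computation showing that $X \setminus F$ consists of exactly two $\mathbb{Z}/2$-acyclic open components; second, an application of Smith theory (Lemma~\ref{locsmith}(ii)) forcing $\tau$ to interchange these components rather than preserving either of them.

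For the first step, I would invoke the form of Alexander duality for generalized manifolds developed in Bredon's book \cite{b} (Chapter V). Since $F$ is a closed subspace of either a generalized $m$-sphere over $\mathbb{Z}/2$ or a $\mathbb{Z}/2$-acyclic $m$-hm$_{\mathbb{Z}/2}$, and $F$ itself is a generalized $(m-1)$-sphere (respectively, $\mathbb{Z}/2$-acyclic $(m-1)$-hm$_{\mathbb{Z}/2}$), the duality yields
\begin{equation*}
\tilde{H}_{i}^{c}(X\setminus F;\mathbb{Z}/2)\;\cong\;\tilde{H}_{c}^{m-i-1}(F;\mathbb{Z}/2),
\end{equation*}
which equals $\mathbb{Z}/2$ for $i=0$ and vanishes otherwise. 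Hence $X\setminus F$ has exactly two connected components and no higher $\mathbb{Z}/2$-homology; because reduced homology is additive across a disjoint union, each component must itself be $\mathbb{Z}/2$-acyclic. Each component is also a locally compact Hausdorff space of finite $\mathbb{Z}/2$-dimension (being open in $X$), so Smith theory applies to it.

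For the second step, $\tau$ certainly permutes the two components, since it preserves $X\setminus F$ (as $F$ is $\tau$-invariant) and is continuous. Suppose for contradiction that $\tau$ fixed one of the components $C$ setwise. Then $\tau|_{C}$ would be an involution of the $\mathbb{Z}/2$-acyclic space $C$ with empty fixed-point set, since $\mathrm{Fix}(\tau)\subseteq F$ and $F\cap C=\emptyset$. But Lemma~\ref{locsmith}(ii) guarantees that any $\mathbb{Z}/2$-action on a $\mathbb{Z}/2$-acyclic space has a nonempty (even $\mathbb{Z}/2$-acyclic) fixed-point set, a contradiction. Hence $\tau$ swaps the two components.

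The main obstacle is really the first step: formulating Alexander duality inside Bredon's sheaf-theoretic framework so that the computation goes through uniformly in both the generalized-sphere and $\mathbb{Z}/2$-acyclic cases, and then carefully deducing $\mathbb{Z}/2$-acyclicity of each individual component from the global vanishing. The Smith-theoretic conclusion in the second step is then nearly immediate from the hypotheses and Lemma~\ref{locsmith}.
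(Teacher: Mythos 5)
Your proof is correct and follows essentially the same route as the paper, which simply defers to Lemma 4.11 of Bridson--Vogtmann: duality in Bredon's framework to produce two $\mathbb{Z}/2$-acyclic components of $X\setminus F$, then Smith theory (the nonemptiness of the fixed set of an involution of a $\mathbb{Z}/2$-acyclic space, as in Lemma~\ref{locsmith}(ii)) to rule out $\tau$ preserving a component, using that $\mathrm{Fix}(\tau)\subseteq F$. No substantive difference to report.
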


\begin{proof}
The proof is exactly the same as that of Lemma 4.11 of Bridson and Vogthmann 
\cite{bv}, where $F=\mathrm{Fix}(\tau ).$
\end{proof}

We now study the group action of Steinberg groups $\mathrm{St}_{n}(\mathbb{Z}%
)$ on spheres and acyclic manifolds. Compared with the proof of actions of $%
\mathrm{SL}_{n}(\mathbb{Z})$, there is no enough involutions in $\mathrm{St}%
_{n}(\mathbb{Z}).$ Note that the element $h_{ij}\in \mathrm{St}_{n}(\mathbb{Z%
})$ corresponding to the involution $\mathrm{diag}(1,\cdots ,-1,\cdots
,-1,\cdots ,1)\in \mathrm{SL}_{n}(\mathbb{Z})$ is of order $4.$ Moreover, $%
h_{ij}$ and $h_{is}$ do not commute with each other. All these facts make
the study of $\mathrm{St}_{n}(\mathbb{Z})^{\prime }$s actions difficult and
the proof for the action of $\mathrm{SL}_{n}(\mathbb{Z})$ presented in \cite%
{bv} could not be carried to study that of $\mathrm{St}_{n}(\mathbb{Z})$
easily.

\begin{theorem}
\label{th4}We have the following.

\begin{enumerate}
\item[(1)] Any group action of $\mathrm{St}_{n}(\mathbb{Z})$ $(n\geq 4)$ on
a generalized $k$-sphere $M^{k}$ over $\mathbb{Z}/2$ $(k\leq n-2)$ by
homeomorphisms is trivial.

\item[(2)] Any group action of $\mathrm{St}_{n}(\mathbb{Z})$ $(n\geq 4)$ on
a $\mathbb{Z}/2$-acyclic $k$-hm$_{\mathbb{Z}/2}$ $M^{k}$ $(k\leq n-1)$ by
homeomorphisms is trivial.
\end{enumerate}
\end{theorem}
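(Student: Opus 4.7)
The strategy is to reduce Theorem \ref{th4} to the corresponding known statement for $\mathrm{SL}_n(\mathbb{Z})$-actions on the same class of spaces by forcing the central element $a$ into the kernel of the action. As the authors emphasize just above the theorem, the natural lifts $h_{ij}\in \mathrm{St}_n(\mathbb{Z})$ have order $4$, and pairs $h_{ij},h_{is}$ with a common index generate $Q_8$ rather than commute (Lemma \ref{quat}), so Smith theory cannot be applied to them directly. The hypothesis $n\geq 4$ is what allows me to manufacture honest involutions: for four distinct indices $i,j,s,t$, Lemma \ref{quat}(i) gives $[h_{ij},h_{st}]=1$ together with $h_{ij}^{2}=h_{st}^{2}=a$, so
\[
\sigma := h_{ij}h_{st}
\]
squares to $a^{2}=1$, and $f(\sigma)\in \mathrm{SL}_n(\mathbb{Z})$ is the diagonal involution with $-1$ in positions $i,j,s,t$ and $+1$ elsewhere.

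The heart of the argument would apply Smith theory (Lemma \ref{locsmith}) to the involution $\sigma$ on $M^k$: its fixed set $F=\mathrm{Fix}(\sigma)$ is a generalized $r$-sphere over $\mathbb{Z}/2$ with $-1\leq r\leq k$ in case (1), and a nonempty, connected $\mathbb{Z}/2$-acyclic $r$-hm$_{\mathbb{Z}/2}$ in case (2). The centralizer of $\sigma$ in $\mathrm{St}_n(\mathbb{Z})$ contains a copy of $\mathrm{St}_{n-4}(\mathbb{Z})$, namely the subgroup generated by those $x_{pq}(r)$ with $p,q\notin\{i,j,s,t\}$, and this subgroup acts on $F$. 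Inducting on $n$, and using Lemma \ref{bv4.11} together with the dimension and orientation bookkeeping furnished by Lemmas \ref{borel} and \ref{Bredon}, I expect to conclude that $\sigma$ acts trivially on $M$. Once the normal closure of such a $\sigma$ lies in the kernel of the action, Lemma \ref{no} immediately places $a$ into the kernel, so the action descends to an action of $\mathrm{SL}_n(\mathbb{Z})$. In that quotient genuine diagonal involutions $\mathrm{diag}(1,\ldots,-1,\ldots,-1,\ldots,1)$ are available, and the analogous Smith-theoretic argument (compare \cite{bv}) completes the proof.

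The main obstacle I foresee is the sharpness of the dimension count. Elementary abelian $2$-subgroups of $\mathrm{St}_n(\mathbb{Z})$ have rank only about $n/2$ — for instance $\langle a,\, h_{12}h_{34},\, h_{12}h_{56},\ldots\rangle$ — rather than the rank $n-1$ available inside $\mathrm{SL}_n(\mathbb{Z})$. Consequently, a naive translation of the $\mathrm{SL}_n(\mathbb{Z})$ argument falls short of the sharp hypotheses $k\leq n-2$ and $k\leq n-1$. Recovering these bounds will require combining involutions $\sigma$ for several disjoint quadruples of indices, together with their products, and using the induction on $n$ so that iterated Smith theory — governed by Lemma \ref{borel} — forces the cascade of fixed point sets to stabilize at all of $M$. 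Making this bookkeeping tight, especially in the small-$n$ base cases, is where the real work of the proof is likely to sit.
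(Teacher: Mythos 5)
Your skeleton does match the paper's: the proof really does hinge on the order-two elements $h_{ij}h_{st}$ (which is why $n\geq 4$ appears), on Smith theory, Borel's formula, Bredon's orientation lemma and Lemma \ref{bv4.11}, on Lemma \ref{no} to propagate triviality to the central element $a$, and on the Bridson--Vogtmann theorem once the action factors through $\mathrm{SL}_n(\mathbb{Z})$. But what you have written is a plan, not a proof: the entire core of the argument --- showing that $a$ must act trivially --- is compressed into ``I expect to conclude that $\sigma$ acts trivially,'' and that is exactly where the work lies. The paper does not prove directly that $\sigma$ acts trivially; it argues by contradiction from the assumption that $a$ acts nontrivially, splitting into the cases $\mathrm{Fix}(a)=\emptyset$ (possible only for odd-dimensional generalized spheres, by the Lefschetz fixed-point theorem) and $\emptyset\neq\mathrm{Fix}(a)\neq M$, and in each case runs Borel's formula on the rank-two groups $A=\langle a, h_{12}h_{34}\rangle$ and $B=\langle ah_{12}h_{34},ah_{34}h_{56}\rangle$ to pin down the possible fixed-set dimensions. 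The contradictions then come from ingredients absent from your proposal: (a) when $\mathrm{Fix}(a)$ ends up having codimension one in $\mathrm{Fix}(B)$, Lemma \ref{bv4.11} forces $a$ to interchange the two components of the complement, which is impossible because $a=h_{12}^{2}$ and $h_{12}$ preserves $\mathrm{Fix}(B)$; (b) in low dimensions the quaternion subgroups $\langle h_{12},h_{13}\rangle\cong Q_{8}$ of Lemma \ref{quat} cannot act freely on $S^{1}$ or effectively on the plane fixing a point; (c) an auxiliary element $\omega$ with $\omega^{2}=ah_{12}h_{34}$ is needed to dispose of one subcase. None of this is routine bookkeeping, and your proposal contains no mechanism that would replace it.

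There is also a concrete step that fails as stated: your final reduction applies Lemma \ref{no} to the normal closure of $\sigma=h_{ij}h_{st}$, but that lemma requires $n\geq 5$ for elements of this form, and for good reason --- when $n=4$ the image $f(h_{12}h_{34})=-I_{4}$ is central in $\mathrm{SL}_{4}(\mathbb{Z})$, so its normal closure does not contain $\mathrm{SL}_{4}(\mathbb{Z},2\mathbb{Z})$ and triviality of $\sigma$ does not force triviality of $a$ by this route. Since the theorem is asserted for $n\geq 4$, your plan breaks at the smallest case it must cover; the paper avoids this by invoking that form of Lemma \ref{no} only after Borel's formula has forced $n\geq 5$, and by working elsewhere with a single $h_{ij}$ (for which Lemma \ref{no} holds for all $n\geq 3$) and with the $Q_{8}$-subgroups. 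Your diagnosis that the small elementary abelian $2$-rank of $\mathrm{St}_{n}(\mathbb{Z})$ is the central obstacle is accurate, but the paper's resolution is not ``more involutions plus induction''; it is the specific interplay between the order-four lifts $h_{ij}$, the central involution $a$, and the component-swapping criterion of Lemma \ref{bv4.11}.
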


\begin{proof}
Let $a=$ $(x_{12}(1)x_{21}(-1)x_{12}(1))^{4}$ as in Lemma \ref{mi}.

\begin{enumerate}
\item[Case 1] If $a$ acts trivially on $M^{k},$ the group action of $\mathrm{%
St}_{n}(\mathbb{Z})$ factors through an action of $\mathrm{SL}_{n}(\mathbb{Z}%
).$ However, it is proved by Bridson and Vogtmann \cite{bv} that the group
action of $\mathrm{SL}_{n}(\mathbb{Z})$ is trivial. Suppose now that $a$
acts non-trivially.

\item[Case 2] $\mathrm{Fix}(a)\neq \emptyset $ and $\mathrm{Fix}(a)\neq
M^{k}.$ Since $\mathrm{St}_{n}(\mathbb{Z})$ $(n\geq 3)$ is perfect, every
element acts by orientation-preserving homeomorphism. Bredon's result (cf.
Lemma \ref{Bredon}) shows that $\mathrm{Fix}(a)$ is of even dimension. If $%
\dim _{2}\mathrm{Fix}(a)=k,$ $\mathrm{Fix}(a)=M^{k}$ by the invariance of
domain. This is a contradiction to the fact that $a$ acts non-trivially.
Therefore, 
\begin{equation*}
\dim _{2}\mathrm{Fix}(a)\leq k-2.
\end{equation*}%
Note that $h_{12}h_{34}$ is of order $2$ and $A:=\langle
a,h_{12}h_{34}\rangle $ is isomorphic to $(\mathbb{Z}/2)^{2}.$ Write $r=\dim
_{2}(\mathrm{Fix}(A))$ and $n(H)=\dim _{2}(\mathrm{Fix}(H))$ for each
non-trivial cyclic subgroup $H<A.$ By Borel's formula (cf. Lemma \ref{borel}%
),%
\begin{equation}
k-r=\sum n(H)-r,  \tag{1}
\end{equation}%
where $H$ ranges over the nontrivial subgroup of index $2$. Since $a$ is in
the center of $\mathrm{St}_{n}(\mathbb{Z}),$ there is an group action of $%
\mathrm{SL}_{n}(\mathbb{Z})$ acts on the acyclic $\mathbb{Z}/2$-manifold or
generalized sphere $\mathrm{Fix}(a)$ induced by that of $\mathrm{St}_{n}(%
\mathbb{Z}).$ This group action is trivial by Bridson and Vogtmann \cite{bv}%
. Thus, 
\begin{equation*}
n(\langle h_{12}h_{34}\rangle )\geq r=\dim _{2}\mathrm{Fix}(a).
\end{equation*}

\item[Case 2.1] If $n(\langle h_{12}h_{34}\rangle )=r,$ we have $n(\langle
a\cdot h_{12}h_{34}\rangle )=k$ by (1). By invariance of domain, $a\cdot
h_{12}h_{34}$ acts trivially on $M^{k}.$ Take $\omega =h_{12}\omega
_{12}(-1)\omega _{34}(-1)$ and $C=\langle a,\omega \rangle .$ Note that $%
\omega ^{2}=a\cdot h_{12}h_{34}$ and $f(\omega )=f(a\cdot \omega )$ has the
form%
\begin{equation*}
\begin{pmatrix}
& 1 &  &  \\ 
-1 &  &  &  \\ 
&  &  & -1 \\ 
&  & 1 & 
\end{pmatrix}%
\in \mathrm{SL}_{4}(\mathbb{Z}).
\end{equation*}%
Therefore, $C$ acts on $M^{k}$ as a group isomorphic to $(\mathbb{Z}/2)^{2}.$
If $\dim _{2}\mathrm{Fix}(\omega )=k,$ or $\dim _{2}\mathrm{Fix}(a\cdot
\omega )=k,$ i.e. $\omega $ or $a\cdot \omega $ acts trivially on $M^{k},$
the normal subgroup in $\mathrm{St}_{n}(\mathbb{Z})$ generated by $\omega $
or $a\cdot \omega $ contains $a$ by Lemma \ref{no}. This is a contradiction
to the fact that $\mathrm{Fix}(a)\neq M^{k}.$ Therefore, we may assume that $%
\dim _{2}\mathrm{Fix}(\omega )\leq k-2$ and $\dim _{2}\mathrm{Fix}(a\cdot
\omega )\leq k-2$, considering Lemma \ref{Bredon}. According to the Borel's
formula (1), we have $k\geq 4$ and $n\geq 5.$ By the invariance of domain, $%
\mathrm{Fix}(h_{12}h_{34})=M^{k}.$ The normal subgroup in $\mathrm{St}_{n}(%
\mathbb{Z})$ generated by $h_{12}h_{34}$ contains $a$ by Lemma \ref{no}.
Thus, $\mathrm{Fix}(a)=M^{k},$ which is impossible. Similarly, $n(\langle
a\cdot h_{12}h_{34}\rangle )\neq r.$

\item[Case 2.2] If $n(\langle h_{12}h_{34}\rangle )-r\geq 2$ and $n(\langle
a\cdot h_{12}h_{34}\rangle )-r\geq 2$ (noting that $n(\langle
h_{12}h_{34}\rangle )-r$ is even), then 
\begin{equation}
k-r=2(n(\langle h_{12}h_{34}\rangle )-r)\geq 4.  \tag{2}
\end{equation}

(Note that when $n\geq 5,$ $h_{15}h_{12}h_{34}h_{15}^{-1}=a\cdot
h_{12}h_{34} $ by Lemma \ref{quat} and thus $n(\langle h_{12}h_{34}\rangle
)=n(\langle a\cdot h_{12}h_{34}\rangle )$). When $k=4,$ we have $r=0$ and $%
n(\langle h_{12}h_{34}\rangle )=2.$ Therefore, $\mathrm{Fix}(h_{12}h_{34})$
is $S^{2}$ or $\mathbb{R}^{2}$ (cf. \cite{b}, 16.32, p.388). If $\mathrm{Fix}%
(h_{12}h_{34})=\mathbb{R}^{2},$ the quaternion group $\langle
h_{12},h_{13}\rangle $ acts on $\mathrm{Fix}(h_{12}h_{34})$ with a global
fixed point in $\mathrm{Fix}(a).$ Since any finite group of
orientation-preserving homeomorphisms of the plane that fix the origin is
cyclic, $\langle h_{12},h_{13}\rangle $ cannot act effectively. A nontrivial
element in $\langle h_{12},h_{13}\rangle $ will normally generate a group
containing $a$ by Lemma \ref{no}, which is impossible.

If $\mathrm{Fix}(h_{12}h_{34})=S^{2},$ we have $n\geq 6.$ Denote by $%
B=\langle a\cdot h_{12}h_{34},a\cdot h_{34}h_{56}\rangle \cong (\mathbb{Z}%
/2)^{2}.$ Note that $\mathrm{Fix}(a)\subset \mathrm{Fix}(B).$ Write $%
r^{\prime }=\dim _{2}(\mathrm{Fix}(B))$ and $n(H)=\dim _{2}(\mathrm{Fix}(H))$
for each non-trivial cyclic subgroup $H<B.$ By Borel's formula (cf. Lemma %
\ref{borel}),%
\begin{equation}
k-r^{\prime }=\sum n(H)-r^{\prime },  \tag{3}
\end{equation}%
where $H$ ranges over the nontrivial subgroup in $B$ of index $2$. Since any
two nontrivial elements in $B$ are conjugate (cf. \cite{mg}, 12.20, p.418),
we get 
\begin{equation}
k-r^{\prime }=3(n(\langle h_{12}h_{34}\rangle )-r^{\prime })  \tag{4}
\end{equation}%
and $r^{\prime }=1$. Therefore, $\mathrm{Fix}(a)$ is a submanifold of $%
\mathrm{Fix}(B)$ of codimension 1. By Lemma \ref{bv4.11}, $a$ permutes the
two components of $\mathrm{Fix}(B)\backslash \mathrm{Fix}(a).$ However, $%
h_{12}^{2}=a$ and $h_{12}\mathrm{Fix}(B)=\mathrm{Fix}(B).$ This is
impossible.

Similar arguments using (2) and (4) prove the following. When $k=5,$ $%
r=1,n(\langle h_{12}h_{34}\rangle )=3$ and $r^{\prime }=2.$ When $k=6,$ $%
r=2, $ $n(\langle h_{12}h_{34}\rangle )=4$ and $r^{\prime }=3.$ When $k=7,$ $%
r=3,n(\langle h_{12}h_{34}\rangle )=5$ and $r^{\prime }=4.$ In all these
cases, $\mathrm{Fix}(a)$ is still a submanifold in $\mathrm{Fix}(B)$ of
codimension 1. By Lemma \ref{bv4.11}, this is impossible. When $k=8,$ we
have either $r=4,n(\langle h_{12}h_{34}\rangle )=6,r^{\prime }=5$ or $%
r=0,n(\langle h_{12}h_{34}\rangle )=4,r^{\prime }=2$. The former is
impossible for the same reason as $k=7$, while the latter is impossible for
the same reason as $k=4.$

When $k\geq 9,$ we have $n\geq 10.$ If $k-r^{\prime }\geq 6,$ then $\mathrm{%
St}_{n-6}(\mathbb{Z})$ generating by all $x_{ij}(r)$ $(7\leq i\neq j\leq n)$
acts on $\mathrm{Fix}(B).$ By Smith theory (cf. Lemma \ref{locsmith}), $%
\mathrm{Fix}(B)$ is still a generalized sphere over $\mathbb{Z}/2$ or $%
\mathbb{Z}/2$-acyclic hm$_{\mathbb{Z}/2}.$ An inductive argument shows that $%
\mathrm{St}_{n-6}(\mathbb{Z})$ acts trivially on $\mathrm{Fix}(B).$
Therefore, $\mathrm{Fix}(B)=\mathrm{Fix}(a).$ However, this is impossible
considering formulas (2) and (4). If $k-r^{\prime }\leq 5,$ we have 
\begin{equation*}
3(n(\langle h_{12}h_{34}\rangle )-r^{\prime })\leq 5.
\end{equation*}%
When $n(\langle h_{12}h_{34}\rangle )-r^{\prime }=0,$ $\mathrm{Fix}%
(B)=M^{k}. $ Then $h_{12}h_{34}$ acts trivially on $M.$ The normal subgroup
generated by $h_{12}h_{13}$ contains $a$ (cf. Lemma \ref{no}), which means $%
a $ acts trivially. This is a contradiction. When $n(\langle
h_{12}h_{34}\rangle )-r^{\prime }=1,$ we have $k-r^{\prime }=3,$ $%
k-n(\langle h_{12}h_{34}\rangle )=2$ and $k-r=4.$ Therefore, $\mathrm{Fix}%
(a) $ is a submanifold in $\mathrm{Fix}(B)$ of codimension 1, which is
impossible as above by Lemma \ref{bv4.11}.

\item[Case 3] $\mathrm{Fix}(a)=\emptyset .$ According to the Lefschetz
fixed-point theorem, this can only happen when $M$ is a generalized sphere
of odd dimension.

When $k=1,$ $M=S^{1}$ (cf. \cite{b}, 16.32, p.388). The group $\langle
h_{12},h_{13}\rangle ,$ which is isomorphic to quaternion group $Q_{8}$ (cf.
Lemma \ref{quat}), acts freely on $S^{1}.$ However, this is impossible since
any finite subgroup of $\mathrm{Homeo}_{+}(S^{1})$ is isomorphic to a
subgroup of $\mathrm{SO}(2;\mathbb{R})$ (cf. Navas \cite{na}, Prop. 1.1.1).

Assume $k=3.$ Recall that $A:=\langle a,h_{12}h_{34}\rangle $ is isomorphic
to $(\mathbb{Z}/2)^{2}.$ By Smith theory, $(\mathbb{Z}/2)^{2}$ cannot act
freely and thus $\mathrm{Fix}(h_{12}h_{34})$ is not empty. Bredon's result
(cf. Lemma \ref{Bredon}) shows that $\mathrm{Fix}(h_{12}h_{34})$ is of even
dimension. If $\dim _{2}\mathrm{Fix}(h_{12}h_{34})=3,$ $h_{12}h_{34}$ acts
trivially on $M.$ The normal subgroup in $\mathrm{St}_{n}(\mathbb{Z})$
generated by $h_{12}h_{34}$ contains $a,$ which is a contradiction to the
fact that that $\mathrm{Fix}(a)=\emptyset .$ Therefore, $\mathrm{Fix}%
(h_{12}h_{34})=S^{1}.$ Note that the quaternion group $\langle
h_{12},h_{13}\rangle $ commutes with $h_{12}h_{34}.$ Since $a$ acts freely
on $\mathrm{Fix}(h_{12}h_{34})$, so does $\langle h_{12},h_{13}\rangle ,$
which is impossible as well.

When $k=5,$ take $B=\langle a\cdot h_{12}h_{34},a\cdot h_{34}h_{56}\rangle
\cong (\mathbb{Z}/2)^{2}.$ Since $\mathrm{Fix}(h_{12}h_{34})$ is a
generalized sphere over $\mathbb{Z}/2,$ $\langle a,ah_{34}h_{56}\rangle $
can not act freely on it. Therefore, $\mathrm{Fix}(B)\neq \emptyset .$ By
(4), we have%
\begin{equation*}
k-r^{\prime }=3(n(\langle ah_{12}h_{34}\rangle )-r^{\prime }).
\end{equation*}%
If $n(\langle ah_{12}h_{34}\rangle )=r^{\prime },$ we have $k=r^{\prime }$.
Thus $ah_{12}h_{34}$ acts trivially on $M.$ The normal subgroup in $\mathrm{%
St}_{n}(\mathbb{Z})$ generated by $ah_{12}h_{34}$ contains $a,$ a
contradiction. Therefore, $r^{\prime }=2$ and $n(\langle
ah_{12}h_{34}\rangle )-r^{\prime }=1$. By Lemma \ref{bv4.11}, $a$ permutes
the two components of $\mathrm{Fix}(ah_{12}h_{34})\backslash \mathrm{Fix}%
(B), $ which is impossible by noting that $h_{12}^{2}=a.$

When $k=7,$ we may assume $n(\langle ah_{12}h_{34}\rangle )\neq r^{\prime }$
as in the proof of the case when $k=5.$ Considering formula (2), we have
either $r^{\prime }=4$, $n(\langle ah_{12}h_{34}\rangle =5$ or $r^{\prime
}=1,n(\langle ah_{12}h_{34}\rangle =3.$ For the former, apply Lemma \ref%
{bv4.11} to get a contradiction. For the latter, the quaternion group $%
\langle h_{12},h_{13}\rangle $ acts on $\mathrm{Fix}(B)=S^{1}$ freely, which
is impossible as the case of $k=3.$

Suppose that $k\geq 9.$ If $k-r^{\prime }\geq 6,$ the subgroup $\mathrm{St}%
_{n-6}(\mathbb{Z})$ generating by all $x_{ij}(r)$ $(7\leq i\neq j\leq n)$
acts trivially on $\mathrm{Fix}(B)$ by an inductive argument. This is a
contradiction to the fact that $\mathrm{Fix}(a)=\emptyset .$ If $k-r^{\prime
}\leq 5,$ $n(\langle h_{12}h_{34}\rangle )-r^{\prime }=0$ or $1$. If $%
n(\langle h_{12}h_{34}\rangle )=r^{\prime },$ $k=r^{\prime }$ and thus $%
\mathrm{Fix}(B)=M^{k}.$ Then $h_{12}h_{34}$ acts trivially on $M.$ The
normal subgroup generated by $h_{12}h_{13}$ contains $a$ (cf. Lemma \ref{no}%
), which is a contradiction to the fact $\mathrm{Fix}(a)=\emptyset $. If $%
n(\langle h_{12}h_{34}\rangle )-r^{\prime }=1,$ the element $a$ permutes $%
\mathrm{Fix}(h_{12}h_{34})\backslash \mathrm{Fix}(B)$ by Lemma \ref{bv4.11}.
This is impossible by noting that $h_{12}^{2}=a.$ The whole proof is
finished.
\end{enumerate}
\end{proof}

\begin{corollary}
\label{cor}Any group homomorphism $f:\mathrm{St}_{n}(\mathbb{Z})\rightarrow 
\mathrm{GL}_{k}(\mathbb{Z})$ $(n\geq 3,k\leq n-1)$ is trivial.
\end{corollary}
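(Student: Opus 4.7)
The plan is to deduce the corollary directly from Theorem \ref{th4}(2) by realising $\mathrm{GL}_{k}(\mathbb{Z})$ as a group of homeomorphisms of a $\mathbb{Z}/2$-acyclic homology manifold, namely $\mathbb{R}^{k}$. The standard linear inclusion $\mathrm{GL}_{k}(\mathbb{Z})\hookrightarrow \mathrm{Homeo}(\mathbb{R}^{k})$ is faithful, since any non-identity integer matrix moves at least one standard basis vector. So, given any homomorphism $f:\mathrm{St}_{n}(\mathbb{Z})\to \mathrm{GL}_{k}(\mathbb{Z})$, it is enough to show that the composition
\begin{equation*}
\mathrm{St}_{n}(\mathbb{Z})\xrightarrow{\ f\ }\mathrm{GL}_{k}(\mathbb{Z})\longrightarrow \mathrm{Homeo}(\mathbb{R}^{k})
\end{equation*}
is the trivial action.

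The first step is to verify that $\mathbb{R}^{k}$ satisfies the hypotheses of Theorem \ref{th4}(2). It is a topological $k$-manifold, hence a $k$-hm$_{\mathbb{Z}/2}$, and it is contractible, so its Borel--Moore homology with $\mathbb{Z}/2$ coefficients is concentrated in degree $0$; in particular $\mathbb{R}^{k}$ is a $\mathbb{Z}/2$-acyclic $k$-hm$_{\mathbb{Z}/2}$. Next, for $n\geq 4$ and $k\leq n-1$, Theorem \ref{th4}(2) immediately implies that the induced action of $\mathrm{St}_{n}(\mathbb{Z})$ on $\mathbb{R}^{k}$ is trivial, that is, $f(g)$ fixes $\mathbb{R}^{k}$ pointwise for every $g\in \mathrm{St}_{n}(\mathbb{Z})$. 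Faithfulness of the linear action of $\mathrm{GL}_{k}(\mathbb{Z})$ on $\mathbb{R}^{k}$ then forces $f(g)=I_{k}$ for every $g$, so $f$ is trivial.

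The only cases not covered by the above reduction are the boundary cases $n=3$ with $k=1$ or $k=2$. When $k=1$ the target $\mathrm{GL}_{1}(\mathbb{Z})=\{\pm 1\}$ is abelian, while $\mathrm{St}_{n}(\mathbb{Z})$ is perfect for $n\geq 3$, so $f$ is automatically trivial. For $n=3,\,k=2$ one appeals to the classical fact that $\mathrm{SL}_{3}(\mathbb{Z})$ has no nontrivial homomorphism into $\mathrm{GL}_{2}(\mathbb{Z})$ (its unipotent subgroup $\langle e_{12}(1),e_{13}(1),e_{23}(1)\rangle $ is a torsion-free $2$-step nilpotent group that does not embed in $\mathrm{GL}_{2}(\mathbb{Z})$), combined with the centrality of $a$ in $\mathrm{St}_{3}(\mathbb{Z})$ from Lemma \ref{mi} to push the argument up from $\mathrm{SL}_{3}(\mathbb{Z})$ to $\mathrm{St}_{3}(\mathbb{Z})$.

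I do not expect any serious obstacle: the content of the corollary is entirely concentrated in Theorem \ref{th4}(2), and the reduction to that theorem is transparent once one observes that $\mathrm{GL}_{k}(\mathbb{Z})$ acts faithfully and linearly on the contractible manifold $\mathbb{R}^{k}$. The mildest subtlety is simply matching the topological hypothesis ``$\mathbb{Z}/2$-acyclic $k$-hm$_{\mathbb{Z}/2}$'' against the familiar contractible manifold $\mathbb{R}^{k}$, which is immediate from the definitions in Section 2.
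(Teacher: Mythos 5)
Your main reduction is exactly the paper's: you let $\mathrm{GL}_{k}(\mathbb{Z})$ act linearly and faithfully on $\mathbb{R}^{k}$, observe that $\mathbb{R}^{k}$ is a $\mathbb{Z}/2$-acyclic $k$-hm$_{\mathbb{Z}/2}$, and quote Theorem \ref{th4}(2) whenever it applies (the paper invokes it for $k\geq 3$, you for $n\geq 4$; these cover the same ground), with the $k=1$ case disposed of by perfectness of $\mathrm{St}_{n}(\mathbb{Z})$. That part is correct.

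The gap is in your residual case $n=3$, $k=2$. First, non-embeddability of the integral Heisenberg group $\langle e_{12}(1),e_{13}(1),e_{23}(1)\rangle$ in $\mathrm{GL}_{2}(\mathbb{Z})$ only rules out homomorphisms $\mathrm{SL}_{3}(\mathbb{Z})\rightarrow \mathrm{GL}_{2}(\mathbb{Z})$ that are injective on that subgroup; to exclude the non-injective ones you still need further input (e.g.\ the normal subgroup theorem or the congruence subgroup property to force the image to be finite, together with the solvability of every finite subgroup of $\mathrm{GL}_{2}(\mathbb{Z})$). Second, ``combined with the centrality of $a$ \dots to push the argument up from $\mathrm{SL}_{3}(\mathbb{Z})$ to $\mathrm{St}_{3}(\mathbb{Z})$'' is not yet an argument: if $f(a)\neq 1$ the map does not factor through $\mathrm{SL}_{3}(\mathbb{Z})$, and $\langle f(a)\rangle$ is normal only in the image, not in $\mathrm{GL}_{2}(\mathbb{Z})$, so nothing descends automatically. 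The paper sidesteps both issues at once: since $\mathrm{St}_{n}(\mathbb{Z})$ is perfect, $\mathrm{Im}\,f$ is a perfect subgroup of $\mathrm{SL}_{2}(\mathbb{Z})$, and because $\mathrm{PSL}_{2}(\mathbb{Z})\cong \mathbb{Z}/2\ast \mathbb{Z}/3$ the group $\mathrm{SL}_{2}(\mathbb{Z})$ has no nontrivial perfect subgroup; hence $\mathrm{Im}\,f=1$. This applies verbatim to $\mathrm{St}_{n}(\mathbb{Z})$ for every $n\geq 3$ when $k=2$, with no descent to $\mathrm{SL}_{3}(\mathbb{Z})$ required, and is what your $n=3$, $k=2$ paragraph should be replaced by (or completed with).
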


\begin{proof}
When $k=1,$ $\mathrm{GL}_{k}(\mathbb{Z})$ is abelian. Since $\mathrm{St}_{n}(%
\mathbb{Z})$ is perfect, $f$ is trivial. When $k=2,$ $f$ has its image in $%
\mathrm{SL}_{2}(\mathbb{Z}).$ Note that the projective linear group $\mathrm{%
PSL}_{2}(\mathbb{Z})=\mathbb{Z}/2\ast \mathbb{Z}/3,$ a free product. Thus $%
\mathrm{SL}_{2}(\mathbb{Z})$ does not have nontrivial perfect subgroup (cf. 
\cite{ber}, 5.8, p.48). This means that $f$ is trivial. The group $\mathrm{GL%
}_{k}(\mathbb{Z})$ acts naturally on the Euclidean space $\mathbb{R}^{k}$ by
linear transformations. When $k\geq 3,$ Theorem \ref{th4} implies that the
image $\func{Im}f$ acts trivially on $\mathbb{R}^{k}.$ Therefore, $\func{Im}%
f=1.$
\end{proof}

\bigskip

\section{Proof of Theorem \protect\ref{th1}}

We need the following lemma.

\begin{lemma}
\label{cs}Denote by $Q$ a quotient group of $\mathrm{SL}_{n}(\mathbb{Z}).$
Let $\pi $ be a torsion-free abelian group. For any $n\geq 3$, the second
cohomology group 
\begin{equation*}
H^{2}(Q;\pi )=0.
\end{equation*}
\end{lemma}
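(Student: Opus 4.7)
The plan is to derive the vanishing from the five-term inflation--restriction sequence attached to
\[
1 \longrightarrow N \longrightarrow \mathrm{SL}_n(\mathbb{Z}) \longrightarrow Q \longrightarrow 1,
\]
where $N$ is the kernel of the quotient map and $\pi$ is regarded as a trivial coefficient module. Writing $G=\mathrm{SL}_n(\mathbb{Z})$, this gives
\[
0 \to H^1(Q;\pi) \to H^1(G;\pi) \to H^1(N;\pi)^{Q} \to H^2(Q;\pi) \to H^2(G;\pi).
\]

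I first dispose of the outer four terms. Both $H^1$ groups vanish because $G$ is perfect for $n\ge 3$, and hence so is every quotient $Q$, giving $H^1(-;\pi)=\mathrm{Hom}(-,\pi)=0$ in both cases. For $H^2(G;\pi)$ I would invoke the universal coefficient theorem,
\[
H^2(G;\pi) \cong \mathrm{Hom}\bigl(H_2(G;\mathbb{Z}),\pi\bigr) \oplus \mathrm{Ext}\bigl(H_1(G;\mathbb{Z}),\pi\bigr);
\]
the Ext summand vanishes because $H_1(G;\mathbb{Z})=0$, while the Hom summand vanishes because $H_2(G;\mathbb{Z})=K_2(\mathbb{Z})=\mathbb{Z}/2$ (cf.\ Lemma \ref{mi}) is torsion whereas $\pi$ is torsion-free. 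The surviving content of the exact sequence is therefore the isomorphism
\[
H^2(Q;\pi) \;\cong\; H^1(N;\pi)^Q \;=\; \mathrm{Hom}(N,\pi)^Q \;=\; \mathrm{Hom}\bigl(N/[G,N],\pi\bigr),
\]
the last identification reflecting that a homomorphism $N\to\pi$ into a trivial module is $G$-invariant precisely when it kills $[G,N]$.

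It thus suffices to show that $N/[G,N]$ is a torsion abelian group, whereupon the Hom vanishes by torsion-freeness of $\pi$. Here the plan is to use the Bass--Lazard--Serre--Mennicke normal subgroup theorem for $\mathrm{SL}_n(\mathbb{Z})$ with $n\ge 3$: either $N\subseteq Z(G)\subseteq\{\pm I\}$ (in which case $N$ is already finite), or $N$ contains a principal congruence subgroup $\mathrm{SL}_n(\mathbb{Z},m\mathbb{Z})$. In the latter case the commutator identity $[e_{ij}(1),e_{jk}(m)]=e_{ik}(m)$ for distinct $i,j,k$ places every elementary matrix $e_{ik}(m)$ inside $[G,\mathrm{SL}_n(\mathbb{Z},m\mathbb{Z})]\subseteq [G,N]$; since these matrices generate $\mathrm{SL}_n(\mathbb{Z},m\mathbb{Z})$ for $n\ge 3$ (by Suslin's identification of the principal congruence subgroup with the elementary subgroup), we obtain $\mathrm{SL}_n(\mathbb{Z},m\mathbb{Z})\subseteq[G,N]$, so $N/[G,N]$ embeds into the finite group $\mathrm{SL}_n(\mathbb{Z}/m)$.

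The main obstacle lies entirely in this last step: everything preceding is formal manipulation of the five-term sequence and the universal coefficient theorem, while the torsion property of $N/[G,N]$ packages two substantial external ingredients -- the normal subgroup theorem and Suslin's identification $E(n,m)=\mathrm{SL}_n(\mathbb{Z},m\mathbb{Z})$ -- into a single commutator computation.
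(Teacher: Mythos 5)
Your argument is correct, but it takes a genuinely different route from the paper's. The paper first pins down what $Q$ can be: by Margulis' normal subgroup theorem and the congruence subgroup property, $Q$ is either $\mathrm{SL}_n(\mathbb{Z})$ itself or a central quotient of some $\mathrm{SL}_n(\mathbb{Z}/k)$; it then kills $H^2(\mathrm{SL}_n(\mathbb{Z});\pi)$ using van der Kallen's computation of the Schur multiplier, kills $H^2(\mathrm{SL}_n(\mathbb{Z}/k);\pi)$ using Dennis--Stein's, and finishes with the five-term sequence of $1\to K\to\mathrm{SL}_n(\mathbb{Z}/k)\to Q\to 1$. You instead run the five-term sequence of $1\to N\to\mathrm{SL}_n(\mathbb{Z})\to Q\to 1$ directly, which reduces everything to the single assertion that $N/[G,N]$ is torsion; that in turn follows from the normal subgroup theorem together with the fact that $\mathrm{SL}_n(\mathbb{Z},m\mathbb{Z})$ is normally generated by the elementary matrices $e_{ik}(m)$ for $n\ge 3$ --- a fact the paper itself invokes (for $m=2$, and in its Steinberg form) in Lemma \ref{no}. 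Your route buys two things: it bypasses the Dennis--Stein computation of $H_2(\mathrm{SL}_n(\mathbb{Z}/k))$ entirely, and it treats all quotients uniformly, including $Q=\mathrm{PSL}_n(\mathbb{Z})$ for $n$ even (the case $N=\{\pm I\}$), which the paper's dichotomy ``$\ker f$ trivial, or $Q$ a central quotient of $\mathrm{SL}_n(\mathbb{Z}/k)$'' does not literally cover; in your setup that case is disposed of instantly because $\mathrm{Hom}(N,\pi)=0$ for $N$ finite. Two small inaccuracies, neither of which affects the argument: $H_2(\mathrm{SL}_n(\mathbb{Z});\mathbb{Z})$ equals $(\mathbb{Z}/2)^2$ for $n=3,4$ (van der Kallen), not $\mathbb{Z}/2$, and Lemma \ref{mi} only exhibits a $\mathbb{Z}/2$ central extension rather than computing the multiplier --- but all you need is that $H_2$ is finite, which holds; and the identification of $\mathrm{SL}_n(\mathbb{Z},m\mathbb{Z})$ with the relative elementary subgroup for $n\ge 3$ is due to Mennicke and Bass--Milnor--Serre rather than Suslin.
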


\begin{proof}
By van der Kallen \cite{van}, the second homology group $H_{2}(\mathrm{SL}%
_{n}(\mathbb{Z});\mathbb{Z})=\mathbb{Z}/2$ when $n\geq 5$ and 
\begin{equation*}
H_{2}(\mathrm{SL}_{3}(\mathbb{Z});\mathbb{Z})=H_{2}(\mathrm{SL}_{4}(\mathbb{Z%
});\mathbb{Z})=\mathbb{Z}/2\tbigoplus \mathbb{Z}/2.
\end{equation*}%
Since $\mathrm{SL}_{n}(\mathbb{Z})$ is perfect, $H_{1}(\mathrm{SL}_{n}(%
\mathbb{Z});\mathbb{Z})=0$ for any $n\geq 3.$ By the universal coefficient
theorem, $H^{2}(\mathrm{SL}_{n}(\mathbb{Z});\pi )=0$ for any $n\geq 3.$
Dennis and Stein proved that%
\begin{equation*}
H_{2}(\mathrm{SL}_{n}(\mathbb{Z}/k);\mathbb{Z})=\mathbb{Z}/2,\text{ for }%
k\equiv 0(\mathrm{mod}4),
\end{equation*}%
while $H_{2}(\mathrm{SL}_{n}(\mathbb{Z}/k);\mathbb{Z})=0$, otherwise (cf. 
\cite{ds} corollary 10.2). By the universal coefficient theorem again, $%
H^{2}(\mathrm{SL}_{n}(\mathbb{Z}/k);\pi )=0$ for any $k.$ Let $f:\mathrm{SL}%
_{n}(\mathbb{Z})\rightarrow Q$ be a surjective homomorphism. By Margulis'
normal subgroup theorem, every quotient $Q$ is either $\mathrm{PSL}_{n}(%
\mathbb{Z})$ or a finite group. If $\ker f$ is trivial, $Q=\mathrm{SL}_{n}(%
\mathbb{Z})$ and thus $H^{2}(Q;\pi )=0.$ If $\ker f$ is nontrivial, the
congruence subgroup property \cite{bsm} implies that $Q$ is a quotient of $%
\mathrm{SL}_{n}(\mathbb{Z}/k)$ by a central subgroup $K$ for some non-zero
integer $k.$ From the Serre spectral sequence 
\begin{equation*}
H^{p}(Q;H^{q}(K;\pi ))\Longrightarrow H^{p+q}(\mathrm{SL}_{n+1}(\mathbb{Z}%
/k);\pi ),
\end{equation*}%
we have the exact sequence%
\begin{eqnarray*}
0 &\rightarrow &H^{1}(Q;\pi )\rightarrow H^{1}(\mathrm{SL}_{n}(\mathbb{Z}%
/k);\pi )\rightarrow H^{0}(Q;H^{1}(K;\pi )) \\
&\rightarrow &H^{2}(Q;\pi )\rightarrow H^{2}(\mathrm{SL}_{n}(\mathbb{Z}%
/k);\pi ).
\end{eqnarray*}%
This implies $H^{2}(Q;\pi )=0.$
\end{proof}

\bigskip

\begin{proof}[Proof of Theorem \protect\ref{th1}]
If $\mathrm{SL}_{n}(\mathbb{Z})$ acts trivially on $M^{r},$ it is obvious
that the induced homomorphism $\mathrm{SL}_{n}(\mathbb{Z})\rightarrow 
\mathrm{Out}(\pi _{1}(M))$ is trivial. It is enough to prove the other
direction. Denote by $\mathrm{Homeo}(M^{r})$ the group of homeomorphisms of $%
M^{r}.$ Suppose that $f:\mathrm{SL}_{n}(\mathbb{Z})\rightarrow \mathrm{Homeo}%
(M^{k})$ is the group homomorphism. We have a group extension%
\begin{equation}
1\rightarrow \pi _{1}(M)\rightarrow G^{\prime }\rightarrow \func{Im}%
f\rightarrow 1,  \tag{*}
\end{equation}%
where $\tilde{M}$ is the universal cover of $M$ and $G^{\prime }$ is a
subgroup of $\mathrm{Homeo}(\tilde{M}).$ Note that there is a one-to-one
correspondence between the equivalence classes of extensions and the second
cohomology group $H^{2}(\func{Im}f;C(\pi ))$, where $C(\pi )$ is the center
of $\pi _{1}(M)$ (see \cite{Br}, Theorem 6.6, p.105). By the assumption that
the group homomorphism $\mathrm{SL}_{n}(\mathbb{Z})\rightarrow \mathrm{Out}%
(\pi _{1}(M))$ is trivial, $\func{Im}f$ acts trivially on the center $C(\pi
).$ Since $M$ is aspherical, $\pi =\pi _{1}(M)$ is torsion-free and the
center $C(\pi )$ is torsionfree as well. By Lemma \ref{cs}, $H^{2}(\func{Im}%
f;C(\pi ))=0,$ which implies that the exact sequence (*) is split.
Therefore, $\func{Im}f$ is isomorphic to a subgroup of $G^{\prime },$ which
implies that the group $\mathrm{SL}_{n}(\mathbb{Z})$ and thus $\mathrm{St}%
_{n}(\mathbb{Z})$ could act on the acyclic manifold $\tilde{M}$ through $%
\func{Im}f.$ However, Bridson and Vogtmann \cite{bv} prove that any action $%
\mathrm{SL}_{n}(\mathbb{Z})$ on $\tilde{M}$ is trivial. (for
self-containedness, we may apply Theorem \ref{th4} to get that any group
action of $\mathrm{St}_{n}(\mathbb{Z})$ $(n\geq 4)$ on the acyclic manifold $%
\tilde{M}$ is trivial. When $n=3,$ for each integer $2\leq i\leq n,$ denote
by $A_{i}$ the diagonal matrix $\mathrm{diag}(-1,\cdots ,-1,\cdots ,1)$,
where the second $-1$ is the $i$-th entry$.$ The subgroup $G:=\langle
A_{2},A_{3}\rangle <\mathrm{SL}_{3}(\mathbb{Z})$ is isomorphic to $(\mathbb{Z%
}/2)^{2}.$ By Smith theory (cf. Lemma \ref{locsmith}), the group action of $%
G $ on $\tilde{M}$ has a global fixed point. The Borel formula (cf. Lemma %
\ref{borel}) implies that the action of $G$ on $\tilde{M}$ is trivial.
Therefore, the group action of $\mathrm{SL}_{3}(\mathbb{Z})$ on $\tilde{M}$
factors through the projective linear group $\mathrm{PSL}_{3}(\mathbb{Z}/2).$
Using Smith theory and the Borel formula once again for the subgroup $(%
\mathbb{Z}/2)^{2}\cong \langle e_{12}(1),e_{13}(1)\rangle <\mathrm{PSL}_{3}(%
\mathbb{Z}/2),$ we see that the group action of $\mathrm{PSL}_{3}(\mathbb{Z}%
/2)$ and thus $\mathrm{SL}_{3}(\mathbb{Z})$ on $\tilde{M}$ is trivial.) This
implies $\func{Im}f$ is trivial, i.e. the group action of $\mathrm{SL}_{n}(%
\mathbb{Z})$ on $M$ is trivial.
\end{proof}

\section{Aspherical manifolds with nilpotent fundamental groups}

Let $G$ be a group. Denote by $Z_{1}=Z(G),$ the center. Inductively, define $%
Z_{i+1}(G)=p_{i}^{-1}Z(G/Z_{i-1}(G)),$ where $p_{i}:G\rightarrow G/Z_{i}(G)$
is the quotient group homomorphism. We have the upper central sequence%
\begin{equation*}
1\subset Z_{1}\subset Z_{2}\subset ...\subset Z_{i}\subset ....
\end{equation*}%
If $Z_{i}=G$ for some $i,$ we call $G$ a nilpotent group. For two groups $G$
and $H,$ denote by $\mathrm{Hom}(G,H)$ the set of group homomorphisms from $%
G $ to $H.$

\begin{lemma}
\label{berrick}Let 
\begin{equation*}
1\rightarrow N\rightarrow \pi \overset{q}{\rightarrow }Q\rightarrow 1
\end{equation*}%
be a central extension, i.e. an exact sequence with $N<Z(\pi ).$ Suppose
that $G$ is a group with the second cohomology group $H^{2}(G;N)=0,$ where $%
G $ acts on $N$ trivially. Then%
\begin{equation*}
\mathrm{Hom}(G,\pi )\overset{q_{\ast }}{\rightarrow }\mathrm{Hom}(G,Q)
\end{equation*}%
is surjective.
\end{lemma}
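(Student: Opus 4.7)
The plan is to prove surjectivity by a pullback-and-splitting argument. Given any $\phi \in \mathrm{Hom}(G,Q)$, I would construct a lift $\tilde{\phi} \in \mathrm{Hom}(G,\pi)$ with $q\circ\tilde{\phi}=\phi$ by pulling back the given central extension along $\phi$ and then invoking the cohomological hypothesis to split the pullback.

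Concretely, first I would form the fiber product
\begin{equation*}
\pi' = G \times_Q \pi = \{(g,p) \in G \times \pi : \phi(g) = q(p)\},
\end{equation*}
together with its two projections $\mathrm{pr}_G : \pi' \to G$ and $\mathrm{pr}_\pi : \pi' \to \pi$. A routine check shows that $\mathrm{pr}_G$ is surjective with kernel $\{1\} \times N \cong N$, so one obtains a short exact sequence $1 \to N \to \pi' \to G \to 1$. Moreover, since $N$ is central in $\pi$, the subgroup $\{1\}\times N$ is central in $\pi'$, so this is a central extension. The conjugation action of $G$ on $N$ induced by the extension coincides with the hypothesized trivial action (by centrality of $N$ in $\pi$).

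Next I would invoke the classification of central extensions: equivalence classes of central extensions of $G$ by $N$, with a given $G$-action on $N$, are in bijection with $H^2(G;N)$ (see e.g.\ \cite{Br}, Theorem 6.6, p.105). Since the action on $N$ is trivial and $H^2(G;N)=0$ by hypothesis, the extension $1 \to N \to \pi' \to G \to 1$ is equivalent to the trivial extension, hence split. A splitting is a homomorphism $s : G \to \pi'$ with $\mathrm{pr}_G \circ s = \mathrm{id}_G$. Setting $\tilde{\phi} = \mathrm{pr}_\pi \circ s : G \to \pi$ gives the desired lift, since for every $g \in G$, writing $s(g) = (g, p_g)$ with $\phi(g) = q(p_g)$, one has $q(\tilde{\phi}(g)) = q(p_g) = \phi(g)$, i.e.\ $q_* \tilde{\phi} = \phi$.

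There is no real obstacle here; the entire argument is a standard application of the $H^2$-classification of central extensions, and the only thing requiring any care is the verification that the induced $G$-action on $N$ in the pullback matches the trivial action assumed in the hypothesis. Once that is recorded, the vanishing of $H^2(G;N)$ directly produces the splitting, and the projection onto the $\pi$-factor delivers the lift.
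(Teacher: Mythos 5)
Your proposal is correct, but it follows a different (purely algebraic) route from the paper. You pull back the extension along $\phi$ to form the fiber product $G\times_{Q}\pi$, observe that the resulting extension of $G$ by $N$ is central with the trivial action, invoke the bijection between equivalence classes of such extensions and $H^{2}(G;N)$ to split it, and project the splitting to $\pi$. The paper instead argues homotopy-theoretically: the central extension yields a fibration $\mathrm{B}\pi\rightarrow \mathrm{B}Q\overset{h}{\rightarrow}K(N,2)$, and for $\alpha:G\rightarrow Q$ the composite $\mathrm{B}G\rightarrow \mathrm{B}Q\rightarrow K(N,2)$ is null-homotopic because $[\mathrm{B}G,K(N,2)]\cong H^{2}(G;N)=0$, so $\mathrm{B}\alpha$ lifts to $\mathrm{B}\pi$ and hence $\alpha$ lifts to $\pi$. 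The two arguments are two dressings of the same obstruction: the class of your pullback extension in $H^{2}(G;N)$ is exactly $\alpha^{\ast}$ of the class of the original extension, which is what the paper's composite $h\circ\mathrm{B}\alpha$ represents. Your version is more elementary and self-contained (it uses only the $H^{2}$-classification of central extensions, which the paper already cites from Brown's book in the proof of Theorem \ref{th1}), at the cost of writing out the fiber-product verifications; the paper's version is shorter once classifying-space machinery is taken for granted. One small point worth recording explicitly in your write-up: the $H^{2}$-classification requires $N$ abelian, which holds automatically since $N<Z(\pi)$.
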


\begin{proof}
This is an easy exercise in homological algebra. For completeness, we give a
proof here. The central extension gives a principal fibration $\mathrm{B}%
N\rightarrow \mathrm{B}\pi \rightarrow \mathrm{B}Q$ and thus a fibration 
\begin{equation*}
\mathrm{B}\pi \rightarrow \mathrm{B}Q\overset{h}{\rightarrow }K(N,2),
\end{equation*}%
where $\mathrm{B}(-)$ is a classifying space and $K(N,2)$ is a simply
connected CW complex with the second homotopy group $N$ and all other
homotopy groups trivial (cf. \cite{ber}, 8.2, p.64). Let $\alpha
:G\rightarrow Q$ be any group homomorphism. The composite 
\begin{equation*}
\mathrm{B}G\overset{\mathrm{B}\alpha }{\rightarrow }\mathrm{B}Q\overset{h}{%
\rightarrow }K(N,2)
\end{equation*}%
is null-homotopic, by the assumption that $H^{2}(G;N)=0$. Therefore, $\alpha 
$ could be lifted to be a group homomorphism $\alpha ^{\prime }:G\rightarrow
\pi .$
\end{proof}

\begin{lemma}
\label{ni}Let $\pi $ be a group with center $Z=Z(\pi ).$ Suppose that one of
the following holds:

\begin{enumerate}
\item[(i)] $G$ is a perfect group with $H_{2}(G;\mathbb{Z})$ finite, $\pi $
and $\pi /Z$ are torsion-free; or

\item[(ii)] $G$ is a perfect group with $H_{2}(G;\mathbb{Z})=0$.

If the set of group homomorphisms 
\begin{equation*}
\mathrm{Hom}(G,\mathrm{Aut}(Z))=1\text{ and }\mathrm{Hom}(G,\mathrm{Out}(\pi
/Z))=1,
\end{equation*}
then 
\begin{equation*}
\mathrm{Hom}(G,\mathrm{Out}(\pi ))=1.
\end{equation*}%
Here $1$ denote the trivial group homomorphism.
\end{enumerate}
\end{lemma}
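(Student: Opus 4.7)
My plan is to analyze $\mathrm{Out}(\pi)$ through the natural maps induced by the characteristic central subgroup $Z = Z(\pi)$ and to show that the image of any homomorphism $\varphi : G \to \mathrm{Out}(\pi)$ lies inside an abelian subgroup; since $G$ is perfect, this will force $\varphi$ to be trivial.

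First I would set up the two canonical homomorphisms
\[
p : \mathrm{Out}(\pi) \longrightarrow \mathrm{Aut}(Z), \qquad q : \mathrm{Out}(\pi) \longrightarrow \mathrm{Out}(\pi/Z).
\]
Both are well defined because $Z$ is characteristic, inner automorphisms of $\pi$ restrict to the identity on $Z$, and inner automorphisms of $\pi$ descend to inner automorphisms of $\pi/Z$. By the two hypotheses $\mathrm{Hom}(G,\mathrm{Aut}(Z))=1$ and $\mathrm{Hom}(G,\mathrm{Out}(\pi/Z))=1$, both $p\circ\varphi$ and $q\circ\varphi$ are trivial, so $\varphi(G) \subset K := \ker p \cap \ker q$.

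Next I would prove that $K$ is abelian. Given $[\alpha]\in K$, the induced automorphism $\bar\alpha$ of $\pi/Z$ is inner, say $\bar\alpha = \mathrm{inn}_{\bar y}$; lifting $\bar y$ to $y\in\pi$ and replacing $\alpha$ by $\mathrm{inn}_y^{-1}\circ\alpha$ produces a representative with $\bar\alpha = \mathrm{id}_{\pi/Z}$ and still $\alpha|_Z = \mathrm{id}_Z$ (the latter because inner automorphisms act trivially on $Z$). Such a central automorphism has the form $\alpha(x) = x\cdot c(x)$ where $c : \pi/Z \to Z$ is a group homomorphism, and the computation
\[
\alpha_1\alpha_2(x) = \alpha_1(x)\,\alpha_1(c_2(x)) = x\,c_1(x)\,c_2(x),
\]
using $c_2(x)\in Z$ and $\alpha_1|_Z=\mathrm{id}$, shows that central automorphisms commute pairwise. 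They therefore form an abelian subgroup of $\mathrm{Aut}(\pi)$ whose image in $\mathrm{Out}(\pi)$ is precisely $K$, so $K$ is abelian.

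Finally, because $G$ is perfect, the restricted homomorphism $\varphi : G \to K$ factors through $G^{\mathrm{ab}} = 1$, so $\varphi = 1$, proving $\mathrm{Hom}(G,\mathrm{Out}(\pi))=1$. I expect the most delicate point to be verifying that the modification of $\alpha$ by an inner automorphism stays inside $\ker p \cap \ker q$, but this reduces to the observation that inner automorphisms of $\pi$ always act trivially on $Z$ and descend to inner automorphisms of $\pi/Z$. The cohomological assumptions on $H_2(G;\mathbb{Z})$ and the torsion-freeness hypotheses in (i) and (ii) do not seem strictly needed for the direct argument above, but they would appear naturally in an alternative lifting strategy in which one first lifts $\varphi$ to a homomorphism into $\mathrm{Aut}(\pi)$ using the central extension $1 \to Z \to \pi \to \pi/Z \to 1$ together with Lemma \ref{berrick}.
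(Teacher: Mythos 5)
Your proof is correct, and it takes a genuinely different route from the paper's. The paper reduces to showing $\mathrm{Hom}(G,\ker g)=1$ for $g:\mathrm{Out}(\pi)\rightarrow \mathrm{Out}(\pi /Z)$, extracts via the snake lemma a central extension $1\rightarrow Z(\pi /Z)\rightarrow \ker f\rightarrow \ker g\rightarrow 1$ with $f:\mathrm{Aut}(\pi )\rightarrow \mathrm{Aut}(\pi /Z)$, lifts a homomorphism $G\rightarrow \ker g$ to $G\rightarrow \ker f$ using $H^{2}(G;Z(\pi /Z))=0$ (this is exactly where hypotheses (i)/(ii) enter, via Lemma \ref{berrick} and the universal coefficient theorem), and only then invokes $\mathrm{Hom}(G,\mathrm{Aut}(Z))=1$ to push the image into $\ker F\cap \ker f\cong H^{1}(\pi /Z;Z)$, which is abelian. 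You instead stay entirely inside $\mathrm{Out}(\pi )$: you observe that $\varphi (G)$ lands in $K=\ker p\cap \ker q$ and prove directly that $K$ is abelian, since every class in $K$ admits a representative in the subgroup $C$ of automorphisms fixing $Z$ pointwise and inducing the identity on $\pi /Z$, and your computation with the cocycles $c_{i}:\pi /Z\rightarrow Z$ shows $C$ is abelian, so $K$, being the homomorphic image of $C$ in $\mathrm{Out}(\pi )$, is abelian as well; perfectness of $G$ then finishes. Your argument is more elementary (no lifting, no group cohomology beyond the identification of central automorphisms) and in fact establishes a slightly stronger statement: the conclusion holds for any perfect $G$ satisfying the two Hom-vanishing hypotheses, with conditions (i) and (ii) on $H_{2}(G;\mathbb{Z})$ and torsion-freeness being unnecessary. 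What the paper's lifting strategy buys is a template that recurs later (in Lemma \ref{snil} and Theorem \ref{th3}) where the relevant kernel is not obviously abelian and one genuinely needs $H^{2}(G;-)=0$ to lift through a central extension; for this particular lemma your shortcut is cleaner.
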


\begin{proof}
Considering the quotient group homomorphism $\pi \rightarrow \pi /Z$, we
have the following commutative diagram%
\begin{equation*}
\begin{array}{ccccc}
1\rightarrow & \mathrm{Inn}(\pi )\rightarrow & \mathrm{Aut}(\pi )\rightarrow
& \mathrm{Out}(\pi ) & \rightarrow 1 \\ 
& \downarrow & \downarrow f & \downarrow g &  \\ 
1\rightarrow & \mathrm{Inn}(\pi /Z)\rightarrow & \mathrm{Aut}(\pi
/Z)\rightarrow & \mathrm{Out}(\pi /Z) & \rightarrow 1.%
\end{array}%
\end{equation*}%
Note that $\mathrm{Inn}(\pi )=\pi /Z.$ By the snake lemma for groups (cf. 
\cite{mg}, 11.8, p.363), the following sequence is exact%
\begin{equation}
1\rightarrow Z(\pi /Z)\rightarrow \ker f\rightarrow \ker g\rightarrow 1. 
\tag{5}
\end{equation}%
By diagram chase, the action of $\ker g$ on the center $Z(\pi /Z)$ is by
inner automorphisms of $\pi /Z$ and thus trivial. This proves that the
previous exact sequence (5) is a central extension. Since 
\begin{equation*}
\mathrm{Hom}(G,\mathrm{Out}(\pi /Z))=1
\end{equation*}%
by assumption, it suffices to prove $\mathrm{Hom}(G,\ker g)=1.$ Let $\alpha
:G\rightarrow \ker g$ be any group homomorphism. In the case of (i), when $%
\pi /Z$ is torsion-free, the center $Z(G/Z)$ is torsion-free. Since $G$ is
perfect and $H_{2}(G;\mathbb{Z})$ is finite, we have $H^{2}(G;Z(\pi /Z))=0$
by the universal coefficient theorem. In the case of (ii), we also have $%
H^{2}(G;Z(\pi /Z))=0$ using the universal coefficient theorem. Lemma \ref%
{berrick} implies that $\alpha $ could be lifted to be a group homomorphism $%
\alpha ^{\prime }:G\rightarrow \ker f.$

Let $F:\mathrm{Aut}(\pi )\rightarrow \mathrm{Aut}(Z)$ be the restriction of
automorphisms of $\pi $ to that of the center $Z.$ Since the image $F(\ker
f)\ $is a subgroup of $\mathrm{Aut}(Z)$ and 
\begin{equation*}
\mathrm{Hom}(G,\mathrm{Aut}(Z))=1
\end{equation*}%
by assumption, the map $\alpha ^{\prime }$ has its image in $\ker F\cap \ker
f.$ It is well-know that $(\ker F\cap \ker f)$ is isomorphic to $H^{1}(\pi
/Z;Z)$ (cf. \cite{gu}, Prop. 5, p.45). Since $G$ is perfect, $\alpha
^{\prime }$ has a trivial image. This proves that $\alpha $ is trivial and
thus $\mathrm{Hom}(G,\mathrm{Out}(\pi ))=1.$
\end{proof}

\bigskip

Recall that a group $G$ has cohomological dimension $k$ (denoted as $\mathrm{%
cd}(G)=k$) if the cohomological group $H^{i}(G;A)=0$ for any $i>k$ and $%
\mathbb{Z}G$-module $A,$ but $H^{n}(G;M)\neq 0$ for some $\mathbb{Z}G$%
-module $M.$ The Hirsch number or Hirsch length of a polycyclic group $G$ is
the number of infinite factors in its subnormal series. The following lemma
is well-known (see Gruenberg \cite{gu}, p.152).

\begin{lemma}
If $G$ is finitely generated, torsion-free nilpotent group, then $\mathrm{cd}%
(G)=h(G),$ where $\mathrm{cd}(G)$ is the cohomological dimension and $h(G)$
is the Hirsch number.
\end{lemma}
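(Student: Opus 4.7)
The plan is to establish $\mathrm{cd}(G)=h(G)$ by induction on the Hirsch number $h(G)$, proving the upper and lower bounds separately.

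For the upper bound $\mathrm{cd}(G)\leq h(G)$, I would induct on $h(G)$. The base case $h(G)=0$ forces $G=1$ (since $G$ is torsion-free polycyclic), so $\mathrm{cd}(G)=0$. For the inductive step, I exploit the fact that a nontrivial torsion-free nilpotent group has nontrivial center, and that this center, being finitely generated torsion-free abelian, contains an infinite cyclic subgroup $C\cong\mathbb{Z}$; since $C$ sits inside $Z(G)$, it is normal in $G$. The quotient $G/C$ is again finitely generated torsion-free nilpotent with $h(G/C)=h(G)-1$ (torsion-freeness of the quotient is the standard fact that a cyclic subgroup of the center of a torsion-free nilpotent group is isolated, which can be ensured by choosing $C$ to be the pullback of the torsion subgroup of $Z(G/C)$ if needed, or by picking $C$ inside the last nontrivial term of the lower central series). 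Applying the Lyndon--Hochschild--Serre spectral sequence to $1\to C\to G\to G/C\to 1$ yields $\mathrm{cd}(G)\leq \mathrm{cd}(C)+\mathrm{cd}(G/C)=1+(h(G)-1)=h(G)$.

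For the lower bound $\mathrm{cd}(G)\geq h(G)$, the cleanest route is geometric: by Malcev's theorem, a finitely generated torsion-free nilpotent group $G$ embeds as a cocompact lattice in its Malcev completion, a simply connected nilpotent Lie group $N$ of dimension $h(G)$. The quotient $N/G$ is a closed nilmanifold, which is aspherical because $N$ is contractible, and has fundamental group $G$. Hence $G$ is the fundamental group of a closed aspherical manifold of dimension $h(G)$, so it is a Poincar\'e duality group of dimension $h(G)$, which forces $\mathrm{cd}(G)=h(G)$. A purely algebraic alternative is to induct and show $H^{h(G)}(G;\mathbb{Z})\neq 0$ by tracking the top term in the spectral sequence, using $H^{1}(\mathbb{Z};M)\cong M_{\mathbb{Z}}$ for a suitable coefficient module.

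The main obstacle is the lower bound. The upper bound is a clean induction once one has a central $\mathbb{Z}$ with torsion-free quotient, and the spectral sequence does the rest. The lower bound, by contrast, requires either the nontrivial input of Malcev/Mostow realizing $G$ as the fundamental group of a compact nilmanifold (and then invoking Poincar\'e duality), or a careful bookkeeping argument in the spectral sequence to show the top cohomology does not vanish — the latter is delicate because one must arrange a coefficient module for which the differentials cannot kill the top class. Given that the lemma is attributed to Gruenberg, the intended proof is presumably the algebraic one via duality groups, but for an expository write-up the nilmanifold argument is by far the most transparent.
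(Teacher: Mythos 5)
The paper offers no proof of this lemma: it is quoted as well known, with a citation to Gruenberg's lecture notes (where the result is proved in the more general form $\mathrm{cd}(G)=h(G)$ for torsion-free polycyclic groups). Your argument is the standard textbook proof and is essentially correct. The upper bound by induction on $h(G)$, using the subadditivity $\mathrm{cd}(G)\le \mathrm{cd}(C)+\mathrm{cd}(G/C)$ coming from the Lyndon--Hochschild--Serre spectral sequence, is fine; the lower bound via the Malcev completion (so that $G$ is the fundamental group of a closed orientable aspherical $h(G)$-manifold, whence $H^{h(G)}(G;\mathbb{Z})\cong\mathbb{Z}$) is the most transparent route, and the purely algebraic alternative is in fact less delicate than you suggest: since $C$ is central, $G/C$ acts trivially on $H^{1}(C;\mathbb{Z})\cong\mathbb{Z}$, the corner term $E_{2}^{h-1,1}\cong H^{h-1}(G/C;\mathbb{Z})\cong\mathbb{Z}$ (by induction) receives and emits only zero differentials, and therefore $H^{h}(G;\mathbb{Z})\cong\mathbb{Z}\ne 0$. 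The one point you should tighten is the choice of $C$: it is \emph{not} true that every infinite cyclic subgroup of the centre is isolated (take $G=\mathbb{Z}$ and $C=2\mathbb{Z}$), and ``picking $C$ inside the last nontrivial term of the lower central series'' does not repair this. The correct statement is that the centre of a torsion-free nilpotent group is isolated, so a rank-one direct summand $C$ of $Z(G)\cong\mathbb{Z}^{k}$ is isolated in $G$: if $g^{n}\in C$ then $g^{n}$ is central, hence $g$ is central, hence $g\in C$ because $Z(G)/C$ is torsion-free. Equivalently, replace an arbitrary central infinite cyclic subgroup by its isolator, which is again central and infinite cyclic. With that correction the induction closes and your proof is complete.
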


\begin{lemma}
Let $1\rightarrow Z\rightarrow G\rightarrow Q\rightarrow $ be a central
extension with $Z=Z(G)$ the center and $G$ a torsion-free nilpotent group.
Then $Q$ is torsion-free.
\end{lemma}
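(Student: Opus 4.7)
The plan is to show that any $g \in G$ with $g^{n} \in Z$ for some $n \geq 1$ already lies in $Z$; this immediately yields the torsion-freeness of $Q = G/Z$, since every torsion element of $Q$ admits such a lift. The core observation is that for any such $g$ and any $h \in G$, the centrality of $g^{n}$ gives
\[ (hgh^{-1})^{n} \;=\; h g^{n} h^{-1} \;=\; g^{n}, \]
so that both $g$ and its conjugate $hgh^{-1}$ are $n$-th roots of the common element $g^{n} \in G$.

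Next I would invoke the classical fact that in a torsion-free nilpotent group the $n$-th power map is injective---equivalently, $n$-th roots, when they exist, are unique. Granting this, the displayed identity forces $hgh^{-1} = g$ for every $h \in G$, so $g \in Z(G) = Z$ as desired, and $Q$ has no nontrivial torsion element.

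The main obstacle is justifying this uniqueness-of-roots statement in a torsion-free nilpotent group. My preferred route is induction on the nilpotency class $c$ of $G$. The abelian base case $c = 1$ is immediate from torsion-freeness. For $c \geq 2$, suppose $g^{n} = h^{n}$; using the companion fact (proven by the same induction) that $G/\gamma_{c}(G)$ is again torsion-free nilpotent of class $c-1$, the induction hypothesis applied in $G/\gamma_{c}(G)$ yields $g \equiv h \pmod{\gamma_{c}(G)}$. Writing $g = hz$ with $z \in \gamma_{c}(G) \subseteq Z(G)$, the centrality of $z$ permits the expansion $(hz)^{n} = h^{n} z^{n}$, so $g^{n} = h^{n}$ forces $z^{n} = 1$; torsion-freeness of $G$ itself then gives $z = 1$, hence $g = h$. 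Alternatively one may cite Mal'cev's embedding of $G$ into a torsion-free divisible nilpotent group in which every element has a unique $n$-th root, and deduce the claim in one line.
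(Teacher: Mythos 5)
Your reduction is correct and genuinely different from the paper's route: if $\bar g\in Q$ satisfies $\bar g^{\,n}=1$ then $g^{n}\in Z=Z(G)$, so $g$ and $hgh^{-1}$ are both $n$-th roots of the central element $g^{n}$, and uniqueness of roots in a torsion-free nilpotent group forces $hgh^{-1}=g$ for all $h$, i.e. $g\in Z$. Granting the classical unique-roots theorem of Mal'cev, this is a complete and rather slick proof, and unlike the paper's argument it does not need $G$ to be finitely generated. The paper instead quotes the standard fact that the upper central factors $Z_{i}/Z_{i-1}$ are torsion-free and descends along the upper central series (via finite cohomological dimension, though the elementary fact that an extension of a torsion-free group by a torsion-free group is torsion-free already suffices there).

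The genuine problem is your \emph{preferred} justification of unique roots: the ``companion fact'' that $G/\gamma_{c}(G)$ is again torsion-free is false. Take $G=\mathbb{Z}^{3}$ with multiplication $(a,b,c)(a',b',c')=(a+a',\,b+b',\,c+c'+2ab')$; this is torsion-free nilpotent of class $2$, with generators $x,y,z$ satisfying $[x,y]=z^{2}$ and $z$ central, so $\gamma_{2}(G)=\langle z^{2}\rangle$ and $G/\gamma_{2}(G)\cong\mathbb{Z}^{2}\times\mathbb{Z}/2$ has $2$-torsion. In this example $\bar z^{\,2}=\bar 1^{\,2}$ but $\bar z\neq\bar 1$ in $G/\gamma_{2}(G)$, so the inductive step ``$g\equiv h\pmod{\gamma_{c}(G)}$'' cannot be extracted from the induction hypothesis. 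Be aware, too, that the usual textbook proof of unique roots runs in the opposite direction: one first proves precisely the present lemma (for $x\in Z_{2}$ with $x^{n}\in Z_{1}$, the identity $[x^{n},g]=[x,g]^{n}$ with $[x,g]\in Z_{1}$ shows $Z_{2}/Z_{1}$ is torsion-free, and a nontrivial torsion subgroup of the nilpotent group $G/Z_{1}$ would meet its center $Z_{2}/Z_{1}$), and only then deduces unique roots by induction through $G/Z(G)$. So your argument is non-circular only if the unique-roots input comes from an independent source; your fallback via the Mal'cev embedding (or a direct citation of the theorem) does supply that, and with it the proof stands, but the induction as written should be deleted or repaired (e.g. by replacing $\gamma_{c}(G)$ with its isolator).
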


\begin{proof}
It's known that all the quotient $Z_{i}/Z_{i-1}$ is torsion-free. Suppose
that the nilpotency class of $G$ is $n,$ i.e. $Z_{n}=G.$ Then $G/Z_{n-1}$ is
(torsion-free) abelian and we have an exact sequence 
\begin{equation*}
1\rightarrow Z_{n-1}/Z_{n-2}\rightarrow G/Z_{n-2}\rightarrow
G/Z_{n-1}\rightarrow 1.
\end{equation*}%
Since both $G/Z_{n-1}$ and $Z_{n-1}/Z_{n-2}$ are both finitely generated
(Note that every subgroup of a finitely generated nilpotent group is
finitely generated) and of finite cohomology dimensions, $G/Z_{n-2}$ is of
finite cohomological dimension and thus torsion-free. Inductively, we prove
the lemma.
\end{proof}

\begin{lemma}
\label{hom}Let $G$ be a finitely generated torsion-free nilpotent group of
cohomological dimension $k.$ When $k<n,$ the set of group homomorphisms 
\begin{equation*}
\mathrm{Hom}(\mathrm{St}_{n}(\mathbb{Z}),\mathrm{Out}(G))=1,
\end{equation*}%
and thus $\mathrm{Hom}(\mathrm{SL}_{n}(\mathbb{Z}),\mathrm{Out}(G))=1.$
\end{lemma}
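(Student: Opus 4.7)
The plan is to proceed by induction on the cohomological dimension $k=\mathrm{cd}(G)=h(G)$, peeling off the center at each stage and invoking Lemma \ref{ni} together with Corollary \ref{cor}. The base case $k=0$ is immediate: $G$ is trivial and $\mathrm{Out}(G)=1$.

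For the inductive step I would let $G$ be finitely generated torsion-free nilpotent of cohomological dimension $k<n$ and consider the central extension $1\to Z\to G\to G/Z\to 1$ with $Z=Z(G)$. Because $G$ is nilpotent and nontrivial, $Z$ is nontrivial; as a finitely generated torsion-free abelian group it is isomorphic to $\mathbb{Z}^{m}$ for some $1\leq m\leq k$. The lemma immediately preceding the statement shows $G/Z$ is torsion-free (and it is plainly finitely generated nilpotent), while additivity of the Hirsch length on the sequence yields $\mathrm{cd}(G/Z)=h(G/Z)=k-m<k<n$. The inductive hypothesis then supplies $\mathrm{Hom}(\mathrm{St}_{n}(\mathbb{Z}),\mathrm{Out}(G/Z))=1$.

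To close the induction I would apply Lemma \ref{ni}(i) with acting group $\mathrm{St}_{n}(\mathbb{Z})$ and $\pi=G$. The hypotheses to check are: $\mathrm{St}_{n}(\mathbb{Z})$ is perfect, which follows directly from the Steinberg commutator relation $[x_{ij}(1),x_{jk}(1)]=x_{ik}(1)$; its Schur multiplier $H_{2}(\mathrm{St}_{n}(\mathbb{Z});\mathbb{Z})$ is finite, which follows from the five-term exact sequence of the central extension in Lemma \ref{mi} together with the finiteness of $H_{2}(\mathrm{SL}_{n}(\mathbb{Z});\mathbb{Z})$ recalled in the proof of Lemma \ref{cs}; $G$ and $G/Z$ are both torsion-free, as already noted; $\mathrm{Hom}(\mathrm{St}_{n}(\mathbb{Z}),\mathrm{Aut}(Z))=\mathrm{Hom}(\mathrm{St}_{n}(\mathbb{Z}),\mathrm{GL}_{m}(\mathbb{Z}))=1$ by Corollary \ref{cor} since $m\leq k\leq n-1$; and $\mathrm{Hom}(\mathrm{St}_{n}(\mathbb{Z}),\mathrm{Out}(G/Z))=1$ by the inductive hypothesis. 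Lemma \ref{ni} then outputs $\mathrm{Hom}(\mathrm{St}_{n}(\mathbb{Z}),\mathrm{Out}(G))=1$. The corresponding statement for $\mathrm{SL}_{n}(\mathbb{Z})$ drops out by precomposing any $\varphi\colon\mathrm{SL}_{n}(\mathbb{Z})\to\mathrm{Out}(G)$ with the surjection $f\colon\mathrm{St}_{n}(\mathbb{Z})\twoheadrightarrow\mathrm{SL}_{n}(\mathbb{Z})$ from Lemma \ref{mi}.

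The argument is essentially mechanical once these pieces are assembled, so no single step is a genuine obstacle. The point that most deserves attention is the dimension bookkeeping: at every stage one must have $m=\mathrm{rank}(Z)\leq k<n$ so that Corollary \ref{cor} applies to $\mathrm{GL}_{m}(\mathbb{Z})$, which is guaranteed because $m\leq h(G)=\mathrm{cd}(G)=k<n$. Since each inductive step strictly decreases $k$ by $m\geq 1$, the recursion terminates after finitely many reductions, giving the lemma.
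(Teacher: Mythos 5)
Your proposal is correct and follows essentially the same route as the paper: peel off the center along the upper central series, handle the abelian layers $Z\cong\mathbb{Z}^{m}$ and $\mathrm{Out}(G/Z)$ in the abelian base case via Corollary \ref{cor} (using $h(Z),h(G/Z)\leq k\leq n-1$), and apply Lemma \ref{ni} repeatedly, the paper phrasing the recursion in terms of the nilpotency class where you induct on $\mathrm{cd}(G)=h(G)$. The only imprecision is your claim that finiteness of $H_{2}(\mathrm{St}_{n}(\mathbb{Z});\mathbb{Z})$ follows from the five-term exact sequence; one really needs the full Lyndon--Hochschild--Serre spectral sequence of the central extension in Lemma \ref{mi} (or the standard fact that $\mathrm{St}_{n}(\mathbb{Z})$ is a finite central extension of a group with finite Schur multiplier), but the fact itself is standard and is also what the paper relies on.
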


\begin{proof}
Let $Z$ be the center of $G.$ Note that cohomological dimension $\mathrm{cd}%
(G)=h(G)$ and $\mathrm{cd}(G/Z)=h(G/Z)$ and $h(G/Z)\leq h(G)-1.$ When $G/Z$
is abelian, we have $h(G/Z)\leq k$ and $\mathrm{Out}(G/Z)=\mathrm{GL}%
_{h(G/Z)}(\mathbb{Z}).$ Noting that 
\begin{equation*}
\mathrm{Hom}(\mathrm{St}_{n}(\mathbb{Z}),\mathrm{GL}_{k}(\mathbb{Z}))=1
\end{equation*}%
for any $k\leq n-1$ by Lemma \ref{cor}, we have $\mathrm{Hom}(\mathrm{St}%
_{n}(\mathbb{Z}),\mathrm{Out}(G/Z))=1.$ Similarly, we get $\mathrm{Hom}(%
\mathrm{SL}_{n}(\mathbb{Z}),\mathrm{Aut}(Z))=1$ by noting that $Z$ is
torsion-free abelian and $h(Z)\leq k.$ Using Lemma \ref{ni} repeatedly, we
have $\mathrm{Hom}(\mathrm{St}_{n}(\mathbb{Z}),\mathrm{Out}(G))=1.$
\end{proof}

\bigskip

Let $M$ be aspherical manifolds with a finitely generated nilpotent
fundamental group $\pi _{1}(M).$ Any group action of $\mathrm{SL}_{n}(%
\mathbb{Z})$ $(n\geq 3)$ on $M^{k}$ $(k<n)$ is trivial, as following.

\begin{proof}[Proof of Theorem \protect\ref{th2}]
Since $M$ is aspherical, $M$ is a classifying space for $\pi _{1}(M)$ and
thus the cohomological dimension $\mathrm{cd}(\pi _{1}(M))\leq r.$ By Lemma %
\ref{hom}, any group homomorphism $\mathrm{SL}_{n}(\mathbb{Z})\rightarrow 
\mathrm{Out}(\pi _{1}(M))$ is trivial. By Theorem \ref{th1}, any group
action of $\mathrm{SL}_{n}(\mathbb{Z})$ on $M$ is trivial.
\end{proof}

\section{Flat and almost flat manifolds}

A closed manifold $M$ is almost flat if for any $\varepsilon >0$ there is a
Riemannian metric $g_{\varepsilon }$ on $M$ such that $\mathrm{diam}%
(M,g_{\varepsilon })<1\ $and $g_{\varepsilon }$ is $\varepsilon $-flat. Let $%
G$ be a simply connected nilpotent Lie group. Choose a maximal compact
subgroup $C$ of $\mathrm{Aut}(G).$ If $\pi $ is a torsion-free uniform
discrete subgroup of the semi-direct product $G\rtimes C,$ the orbit space $%
M=\pi \backslash G$ is called an infra-nilmanifold and $\pi $ is called a
generalized Bieberbach group. The group $F:=\pi /(\pi \cap G)$ is called the
holonomy group of $M.$ When $G=\mathbb{R}^{n},$ the abelian Lie group, $M$
is called a flat manifold. By Gromov and Ruh \cite{gr,ru}, every almost flat
is diffeomorphic to an infra-nilmanifold. Note that $N:=\pi \cap G$ is the
unique maximal nilpotent normal subgroup of $\pi $.

The automorphism of $\pi $ is studied by Igodt and Malfait \cite{im},
generalizing the corresponding result for flat manifolds obtained by Charlap
and Vasquez \cite{cv}. Let's recall the relevant results as follows. Let $%
\psi :F\rightarrow \mathrm{Out}(N)$ be an injective group homomorphism and 
\begin{equation*}
1\rightarrow N\rightarrow \pi \rightarrow F\rightarrow 1
\end{equation*}%
be a group extension compatible with $\psi .$ The extension determines a
cohomology class $a\in H^{2}(F;N)$ (the set of 2-cohomoIogy classes
compatible with $\psi $). Let $p:$ $\mathrm{Aut}(N)\rightarrow \mathrm{Out}%
(N)$ be the natural quotient map and $\mathcal{M}_{\psi }=p^{-1}(N_{\mathrm{%
Out}(N)}(F)),$ the preimage of the normalizer of $\psi (F)$ in $\mathrm{Out}%
(N)$. Denote by $\mathcal{M}_{\psi ,a}$ the stabilizer of $a$ under the
action of $\mathcal{M}_{\psi }$ on $H^{2}(F;N)$ by conjugations. Let $A:%
\mathrm{Aut}(\pi )\rightarrow \mathrm{Aut}(N)$ be the group homomorphisms of
restrictions. For an element $n\in N,$ write $\mu (n)\in \mathrm{Aut}(N)$
the inner automorphism determined by $n,$ i.e. $\mu (n)(x)=nxn^{-1}.$ Denote
by $\ast :\mathcal{M}_{\psi }\rightarrow \mathrm{Aut}(F)$ the group
homomorphism given by $\ast (v)=\psi \circ \mu (p(v))\circ \psi ^{-1}$ (cf. 
\cite{im}, Lemma 3.3).

We will need the following result (cf. \cite{im}, Theorem 4.6, Theorem 4.8
and its proof).

\begin{lemma}
\label{key}The following sequences are exact:%
\begin{equation*}
1\rightarrow Z^{1}(F;Z(N))\rightarrow \mathrm{Aut}(\pi )\overset{A}{%
\rightarrow }\mathcal{M}_{\psi ,a}\rightarrow 1
\end{equation*}%
and 
\begin{equation*}
1\rightarrow H^{1}(F;Z(N))\rightarrow \mathrm{Out}(\pi )\overset{A}{%
\rightarrow }Q\rightarrow 1.
\end{equation*}%
The quotient $Q=(\mathcal{M}_{\psi ,a}/\mathrm{Inn}(N))/F$ and fits into an
exact sequence%
\begin{eqnarray}
1 &\rightarrow &Q_{2}=((\ker (\ast )\cap \mathcal{M}_{\psi ,a})/\mathrm{Inn}%
(N))/Z(F)\rightarrow Q  \notag \\
&\rightarrow &Q_{1}=\func{Im}(\ast |_{\mathcal{M}_{\psi ,a}})/\mathrm{Inn}%
(F)\rightarrow 1,  \TCItag{6}
\end{eqnarray}%
where $(\ker (\ast )\cap \mathcal{M}_{\psi ,a})/\mathrm{Inn}(N)$ is
contained in the centralizer $C_{\mathrm{Out}(N)}(F).$
\end{lemma}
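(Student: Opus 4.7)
The plan is to recover both exact sequences by systematically unpacking how an automorphism $\varphi\in\mathrm{Aut}(\pi)$ is determined by its restriction to $N$ together with an auxiliary cochain. Fix once and for all a set-theoretic section $s:F\to\pi$, yielding a $2$-cocycle $c\in Z^{2}(F;N)$ representing $a$. Every $\varphi\in\mathrm{Aut}(\pi)$ restricts to $A(\varphi)\in\mathrm{Aut}(N)$ (since $N$, being the unique maximal nilpotent normal subgroup, is characteristic) and descends to some $\bar{\varphi}\in\mathrm{Aut}(F)$. A direct check shows that the outer class of $A(\varphi)$ normalizes $\psi(F)\subset\mathrm{Out}(N)$ with $\ast(A(\varphi))=\psi\circ\bar{\varphi}\circ\psi^{-1}$, and that the pair $(A(\varphi),\bar{\varphi})$ transports the class $a$ to itself in $H^{2}(F;N)$. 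This places $A(\varphi)$ in $\mathcal{M}_{\psi,a}$.

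For surjectivity of $A$, given $\beta\in\mathcal{M}_{\psi,a}$, set $\bar{\beta}:=\ast(\beta)\in\mathrm{Aut}(F)$; the hypothesis $\beta\cdot a=a$ says that $\beta_{\ast}c-\bar{\beta}^{\ast}c=\delta\xi$ for some $1$-cochain $\xi\colon F\to N$. Define $\varphi(s(f)n):=s(\bar{\beta}(f))\,\xi(f)\,\beta(n)$ and verify that $\varphi$ is a homomorphism; the cocycle identity for $c$ together with the coboundary relation for $\xi$ are precisely the conditions needed, and invertibility follows by running the construction for $\beta^{-1}$. Restricting to $N$ recovers $\beta$, proving surjectivity.

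The kernel of $A$ consists of those $\varphi$ that fix $N$ pointwise. For such $\varphi$, the element $\varphi(s(f))s(f)^{-1}$ centralizes every element of $N$ and so lies in $C_{\pi}(N)=Z(N)$ (a standard property of generalized Bieberbach groups, obtained from injectivity of $\psi$). Writing $\varphi(s(f))=s(f)z(f)^{-1}$, the requirement that $\varphi$ respect the product $s(f_{1})s(f_{2})=s(f_{1}f_{2})c(f_{1},f_{2})$ translates, using that $z(f)\in Z(N)$ commutes with $c(f_{1},f_{2})$, into the $1$-cocycle identity for $z\in Z^{1}(F;Z(N))$; the assignment $\varphi\mapsto z$ is an inverse-bijection, producing the first exact sequence. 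Passing to $\mathrm{Out}(\pi)$, the inner automorphisms by elements of $Z(N)$ correspond exactly to coboundaries $B^{1}(F;Z(N))$, accounting for the $H^{1}$ in the second sequence, while inner automorphisms by $n\in N$ and by lifts $s(f)$ together account for the full preimage $p^{-1}(\psi(F))$ inside $\mathcal{M}_{\psi,a}$. Hence the image of $\mathrm{Out}(\pi)$ under $A$ equals $Q=\mathcal{M}_{\psi,a}/p^{-1}(\psi(F))\cong(\mathcal{M}_{\psi,a}/\mathrm{Inn}(N))/F$.

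Finally, sequence $(6)$ comes from restricting the homomorphism $\ast$ to $\mathcal{M}_{\psi,a}$: its image, reduced modulo $\mathrm{Inn}(F)=F/Z(F)$, is by definition $Q_{1}$; its kernel consists of $v\in\mathcal{M}_{\psi,a}$ whose outer class commutes with $\psi(F)$, which after taking the quotient by $\mathrm{Inn}(N)$ and then by $Z(F)$ yields $Q_{2}$ and exhibits it as a subgroup of $C_{\mathrm{Out}(N)}(\psi(F))$. The main technical obstacle is the cocycle bookkeeping in the surjectivity step: one must verify that the formula defining $\varphi$ is consistent across the two coordinates and that the resulting map is a genuine homomorphism, which in turn relies essentially on $C_{\pi}(N)\subseteq N$ and on injectivity of $\psi$; this is precisely where the setting of generalized Bieberbach groups (rather than an arbitrary extension) is used.
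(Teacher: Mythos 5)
This lemma is not proved in the paper at all: it is quoted verbatim from Igodt--Malfait (\cite{im}, Theorems 4.6 and 4.8), so there is no in-paper argument to compare against. Your sketch is a correct reconstruction of the standard Eilenberg--MacLane/Wells-type analysis that underlies the cited result, and it correctly isolates the two points on which everything hinges: (i) $N$ is characteristic (being the unique maximal nilpotent normal subgroup), so the restriction map $A$ is defined, and $C_{\pi}(N)=Z(N)$ (equivalent to faithfulness of the abstract kernel $\psi$), which is what identifies $\ker A$ with $Z^{1}(F;Z(N))$ rather than something larger; and (ii) the stabilization of the class $a$ is exactly the obstruction-vanishing condition needed to lift $\beta\in\mathcal{M}_{\psi,a}$ to $\mathrm{Aut}(\pi)$. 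Your passage to the second sequence (coboundaries arise from conjugation by $Z(N)$, and $A(\mathrm{Inn}(\pi))=p^{-1}(\psi(F))\cap\mathcal{M}_{\psi,a}$ accounts for the quotient by $\mathrm{Inn}(N)$ and then by $F$) and your derivation of (6) by restricting $\ast$ are also the intended arguments; note only that identifying the kernel of $Q\rightarrow Q_{1}$ with $Q_{2}$ uses $Z(\psi(F))=\psi(Z(F))$, again via injectivity of $\psi$, which is worth saying explicitly. The trade-off is the expected one: the citation spares the paper the cocycle bookkeeping in the surjectivity and homomorphism verifications, while your version makes the statement self-contained; if you intend it as a full proof rather than a sketch, those two verifications (that the formula $\varphi(s(f)n)=s(\bar{\beta}(f))\xi(f)\beta(n)$ is multiplicative, and that the map $\ker A\rightarrow Z^{1}(F;Z(N))$ is a group isomorphism) should be written out, but there is no gap in the outline.
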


\begin{proof}[Proof of Theorem \protect\ref{th3}]
\bigskip If $\mathrm{SL}_{n}(\mathbb{Z})$ acts trivially on $M^{r},$ it is
obvious that the induced homomorphism $\mathrm{SL}_{n}(\mathbb{Z}%
)\rightarrow \mathrm{Out}(F)$ is trivial. In order to prove the converse,
it's enough to prove $\mathrm{SL}_{n}(\mathbb{Z})\rightarrow \mathrm{Out}%
(\pi )$ is trivial for a given group action of $\mathrm{SL}_{n}(\mathbb{Z})$
on $M,$ considering Theorem \ref{th1}. We will use the exact sequence (6) in
Lemma \ref{key}. Note that $Q_{1}$ is a subgroup of $\mathrm{Out}(F).$ By
the assumption that the group homomorphism $\mathrm{SL}_{n}(\mathbb{Z}%
)\rightarrow \mathrm{Out}(F)$ is trivial, the composite 
\begin{equation*}
\mathrm{SL}_{n}(\mathbb{Z})\rightarrow \mathrm{Out}(\pi )\rightarrow
Q\rightarrow Q_{1}
\end{equation*}%
has to be trivial. Therefore, the map $\mathrm{SL}_{n}(\mathbb{Z}%
)\rightarrow \mathrm{Out}(\pi )\rightarrow Q$ has its image in $Q_{2}.$
Denote by $K=(\ker (\ast )\cap \mathcal{M}_{\psi ,a})/\mathrm{Inn}(N)$ and $%
Z=Z(F)$ to fit into an exact sequence%
\begin{equation}
1\rightarrow Z\rightarrow K\rightarrow Q_{2}\rightarrow 1.  \tag{7}
\end{equation}%
Since $K$ is a subgroup of $C_{\mathrm{Out}(N)}(F)$ by Lemma \ref{key}, the
center $Z(F)$ lies in the center of $K.$ Therefore, the exact sequence (7)
is a central extension. Let $\mathrm{St}_{n}(\mathbb{Z})$ be the Steinberg
group and denote by the composite 
\begin{equation*}
\alpha :\mathrm{St}_{n}(\mathbb{Z})\rightarrow \mathrm{SL}_{n}(\mathbb{Z}%
)\rightarrow \mathrm{Out}(\pi )\rightarrow Q_{2}.
\end{equation*}%
Since $H_{2}(\mathrm{St}_{n}(\mathbb{Z});\mathbb{Z})=0,$ we have $H^{2}(%
\mathrm{St}_{n}(\mathbb{Z});Z)=0$ by the universal coefficient theorem.
Therefore, $\alpha $ could be lifted to be a group homomorphism $\alpha
^{\prime }:\mathrm{St}_{n}(\mathbb{Z})\rightarrow K$ by Lemma \ref{berrick}.
Note that $K$ is a subgroup of $\mathrm{Out}(N)$ and the cohomological
dimension of $N\ $is at most $r.$ By Lemma \ref{hom}, $\alpha ^{\prime }$ is
trivial and thus $\alpha $ is trivial. This implies the group homomorphism $%
\mathrm{SL}_{n}(\mathbb{Z})\rightarrow \mathrm{Out}(\pi )$ has its image in $%
H^{1}(F;Z(N)).$ Since $H^{1}(F;Z(N))$ is abelian and $\mathrm{SL}_{n}(%
\mathbb{Z})$ is perfect, the map $\mathrm{SL}_{n}(\mathbb{Z})\rightarrow 
\mathrm{Out}(\pi )$ has to be trivial. The proof is finished.
\end{proof}

\begin{remark}
In the proof of Theorem \ref{th3}, we use an essential property of the
Steinberg group that $H^{2}(\mathrm{St}_{n}(\mathbb{Z});A)=0$ for any
abelian group $A.$ This does not hold for $\mathrm{SL}_{n}(\mathbb{Z})$ ($%
n\geq 3$) since $H_{2}(\mathrm{SL}_{n}(\mathbb{Z});\mathbb{Z})=\mathbb{Z}/2.$
If the abelian group $A$ is torsion-free (as in the proofs of Theorems \ref%
{th1},\ref{th2}), we still have $H^{2}(\mathrm{SL}_{n}(\mathbb{Z});A)=0$ and
it is thus not essential to use $\mathrm{St}_{n}(\mathbb{Z}).$
\end{remark}

\section{Examples}

In this section, we give further applications of Theorem \ref{th1} and
Theorem \ref{th3}.

The proof of the following lemma is similar to that of the corresponding
result for $\mathrm{SL}_{n}(\mathbb{Z}),$ proved by Kielak \cite{ki}
(Theorem 2.27).

\begin{lemma}
\label{trg}Let $p$ be a prime. Then $\mathrm{Hom}(\mathrm{St}_{n}(\mathbb{Z}%
),\mathrm{GL}_{k}(\mathbb{Z}/p))=1,$ i.e. any group homomorphism $g:\mathrm{%
St}_{n}(\mathbb{Z})\rightarrow \mathrm{GL}_{k}(\mathbb{Z}/p)$ is trivial,
for $k<n-1.$
\end{lemma}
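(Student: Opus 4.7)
The plan is to follow the structure of Kielak's proof of the analogous statement for $\mathrm{SL}_n(\mathbb{Z})$, adapting it through the central extension
\[
1 \to \langle a \rangle \to \mathrm{St}_n(\mathbb{Z}) \xrightarrow{f} \mathrm{SL}_n(\mathbb{Z}) \to 1
\]
of Lemma \ref{mi}. I would first note that, since $\mathrm{GL}_k(\mathbb{Z}/p)$ is finite and $\mathrm{St}_n(\mathbb{Z})$ is perfect, the image $H := g(\mathrm{St}_n(\mathbb{Z}))$ is a finite perfect subgroup of $\mathrm{GL}_k(\mathbb{F}_p)$.

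Next I would reduce to a congruence quotient. The congruence subgroup property for $\mathrm{SL}_n(\mathbb{Z})$ (Bass--Milnor--Serre \cite{bsm}) applied to the finite-index subgroup $f(\ker g) \leq \mathrm{SL}_n(\mathbb{Z})$ yields an integer $m$ with $\mathrm{SL}_n(\mathbb{Z}, m\mathbb{Z}) \leq f(\ker g)$. Lifting back gives $\ker g \cdot \langle a \rangle \supseteq \mathrm{St}_n(\mathbb{Z}, m\mathbb{Z})$, whence $g(\mathrm{St}_n(\mathbb{Z}, m\mathbb{Z}))$ is of order at most two; since this subgroup is cyclic, the non-abelian composition factors of $H$ coincide with those of the finite group $H/g(\mathrm{St}_n(\mathbb{Z}, m\mathbb{Z}))$, which is a quotient of $\mathrm{St}_n(\mathbb{Z}/m)$. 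By the Chinese Remainder Theorem and the central extensions $K_2(\mathbb{Z}/q^{a_q}) \to \mathrm{St}_n(\mathbb{Z}/q^{a_q}) \to \mathrm{SL}_n(\mathbb{Z}/q^{a_q})$, together with the fact that the kernel of $\mathrm{SL}_n(\mathbb{Z}/q^{a_q}) \to \mathrm{SL}_n(\mathbb{F}_q)$ is a $q$-group, every non-abelian composition factor of $\mathrm{St}_n(\mathbb{Z}/m)$ --- hence of $H$ --- must be of the form $\mathrm{PSL}_n(\mathbb{F}_q)$ for some prime $q$ dividing $m$.

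Finally, if $H \neq 1$, the perfection of $H$ supplies a non-abelian simple quotient, giving a composition factor $S = \mathrm{PSL}_n(\mathbb{F}_q)$. Jordan--Hölder then forces $S$ to be a composition factor of $\mathrm{GL}_k(\mathbb{F}_p)$, whose only non-abelian composition factor is $\mathrm{PSL}_k(\mathbb{F}_p)$ (with no such factor at all for $(k, p) \in \{(2, 2), (2, 3)\}$, in which case we would already be done). So $\mathrm{PSL}_n(\mathbb{F}_q) \cong \mathrm{PSL}_k(\mathbb{F}_p)$ with $n \geq 3$ and $k < n - 1$. By the classification of isomorphisms among projective special linear groups over finite fields, this is impossible: the relevant accidental isomorphisms $\mathrm{PSL}_3(\mathbb{F}_2) \cong \mathrm{PSL}_2(\mathbb{F}_7)$ and $\mathrm{PSL}_4(\mathbb{F}_2) \cong A_8$ both fall outside the range $k < n - 1$. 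I expect the delicate part to be the composition-factor bookkeeping through the central $\langle a \rangle$-extension, where $g$ itself may fail to factor through $\mathrm{St}_n(\mathbb{Z}/m)$, and the case-by-case verification that no accidental $\mathrm{PSL}$-isomorphism intrudes on the admissible range.
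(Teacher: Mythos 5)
Your reduction is sound up to the last step, and it parallels the paper's: you use the congruence subgroup property together with the central extension $1\to\langle a\rangle\to\mathrm{St}_n(\mathbb{Z})\to\mathrm{SL}_n(\mathbb{Z})\to 1$ to show that a nontrivial image $H=g(\mathrm{St}_n(\mathbb{Z}))$ would have a non-abelian simple composition factor of the form $\mathrm{PSL}_n(\mathbb{F}_q)$. The paper makes essentially the same reduction (factoring $g$ through $\mathrm{St}_n(\mathbb{Z}/N)$ and then, after passing to an irreducible constituent and applying Schur's lemma to the central elements, through $\mathrm{PSL}_n(\mathbb{Z}/N)\to\mathrm{PGL}_k(\mathbb{Z}/p)$).

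The gap is in your final step: ``Jordan--H\"older then forces $S$ to be a composition factor of $\mathrm{GL}_k(\mathbb{F}_p)$.'' Jordan--H\"older does no such thing: composition factors of a \emph{subgroup} need not be composition factors of the ambient group. A concrete counterexample to the principle you invoke is $A_7\leq \mathrm{GL}_4(\mathbb{F}_2)\cong A_8$: the simple group $A_7$ is a composition factor of the subgroup $A_7$ but not of $\mathrm{GL}_4(\mathbb{F}_2)$, whose unique composition factor is $A_8$. Consequently, ruling out $\mathrm{PSL}_n(\mathbb{F}_q)\cong\mathrm{PSL}_k(\mathbb{F}_p)$ via the classification of accidental isomorphisms answers a much weaker question than the one you need. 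What must actually be shown is that $\mathrm{PSL}_n(\mathbb{F}_q)$ ($n\geq 3$) is not a \emph{section} of $\mathrm{GL}_k(\mathbb{F}_p)$ for $k<n-1$; equivalently (after a Clifford-theoretic reduction to an irreducible constituent, which also handles the unipotent radical and the imprimitive case), that $\mathrm{PSL}_n(\mathbb{F}_q)$ has no nontrivial projective representation of degree less than $n-1$ over $\overline{\mathbb{F}_p}$. This is exactly where the paper invokes the Landazuri--Seitz lower bounds on minimal degrees of projective representations of finite Chevalley groups (with the defining-characteristic case $q=p$ handled by the natural-module bound). That representation-theoretic input is the real content of the lemma, and your proposal omits it.
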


\begin{proof}
Let $N=\ker g.$ Since the image of $N$ in $\mathrm{SL}_{n}(\mathbb{Z})$ is
of finite index, the map $g$ factors through $g^{\prime }:\mathrm{St}_{n}(%
\mathbb{Z}/N)\rightarrow \mathrm{GL}_{k}(\mathbb{Z}/p)$ for some integer $N.$
Note that $\mathrm{St}_{n}(R_{1}\times R_{2})=\mathrm{St}_{n}(R_{1})\times 
\mathrm{St}_{n}(R_{2})$ for rings $R_{1}$ and $R_{2}.$ Without loss of
generality, we assume that $N$ is a power of a prime number. Let $Z$ be the
center of $\mathrm{St}_{n}(\mathbb{Z}/N).$ Suppose that $\mathrm{GL}_{k}(%
\mathbb{Z}/p)$ acts on $(\mathbb{Z}/p)^{k}$ naturally. We could assume that
the action of $\func{Im}g^{\prime }$ on $(\mathbb{Z}/p)^{k}$ is irreducible.
Note that $(\mathbb{Z}/p)^{k}$ is the intersection of eigenspaces of $%
g^{\prime }(v),v\in Z$ (if necessary, we may consider the algebraic closure
of $\mathbb{Z}/p$). After change of basis in $(\mathbb{Z}/p)^{k},$ we get
that $g^{\prime }(N)$ lies in the center of $\mathrm{GL}_{k}(\mathbb{Z}/p).$
Therefore, $g^{\prime }$ induces a map $g^{\prime \prime }:\mathrm{PSL}_{n}(%
\mathbb{Z}/N)\rightarrow \mathrm{PGL}_{k}(\mathbb{Z}/p).$ However, it's
known that $g^{\prime \prime }$ has to be trivial by Landazuri--Seitz \cite%
{ls}.
\end{proof}

Let $A$ be a finite abelian group. For a prime $p,$ define the $p$-rank $%
\mathrm{rank}_{p}(A)$ as the dimension of $A\bigotimes_{\mathbb{Z}/p}\mathbb{%
Z}/p,$ as a vector space over $\mathbb{Z}/p.$

\begin{lemma}
\label{ab}Let $A$ be a finite abelian group with $\mathrm{rank}_{p}(A)<n-1$
for every prime $p.$ Then every group homomorphism $\mathrm{St}_{n}(\mathbb{Z%
})\rightarrow \mathrm{Aut}(A)$ is trivial for $n\geq 3$.
\end{lemma}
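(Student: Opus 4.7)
The plan is to reduce to the case of a finite abelian $p$-group, apply Lemma \ref{trg} to the natural map $\mathrm{Aut}(A) \to \mathrm{GL}_k(\mathbb{Z}/p)$, and then exploit the fact that $\mathrm{St}_n(\mathbb{Z})$ is perfect to eliminate the remaining $p$-group part of the image.

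First I would decompose $A = \bigoplus_{p} A_p$ into its $p$-primary components. Since $\mathrm{Aut}(A) = \prod_p \mathrm{Aut}(A_p)$ and each factor is invariant, it suffices to show that any homomorphism $\mathrm{St}_n(\mathbb{Z}) \to \mathrm{Aut}(A_p)$ is trivial. Assume therefore that $A$ is a finite abelian $p$-group of rank $k = \mathrm{rank}_p(A) < n-1$.

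Next I would consider the reduction-mod-$p$ homomorphism
\begin{equation*}
\rho : \mathrm{Aut}(A) \longrightarrow \mathrm{Aut}(A/pA) = \mathrm{GL}_k(\mathbb{Z}/p).
\end{equation*}
The composite $\rho \circ f : \mathrm{St}_n(\mathbb{Z}) \to \mathrm{GL}_k(\mathbb{Z}/p)$ is trivial by Lemma \ref{trg}, so the image of $f$ lies in $K := \ker \rho$. I would then verify that $K$ is a $p$-group: writing $A = \bigoplus_{i=1}^k \mathbb{Z}/p^{a_i}$ and picking a generating set, any $\phi \in K$ acts as $I + pM$ in a suitable matrix description, and a direct expansion of $(I+pM)^{p^m}$ shows that for $m$ large compared with the exponent of $A$ every summand vanishes, so $\phi$ has $p$-power order. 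Hence $K$ is a finite $p$-group.

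Finally I would use that $\mathrm{St}_n(\mathbb{Z})$ is perfect (its abelianization is trivial): any nontrivial finite $p$-group has a nontrivial abelian quotient $G/[G,G]$, so the only perfect finite $p$-group is the trivial one. Therefore the image of $f$ in the $p$-group $K$ must be trivial, which completes the proof. The main conceptual step is the reduction via $\rho$ and Lemma \ref{trg}; the rest is a standard filtration-of-$\mathrm{Aut}$ argument, and I do not foresee any real obstacle beyond the bookkeeping of the $p$-primary decomposition.
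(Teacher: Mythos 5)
Your proof is correct, but it takes a genuinely different route from the paper's. The paper filters $A$ by its socle $A_{1}=\{a\in A: pa=0\}$ (a characteristic subgroup) and inducts: Lemma \ref{trg} handles the elementary abelian case, the inductive hypothesis kills the images in $\mathrm{Aut}(A_{1})$ and $\mathrm{Aut}(A/A_{1})$, and the remaining kernel is identified with the abelian group $H^{1}(A/A_{1};A_{1})$, which is then killed by perfectness of $\mathrm{St}_{n}(\mathbb{Z})$. You instead apply Lemma \ref{trg} once, to the Frattini quotient $A/pA$, and invoke the Burnside-type fact that $\ker\bigl(\mathrm{Aut}(A)\rightarrow \mathrm{Aut}(A/pA)\bigr)$ is a finite $p$-group, so that perfectness finishes in one stroke; this is shorter and avoids both the induction on the exponent and the cohomological identification of the kernel, while the paper's version has the advantage of running on exactly the same machinery (central filtrations plus Lemma \ref{ni}) reused for Lemma \ref{snil}. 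One small imprecision in your write-up: an element of $\ker\rho$ need not literally have the form $I+pM$ with $M\in\mathrm{End}(A)$ (e.g.\ for $A=\mathbb{Z}/p\oplus\mathbb{Z}/p^{2}$ the map $e_{1}\mapsto e_{1}+pe_{2}$, $e_{2}\mapsto e_{2}$ is in the kernel but the correction term is not $p$ times an endomorphism); the correct statement is $\phi=\mathrm{id}+\psi$ with $\mathrm{im}(\psi)\subseteq pA$, whence $\psi^{j}(A)\subseteq p^{j}A$ and the same binomial expansion of $(\mathrm{id}+\psi)^{p^{m}}$ shows $\phi$ has $p$-power order. With that adjustment the argument is complete.
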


\begin{proof}
Since a group homomorphism preserves $p$-Sylow subgroup of $A,$ it's enough
to prove the theorem for a $p$-group $A.$ If $A$ is an elementary $p$-group, 
$\mathrm{Aut}(A)=\mathrm{GL}_{k}(\mathbb{Z}/p),$ $k=\mathrm{rank}_{p}(A).$
Thus $\mathrm{Hom}(\mathrm{St}_{n}(\mathbb{Z}),\mathrm{Aut}(A))$ is trivial
by Lemma \ref{trg}. If $A$ is not elementary, the subgroup $A_{1}$
consisting of elements of order $p$ is a characteristic subgroup $A$.
Inductively, we assume that $\mathrm{Hom}(\mathrm{St}_{n}(\mathbb{Z}),%
\mathrm{Aut}(A/A_{1}))$ and $\mathrm{Hom}(\mathrm{St}_{n}(\mathbb{Z}),%
\mathrm{Aut}(A_{1}))$ are both trivial. Let 
\begin{equation*}
B=\{f\in \mathrm{Aut}(A):f|_{A_{1}}=\mathrm{id}_{A_{1}}\}.
\end{equation*}%
The group $\ker (\mathrm{Aut}(A)\rightarrow \mathrm{Aut}(A/A_{1}))\cap
B=H^{1}(A/A_{1};A_{1})$ is abelian (cf. \cite{gu}, Prop. 5, p.45). Since $%
\mathrm{St}_{n}(\mathbb{Z})$ $(n\geq 3)$ is perfect, we have $\mathrm{Hom}(%
\mathrm{St}_{n}(\mathbb{Z}),\mathrm{Aut}(A))$ is trivial.
\end{proof}

\begin{lemma}
\label{snil}Let $\pi $ be a finite nilpotent group with the upper central
series $1=Z_{0}<Z_{1}<\cdots <Z_{k}=\pi .$ Suppose that $\mathrm{rank}%
_{p}(Z_{i}/Z_{i-1})<n-1$ for each prime $p$ and each $i=1,\cdots ,k.$ Then
the set of group homomorphisms $\mathrm{Hom}(\mathrm{St}_{n}(\mathbb{Z}),%
\mathrm{Out}(\pi ))=1.$
\end{lemma}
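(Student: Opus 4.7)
\textbf{Proof proposal for Lemma \ref{snil}.} The plan is to induct on the nilpotency class $k$, mirroring the argument of Lemma \ref{hom} but with the $p$-rank hypothesis replacing cohomological dimension and with Lemma \ref{ab} playing the role that Corollary \ref{cor} played there. The base case $k=1$ is immediate from Lemma \ref{ab}: when $\pi = Z_{1}$ is abelian we have $\mathrm{Out}(\pi)=\mathrm{Aut}(\pi)$, and the hypothesis $\mathrm{rank}_{p}(\pi)<n-1$ for every prime $p$ is exactly what Lemma \ref{ab} needs to conclude $\mathrm{Hom}(\mathrm{St}_{n}(\mathbb{Z}),\mathrm{Aut}(\pi))=1$.

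For the inductive step I would set $Z=Z(\pi)=Z_{1}$ and pass to the quotient $\pi/Z$. Its upper central series is
\begin{equation*}
1<Z_{2}/Z_{1}<Z_{3}/Z_{1}<\cdots <Z_{k}/Z_{1},
\end{equation*}
of length $k-1$, with successive quotients $(Z_{i+1}/Z_{1})/(Z_{i}/Z_{1})\cong Z_{i+1}/Z_{i}$ for $i\geq 1$. Thus the rank hypothesis for $\pi/Z$ is inherited from that for $\pi$, and the inductive hypothesis applies to give $\mathrm{Hom}(\mathrm{St}_{n}(\mathbb{Z}),\mathrm{Out}(\pi/Z))=1$. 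Simultaneously, since $Z$ is a finite abelian group with $\mathrm{rank}_{p}(Z)<n-1$ for every prime $p$, Lemma \ref{ab} yields $\mathrm{Hom}(\mathrm{St}_{n}(\mathbb{Z}),\mathrm{Aut}(Z))=1$.

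To glue these two pieces together, I would invoke Lemma \ref{ni}(ii) with $G=\mathrm{St}_{n}(\mathbb{Z})$. The hypotheses of that case are that $G$ is perfect (true for $n\geq 3$) and $H_{2}(G;\mathbb{Z})=0$ (the standard fact about the Steinberg group also used in the proof of Theorem \ref{th3}); both the required triviality conditions $\mathrm{Hom}(G,\mathrm{Aut}(Z))=1$ and $\mathrm{Hom}(G,\mathrm{Out}(\pi/Z))=1$ were just verified. The conclusion of Lemma \ref{ni}(ii) is precisely $\mathrm{Hom}(\mathrm{St}_{n}(\mathbb{Z}),\mathrm{Out}(\pi))=1$, closing the induction.

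The only thing requiring care is the inductive bookkeeping on the upper central series — checking that the $p$-rank bound on each layer $Z_{i}/Z_{i-1}$ really does transfer to $\pi/Z_{1}$ layer by layer — but this is routine once one writes out the upper central series of the quotient. Beyond that there is no substantive obstacle: Lemma \ref{ab} absorbs the abelian $p$-group automorphism input, Lemma \ref{ni}(ii) supplies the central-extension lifting argument, and the vanishing $H^{2}(\mathrm{St}_{n}(\mathbb{Z});A)=0$ (for arbitrary abelian $A$) makes the strong form of Lemma \ref{ni} applicable here even though $\pi$ is finite rather than torsion-free.
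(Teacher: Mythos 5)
Your proposal is correct and follows essentially the same route as the paper: induct along the upper central series, using Lemma \ref{ab} for the base case and for $\mathrm{Aut}(Z(\pi))$, and Lemma \ref{ni}(ii) (via $H_{2}(\mathrm{St}_{n}(\mathbb{Z});\mathbb{Z})=0$) to pass through each central extension $1\rightarrow Z_{i}/Z_{i-1}\rightarrow \pi/Z_{i-1}\rightarrow \pi/Z_{i}\rightarrow 1$. The paper's own proof is a two-line sketch of exactly this argument; you have simply supplied the bookkeeping it omits.
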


\begin{proof}
When $k=1,$ this is proved in Lemma \ref{ab}. Considering the central
extension%
\begin{equation*}
1\rightarrow Z_{i}/Z_{i-1}\rightarrow \pi /Z_{i-1}\rightarrow \pi
/Z_{i}\rightarrow 1
\end{equation*}%
the statement could be proved inductively using Lemma \ref{ab} and Lemma \ref%
{ni}.
\end{proof}

\begin{corollary}
\label{app1}Let $M^{r}$ be a closed almost flat manifold with the holonomy
group $\Phi $ nilpotent satisfying the condition in Lemma \ref{snil}. Then
Conjecture \ref{conj} holds for $M.$ In particular, when $M$ is a closed
flat manifold with abelian holonomy group, Conjecture \ref{conj} holds.
\end{corollary}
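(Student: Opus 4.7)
The plan is to combine Theorem~\ref{th3} with the Steinberg-group vanishing established earlier in the paper. By Theorem~\ref{th3} it suffices to prove that $\mathrm{Hom}(\mathrm{SL}_{n}(\mathbb{Z}),\mathrm{Out}(\Phi))=1$ for every $n\geq 3$ with $n>r$; indeed this yields the stronger conclusion that every action of $\mathrm{SL}_{n}(\mathbb{Z})$ on $M$ is trivial, rather than merely factoring through a finite group as Conjecture~\ref{conj} requires. Given any $\varphi\colon\mathrm{SL}_{n}(\mathbb{Z})\to\mathrm{Out}(\Phi)$, the composite $\varphi\circ f$ with the surjection $f\colon\mathrm{St}_{n}(\mathbb{Z})\twoheadrightarrow\mathrm{SL}_{n}(\mathbb{Z})$ of Lemma~\ref{mi} is a homomorphism from the Steinberg group into $\mathrm{Out}(\Phi)$; under the hypothesized rank condition on $\Phi$, Lemma~\ref{snil} forces $\varphi\circ f$ to be trivial, and since $f$ is surjective, $\varphi$ itself must vanish. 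This settles the first assertion of the corollary.

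For the ``in particular'' case with abelian $\Phi$, it remains to verify the hypothesis of Lemma~\ref{snil}: the upper central series collapses to $1<\Phi$, so I need $\mathrm{rank}_{p}(\Phi)<n-1$ for every prime $p$. Since $M$ is flat (and not merely almost flat), the holonomy representation provides a faithful embedding $\Phi\hookrightarrow\mathrm{GL}_{r}(\mathbb{Z})$. For an odd prime $p$, any elementary abelian $(\mathbb{Z}/p)^{k}<\Phi$ acts faithfully on $\mathbb{Q}^{r}$; every non-trivial rational irreducible of this group has dimension at least $p-1\geq 2$, and faithfulness forces at least $k$ linearly independent such blocks in the decomposition, giving $k\leq r/(p-1)\leq r/2\leq (n-1)/2<n-1$ whenever $n\geq 3$. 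So the odd-prime bound is routine.

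The $p=2$ case is the delicate step, and I expect it to be the main obstacle, because the naive containment only gives $\mathrm{rank}_{2}(\Phi)\leq r$, which is not strict. The plan is to exploit torsion-freeness of the Bieberbach group $\pi$ in the extension $1\to\mathbb{Z}^{r}\to\pi\to\Phi\to 1$. If $\mathrm{rank}_{2}(\Phi)=r$, then $\Phi$ contains a faithful copy of $(\mathbb{Z}/2)^{r}$ inside $\mathrm{GL}_{r}(\mathbb{Z})$. Over $\mathbb{Q}$, every rational irreducible of $(\mathbb{Z}/2)^{r}$ is one-dimensional (a sign character), so a faithful representation on $\mathbb{Q}^{r}$ is $\mathbb{Q}$-equivalent to the standard sign-diagonal action; in particular $-I$ lies in its image, and since $-I$ is central in $\mathrm{GL}_{r}(\mathbb{Q})$, the same element $-I$ must sit in $\Phi\subset\mathrm{GL}_{r}(\mathbb{Z})$. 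For $g=-I$, any affine lift $x\mapsto gx+v$ satisfies $(g+1)v=0$, hence squares to the identity, producing $2$-torsion in $\pi$ and contradicting its torsion-freeness. Therefore $\mathrm{rank}_{2}(\Phi)\leq r-1\leq n-2<n-1$, and the hypothesis of Lemma~\ref{snil} is verified, completing the corollary.
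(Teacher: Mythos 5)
Your proof is correct and, in its first half, identical to the paper's: both reduce the corollary to $\mathrm{Hom}(\mathrm{St}_{n}(\mathbb{Z}),\mathrm{Out}(\Phi))=1$ via Lemma \ref{snil}, then descend along the surjection $\mathrm{St}_{n}(\mathbb{Z})\twoheadrightarrow \mathrm{SL}_{n}(\mathbb{Z})$ and invoke Theorem \ref{th3}. The divergence is in the ``in particular'' clause. The paper handles it in one line: $\Phi\subset\mathrm{GL}_{r}(\mathbb{Z})$ abelian is simultaneously diagonalizable over $\mathbb{C}$, ``therefore'' $\mathrm{rank}_{p}(\Phi)\leq r-1<n-1$. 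As you rightly point out, diagonalizability alone only gives $\mathrm{rank}_{2}(\Phi)\leq r$ (the diagonal group $\{\pm 1\}^{r}$ attains equality inside $\mathrm{GL}_{r}(\mathbb{Z})$), and $\leq r$ does not suffice since $r$ may equal $n-1$. Your additional step --- if $\mathrm{rank}_{2}(\Phi)=r$ then a faithful rational representation of $(\mathbb{Z}/2)^{r}$ on $\mathbb{Q}^{r}$ forces $-I\in\Phi$, whose affine lifts square to the identity and so contradict torsion-freeness of the Bieberbach group --- is exactly the ingredient needed to obtain the strict bound $\mathrm{rank}_{2}(\Phi)\leq r-1$; it is the standard fact that every involution in the holonomy of a flat manifold must have $+1$ as an eigenvalue, which the paper uses implicitly without justification. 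Your treatment of odd $p$ via $(p-1)$-dimensional rational irreducibles is finer than strictly necessary but correct. In short, same route as the paper, with a genuine (if classical) repair of the one step the paper elides.
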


\begin{proof}
By Lemma \ref{snil}, $\mathrm{Hom}(\mathrm{St}_{n}(\mathbb{Z}),\mathrm{Out}%
(\Phi ))=1$ and thus $\mathrm{Hom}(\mathrm{SL}_{n}(\mathbb{Z}),\mathrm{Out}%
(\Phi ))=1.$ Theorem \ref{th3} implies that Conjecture \ref{conj} is true.
When $M$ is flat, $\Phi $ is a subgroup of $\mathrm{GL}_{r}(\mathbb{Z}).$ If 
$\Phi $ is abelian, elements in $\Phi $ could be simultaneously
diagonalizable in $\mathrm{GL}_{r}(\mathbb{C}).$ Therefore, $\mathrm{rank}%
_{p}(\Phi )\leq r-1<n-1$ for each prime $p.$
\end{proof}

\bigskip

\begin{lemma}
\label{di}Let $\Phi $ be a dihedral group $D_{2k}$, symmetric group $S_{k}$
or alternating group $A_{k}.$ Then $\mathrm{Hom}(\mathrm{SL}_{n}(\mathbb{Z}),%
\mathrm{Out}(\Phi ))=1$ for $n\geq 3.$
\end{lemma}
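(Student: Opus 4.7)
The plan is to reduce everything to one algebraic observation: $\mathrm{SL}_{n}(\mathbb{Z})$ is perfect for $n\geq 3$, so any group homomorphism from it to a solvable group must be trivial (a perfect subgroup of a solvable group lies in every term of the derived series, hence is trivial). It therefore suffices to verify that $\mathrm{Out}(\Phi)$ is solvable, and in fact abelian in almost every case, for each of the three families listed.

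For $\Phi=S_{k}$, invoke the classical facts $\mathrm{Out}(S_{k})=1$ when $k\neq 6$ and $\mathrm{Out}(S_{6})\cong \mathbb{Z}/2$; both are abelian. For $\Phi=A_{k}$, the standard computations give $\mathrm{Out}(A_{k})\cong \mathbb{Z}/2$ for $k\geq 3$ with $k\neq 6$, $\mathrm{Out}(A_{6})\cong (\mathbb{Z}/2)^{2}$, and trivial outer automorphism groups for $k\leq 2$; in every case the group is abelian. For $\Phi=D_{2k}$ of order $2k$ with $k\geq 3$, I would identify $\mathrm{Aut}(D_{2k})$ with the holomorph $\mathrm{Hol}(\mathbb{Z}/k)=\mathbb{Z}/k\rtimes (\mathbb{Z}/k)^{\ast }$ acting on the cyclic subgroup $\langle r\rangle $, parametrising an automorphism by a pair $(b,a)$ with $r\mapsto r^{a}$ and $s\mapsto r^{b}s$. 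A direct computation pins down $\mathrm{Inn}(D_{2k})$ explicitly as the image of the pairs $(-2i,1)$ and $(2i,-1)$; when $k$ is odd the quotient equals $(\mathbb{Z}/k)^{\ast }/\{\pm 1\}$ and is manifestly abelian, and when $k$ is even a short commutator check (using that every $a\in (\mathbb{Z}/k)^{\ast }$ is odd) shows that the central extension $1\to \mathbb{Z}/2\to \mathrm{Out}(D_{2k})\to (\mathbb{Z}/k)^{\ast }/\{\pm 1\}\to 1$ remains abelian. The degenerate cases $k\leq 2$ give $\mathrm{Out}(D_{2})=1$ or $\mathrm{Out}((\mathbb{Z}/2)^{2})\cong S_{3}$; both are solvable, so the master reduction still applies.

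The main, and quite minor, obstacle is the explicit commutator verification inside the dihedral case for even $k$; the remaining cases are either a citation or a one-line calculation. Once solvability of $\mathrm{Out}(\Phi)$ is in hand, the perfect-to-solvable principle stated in the opening paragraph kills every homomorphism $\mathrm{SL}_{n}(\mathbb{Z})\to \mathrm{Out}(\Phi)$ uniformly across the three families.
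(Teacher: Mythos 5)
Your proposal is correct and follows essentially the same route as the paper: the paper likewise identifies $\mathrm{Aut}(D_{2k})$ with $\mathbb{Z}/k\rtimes \mathrm{Aut}(\mathbb{Z}/k)$ to conclude $\mathrm{Out}(D_{2k})$ is solvable, notes that $\mathrm{Out}(S_k)$ and $\mathrm{Out}(A_k)$ are abelian, and then kills every homomorphism using perfectness of $\mathrm{SL}_n(\mathbb{Z})$. Your extra explicit computation of $\mathrm{Out}(D_{2k})$ is more than is needed, since solvability of $\mathrm{Aut}(D_{2k})$ already suffices.
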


\begin{proof}
It's well-known that $\mathrm{Aut}(D_{2k})=\mathbb{Z}/k\rtimes \mathrm{Aut}(%
\mathbb{Z}/k),$ a solvable group. Therefore, $\mathrm{Out}(D_{2k})$ is
solvable. When $\Phi $ is $S_{k}$ or $A_{k},$ we have $\mathrm{Out}(\Phi )$
is abelian. However, $\mathrm{SL}_{n}(\mathbb{Z})$ is perfect and thus $%
\mathrm{Hom}(\mathrm{SL}_{n}(\mathbb{Z}),\mathrm{Out}(\Phi ))=1.$
\end{proof}

For flat manifolds of low dimensions, we obtain the following.

\begin{corollary}
\label{lflat}Conjecture \ref{conj} holds for closed flat manifolds $M^{r}$of
dimension $r\leq 5.$
\end{corollary}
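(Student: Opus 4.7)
The plan is to apply Theorem \ref{th3}, which reduces the conjecture for a flat manifold $M^r$ to the statement $\mathrm{Hom}(\mathrm{SL}_n(\mathbb{Z}),\mathrm{Out}(\Phi))=1$ for the holonomy group $\Phi$. Since $\mathrm{SL}_n(\mathbb{Z})$ is perfect for $n\geq 3$, any homomorphism from it into a solvable group is trivial; thus it suffices to verify, for every holonomy group $\Phi$ arising from a closed flat $r$-manifold with $r\leq 5$, that $\mathrm{Out}(\Phi)$ admits no nontrivial $\mathrm{SL}_n(\mathbb{Z})$-image.

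First I would invoke the classification of Bieberbach groups in low dimensions, which gives a concrete finite list of possible holonomies. For $r\leq 3$ every holonomy group is abelian (the trivial group, $\mathbb{Z}/m$ for $m\in\{2,3,4,6\}$, or $\mathbb{Z}/2\times\mathbb{Z}/2$), and Corollary \ref{app1} immediately disposes of these cases. For $r=4$, the classification of Brown--B\"{u}low--Neub\"{u}ser--Wondratschek--Zassenhaus (the $74$ flat $4$-manifolds) yields a short list of holonomies; beyond the abelian ones one encounters $S_3\cong D_6$, $D_8$, $D_{12}$, and $A_4$, all covered by Lemma \ref{di}. For $r=5$, the Cid--Schulz classification of the $1060$ flat $5$-manifolds gives a larger but still finite list that, in addition to the groups already treated, contains $S_4$ and various small extensions built from abelian, dihedral, symmetric, and alternating pieces.

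For the remaining holonomy groups $\Phi$ in dimension $5$ that do not fall directly under Corollary \ref{app1} or Lemma \ref{di}, I would exhibit a short exact sequence
\begin{equation*}
1\to A\to \Phi\to B\to 1
\end{equation*}
with $A$ abelian (or nilpotent as in Lemma \ref{snil}) and $B$ dihedral/symmetric/alternating, and then combine Lemma \ref{ni} with Lemma \ref{snil} to transfer the vanishing of $\mathrm{Hom}(\mathrm{SL}_n(\mathbb{Z}),\mathrm{Out}(A))$ and $\mathrm{Hom}(\mathrm{SL}_n(\mathbb{Z}),\mathrm{Out}(B))$ to $\mathrm{Hom}(\mathrm{SL}_n(\mathbb{Z}),\mathrm{Out}(\Phi))=1$. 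The main obstacle is the bookkeeping in the $r=5$ case: one has to run through the Cid--Schulz table and verify that each non-standard $\Phi$ admits such a decomposition or is otherwise covered by the earlier corollary/lemma. No new conceptual input is needed beyond the building blocks already developed in Sections 3--5, together with the perfectness of $\mathrm{SL}_n(\mathbb{Z})$.
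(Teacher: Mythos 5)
Your proposal follows the paper's proof essentially verbatim: reduce via Theorem \ref{th3} to showing $\mathrm{Hom}(\mathrm{SL}_n(\mathbb{Z}),\mathrm{Out}(\Phi))=1$, invoke the classifications of holonomy groups in dimensions $\leq 5$ (Wolf, Brown et al., Cid--Schulz/Szczepa\'{n}ski), dispose of the abelian cases by Corollary \ref{app1} and the dihedral/symmetric/alternating cases by Lemma \ref{di}, and handle the remaining dimension-$5$ groups (the direct products and $(\mathbb{Z}/2\times\mathbb{Z}/2)\rtimes\mathbb{Z}/4$) by combining Lemma \ref{snil} with Lemma \ref{ni}. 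This matches the paper's argument in both structure and level of detail, so no further comparison is needed.
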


\begin{proof}
The proof depends on the classifications of low-dimensional holonomy groups $%
\Phi $. When $r\leq 3,$ $\Phi =\{1\},\mathbb{Z}/2,\mathbb{Z}/3,\mathbb{Z}/4,%
\mathbb{Z}/6$ or $\mathbb{Z}/2\times \mathbb{Z}/2$ (cf. \cite{wo} Corollary
3.5.6, p.118). They are all abelian. By Lemma \ref{app1}, Conjecture \ref%
{conj} is true. When $r=4,$ the nonabelian $\Phi =D_{6},D_{8},D_{12}$ or $%
A_{4}$ (cf. \cite{bro}). By Lemma \ref{di} and Theorem \ref{th3}, Conjecture %
\ref{conj} is true. When $r=5,$ the nonabelian $\Phi
=D_{6},D_{8},D_{12},D_{8}\times \mathbb{Z}/2,D_{6}\times \mathbb{Z}%
/3,D_{12}\times \mathbb{Z}/2,A_{4},A_{4}\times \mathbb{Z}/2,A_{4}\times 
\mathbb{Z}/2\times \mathbb{Z}/2,S_{4}$ or $(\mathbb{Z}/2\times \mathbb{Z}%
/2)\rtimes \mathbb{Z}/4$ (cf. \cite{sz}, Theorem 1, or \cite{cs}, Theorem
4.2). By Lemma \ref{di}, it's enough to consider the $\Phi $ with two
factors. By Lemma \ref{snil} and Lemma \ref{ni}, $\mathrm{Hom}(\mathrm{SL}%
_{n}(\mathbb{Z}),\mathrm{Out}(\Phi ))=1$ and thus Conjecture \ref{conj}
holds by Theorem \ref{th3}.
\end{proof}

\bigskip

\bigskip

Department of Mathematical Sciences, Xi'an Jiaotong-Liverpool University,
111 Ren Ai Road, Suzhou, Jiangsu 215123, China.

E-mail: Shengkui.Ye@xjtlu.edu.cn

\end{document}